\newtheorem{theorem}{Theorem}[section]
\newtheorem{corollary}[theorem]{Corollary}
\newtheorem{lemma}[theorem]{Lemma}
\newtheorem{remark}[theorem]{Remark}
\newtheorem{proposition}[theorem]{Proposition}
\newtheorem{definition}[theorem]{Definition}
\newtheorem{example}{Example}[section]
\newtheorem{algorithm}{Algorithm}[section]
\newtheorem{remarks}[theorem]{Remarks}
\newtheorem{conditions}[theorem]{Conditions}
\newcommand{\tab}[0]{\hspace{12pt}}
\newlength{\cellsize}
\newcommand\tableau[1]{
\vcenter{
\let\\=\cr
\baselineskip=-16000pt
\lineskiplimit=16000pt
\lineskip=0pt
\halign{&\tableaucell{##}\cr#1\crcr}}}
\newcommand{\tableaucell}[1]{{%
\def \arg{#1}\def \void{}%
\ifx \void \arg
\vbox to \cellsize{\vfil \hrule width \cellsize height 0pt}%
\else
\unitlength=\cellsize
\begin{picture}(1,1)
\put(0,0){\makebox(1,1){$#1$}}
\put(0,0){\line(1,0){1}}
\put(0,1){\line(1,0){1}}
\put(0,0){\line(0,1){1}}
\put(1,0){\line(0,1){1}}
\end{picture}%
\fi}}
\begin{document}

\title{On Combinatorial Models for Affine Crystals}

\author[C.~Briggs]{Carly Briggs}
\address[Carly Briggs]{Bennington College, One College Drive, Bennington, VT 05201, U.S.A.}
\email{carlybriggs@bennington.edu}

\author[C.~Lenart]{Cristian Lenart}
\address[Cristian Lenart]{Department of Mathematics and Statistics, State University of New York at Albany, 
Albany, NY 12222, U.S.A.}
\email{clenart@albany.edu}

\author[A.~Schultze]{Adam Schultze}
\address[Adam Schultze]{Department of Mathematics, Statistics, and Computer Science, St. Olaf College, 1520 St. Olaf Avenue, 
Northfield, MN 55057, U.S.A.}
\email{schult24@stolaf.edu}

\keywords{Kirillov-Reshetikhin crystal, Kashiwara-Nakashima column, quantum alcove model.}
\subjclass[2010]{Primary 05E10; Secondary 20G42.}

\maketitle

\begin{abstract}
The tableau model for Kirillov-Reshetikhin (KR) crystals, which are finite dimensional crystals corresponding to certain affine Lie algebras, is commonly used for its ease of crystal operator calculations.  However, its simplicity makes quite complex the calculation of statistics such as: keys (used to express Demazure characters), the crystal energy function (an affine grading on tensor products of KR crystals), and the combinatorial $R$-matrix (an affine crystal isomorphism permuting factors in a tensor product of KR crystals).  It has been shown that these calculations are much simpler with the added structure in the quantum alcove model for KR crystals. In this paper, we give an explicit description of the crystal isomorphism between the mentioned realizations of KR crystals in all classical Lie types.
\end{abstract}

%
%

\section{Introduction}

Kashiwara's crystals encode the structure of certain bases, called crystal bases \cite{Kashiwara 1991}, for highest weight representations of quantum groups $U_q(\mathfrak{g})$ as $q$ goes to zero \cite{Hong and Kang}.  In this paper, we will focus on crystals corresponding to representations of both simple and affine Lie algebras \cite{carter}. As combinatorial objects, they can be visualized as directed graphs, whose edges are given by Kashiwara operators which are derived from the Chevalley generators of the corresponding quantum group \cite{Bump and Schilling}.  The resulting combinatorics provides much insight into the original representations.  For instance, for irreducible representations of simple Lie algebras, by developing a tensor product rule for the Kashiwara operators, one can visualize the decomposition of the tensor product of such representations into irreducible components \cite{Hong and Kang}.

Given a dominant weight $\lambda$ for a simple Lie algebra, there is a corresponding connected crystal graph, $B(\lambda)$, whose vertices can be realized as semistandard Young tableaux (certain fillings of Young diagrams \cite{Fulton}). 
We can also consider crystals for affine Lie algebras. Here, we will focus on Kirillov-Reshetikhin (KR) crystals, which are finite affine connected crystals, but can be decomposed into disjoint unions of classical crystals upon the removal of the affine edges \cite{Kirillov Reshetikhin 1990}.  These crystals are indexed by $r\times s$ rectangles and are denoted $B^{r,s}$.  There are type specific models for KR crystals, like that given by Kashiwara-Nakashima (KN) tableaux \cite{FOS}, as well as type independent models such as the quantum alcove model, which works uniformly in all untwisted affine types \cite{Lenart Lubovsky 2015b,LNSSS 2016,LNSSS III}.
 We provide an explicit crystal isomorphism between the two models of the above mentioned KR crystals: the tableau model and the quantum alcove model.

 Lenart and Postnikov defined the so-called alcove model for highest weight crystals associated to a semisimple Lie algebra $\mathfrak{g}$. In fact, the model was defined more generally, for symmetrizable Kac-Moody algebras $\mathfrak{g}$  \cite{Lenart Postnikov 2007, Lenart Postnikov 2008}.  The alcove model is a discrete couterpart of the Littelmann path model. 

Lenart and Lubovsky then generalized the alcove model to one for Kirillov-Reshetikhin (KR) crystals of affine Lie algebras; this is known as the quantum alcove model \cite{Lenart Lubovsky 2015b}, as it is based on the quantum Bruhat graph. This graph first appeared in connection with the quantum cohomology of flag varieties of the corresponding finite Weyl group \cite{Fulton Woodward 2004}.  The path enumeration is determined by the choice of a certain sequence of alcoves, called an alcove path, like in the classical alcove model.  If we restrict to paths in the usual Bruhat graph, we recover the classical alcove model. The mentioned paths in the quantum Bruhat graph first appeared in \cite{Lenart 2012}, where they index the terms in the specialization to $t=0$ of the Ram-Yip formula \cite{Ram Yip 2011} for Macdonald polynomials $P_{\lambda}(X;q,t)$. 
Further, \cite{Lenart Lubovsky 2015b} defined crystal operators for the quantum alcove model, both the classical ones $f_i$, $i>0$, and the affine operator $f_0$.  It was shown in \cite{LNSSS 2016} that the quantum alcove model uniformly describes tensor products of column shape KR crystals for all untwisted affine types. An explicit crystal isomorphism was given in \cite{Lenart Lubovsky 2015b} for types $A$ and $C$ between the objects of the quantum alcove model and tensor products of Kashiwara-Nakashima (KN) columns \cite{Kashiwara Nakashima 1994}, using the bijections constructed in \cite{Lenart 2012}.

 
In recent years, many applications have resulted from the quantum alcove model.  In \cite{LNSSS 2016} it was shown that the so-called height statistic in the Ram-Yip formula mentioned above expresses the energy function on a tensor product of KR crystals, which endows it with an affine grading.  The translation of this energy function to the tableau model in type $C$ was then given in \cite{Lenart Schilling 2011}. On the other hand, extending Sch\"{u}tzenberger's \textit{jeu de taquin} on Young tableaux to the quantum alcove model  (which is based on so-called \textit{quantum Yang-Baxter moves}) results in a realization of the \textit{combinatorial $R$-matrix} \cite{Lenart Lubovsky 2015a}.  Further, the quantum alcove model was used to determine \textit{keys}, also known as \textit{initial direction} in the (quantum) LS path model. These detect Demazure crystals inside highest weight ones (in the alcove model), and Demazure-type crystals inside tensor products of KR crystals (in the quantum alcove model), see~\cite{LNSSS III}.  The computation of these keys is easy in the (quantum) alcove model and the bijection given in this paper then transfers the key computation from the (quantum) alcove model to the tableau model, where algorithms are only known for types $A$ and $C$ \cite{Santos, Santos2, Sheats}. 
      
While the tableau model is simpler, it has less easily accessible information, so it is generally hard to use in specific computations (like those of the energy function, the combinatorial $R$-matrix and keys, as described above). As these computations are much simpler in the quantum alcove model, an alternative is to relate them to the tableau model, via an explicit affine crystal isomorphism. As was mentioned above, this has been done in types $A$ and $C$. Here we extend this work to types $B$ and $D$, where the corresponding bijection is much more involved, and has important additional features.  

\subsection*{Acknowledgements.} 
C.B. was partially supported by the NSF grant DMS-1101264.
C.L. was partially supported by the NSF grants DMS-1362627 and DMS-1855592.
A.S. was partially supported by the NSF grant DMS-1362627 and the Chateaubriand Fellowship from the Embassy of France in the United States.

\section{Background}
\subsection{Root Systems}\label{section root systems}
 \cite{Humphreys} Let $\mathfrak{g}$ be a complex semisimple Lie algebra and $\mathfrak{h}$ a Cartan subalgebra, whose rank is $n$. 
  Let $\Phi\subset \mathfrak{h}^*$ be the corresponding irreducible 
  \textit{root system},
   $\mathfrak{h}^*_{\mathbb{R}}$ 
   the real span of the roots, and $\Phi^{+}\subset\Phi$ the set of positive roots. 

Let $\rho:=\frac{1}{2}(\sum_{\alpha\in\Phi^+}\alpha)$.
  Let $\alpha_1,\hdots,\alpha_n\in\Phi^+$ be the corresponding \textit{simple roots}. We denote $\langle\cdot,\cdot\rangle$ the nondegenerate scalar product on $\mathfrak{h}^*_{\mathbb{R}}$ induced by the Killing form.  Given a root $\alpha$, we consider the corresponding \textit{coroot} $\alpha^{\vee}:=2\alpha / \langle\alpha,\alpha\rangle$ and reflection $s_{\alpha}$.

Let $W$ be the corresponding \textit{Weyl group}, whose Coxeter generators are denoted, as usual, by $s_i:=s_{\alpha_i}$.  The length function on $W$ is denoted by $l(\cdot)$.  the \textit{Bruhat order} on $W$ is defined by its covers $w\lessdot ws_{\alpha},$ for $\alpha\in\Phi^+$, if $l(ws_\alpha) = l(w)+1$.  The mentioned covers correspond to the labeled directed edges of the \textit{Bruhat graph} on $W:$ \[w\xrightarrow{\alpha} ws_\alpha \hspace{8pt}\text{for} \hspace{8pt}w\lessdot ws_{\alpha}.\]\label{bruhat order graph eq}

The \textit{weight lattice} $\Lambda$ is given by \[\Lambda = \{\lambda\in\mathfrak{h}^*_{\mathbb{R}}: \langle\lambda,\alpha\rangle\in\mathbb{Z}\hspace{8pt}\text{for}\hspace{6pt}\text{any}\hspace{6pt}\alpha\in\Phi^+\}.\]

The weight lattice $\Lambda$ is generated by the \textit{fundamental weights} $\omega_1,\hdots,\omega_n$, which form the dual basis to the basis of simple coroots, i.e., $\langle\omega_i,\alpha^{\vee}\rangle=\delta_{i,j}$.  The set $\Lambda^+$ of \textit{dominant weights} is given by $$\Lambda^+:=\{\lambda\in\Lambda:\langle\lambda,\alpha^{\vee}\rangle\geq 0 \hspace{8pt}\text{for}\hspace{4pt}\text{any}\hspace{8pt}\alpha\in\Phi^+\}.$$

Given $\alpha\in\Phi$ and $k\in\mathbb{Z}$, we denote by $s_{\alpha,k}$ the reflection in the affine hyperplane $$H_{\alpha,k}:=\{\lambda\in\mathfrak{h}^*_{\mathbb{R}} : \langle\lambda,\alpha^{\vee}\rangle = k\}.$$ \label{affine hyperplane eq}

These reflections generate the \textit{affine Weyl Group} $W_{\text{aff}}$ for the \textit{dual root system} $\Phi^{\vee}:=\{\alpha^{\vee}|\alpha\in\Phi\}$. The hyperplanes $H_{\alpha,k}$ divide the real vector space $\mathfrak{h}^*_{\mathbb{R}}$ into open regions, called \textit{alcoves}.  The \textit{fundamental alcove} $A_{\circ}$ is given by 

$$A_{\circ}:= \{\lambda\in\mathfrak{h}^*_{\mathbb{R}} | 0<\langle\lambda,\alpha^{\vee}\rangle<1 \hspace{6pt}\text{for}\hspace{4pt}\text{all}\hspace{6pt}\alpha\in\Phi^+\}$$\label{Fund Alcove eq}

Define $w\triangleleft ws_{\alpha}$, for $\alpha\in\Phi^+$, if $l(ws_{\alpha}) = l(w) - 2\langle\rho,\alpha^{\vee}\rangle + 1$. These are the upward facing arrows in Figure~\ref{qbruhat graph}. The \textit{quantum Bruhat graph} \cite{Fulton Woodward 2004} is defined by adding to the Bruhat graph the following edges (the downward facing arrows in Figure~\ref{qbruhat graph}) labeled by positive roots $\alpha$:
$$w\xrightarrow{\alpha}ws_{\alpha}\hspace{8pt}\text{for}\hspace{6pt} w\triangleleft ws_{\alpha}.$$\label{quantum bruhat graph eq}

\begin{figure}
\centering
\includegraphics[scale=.35]{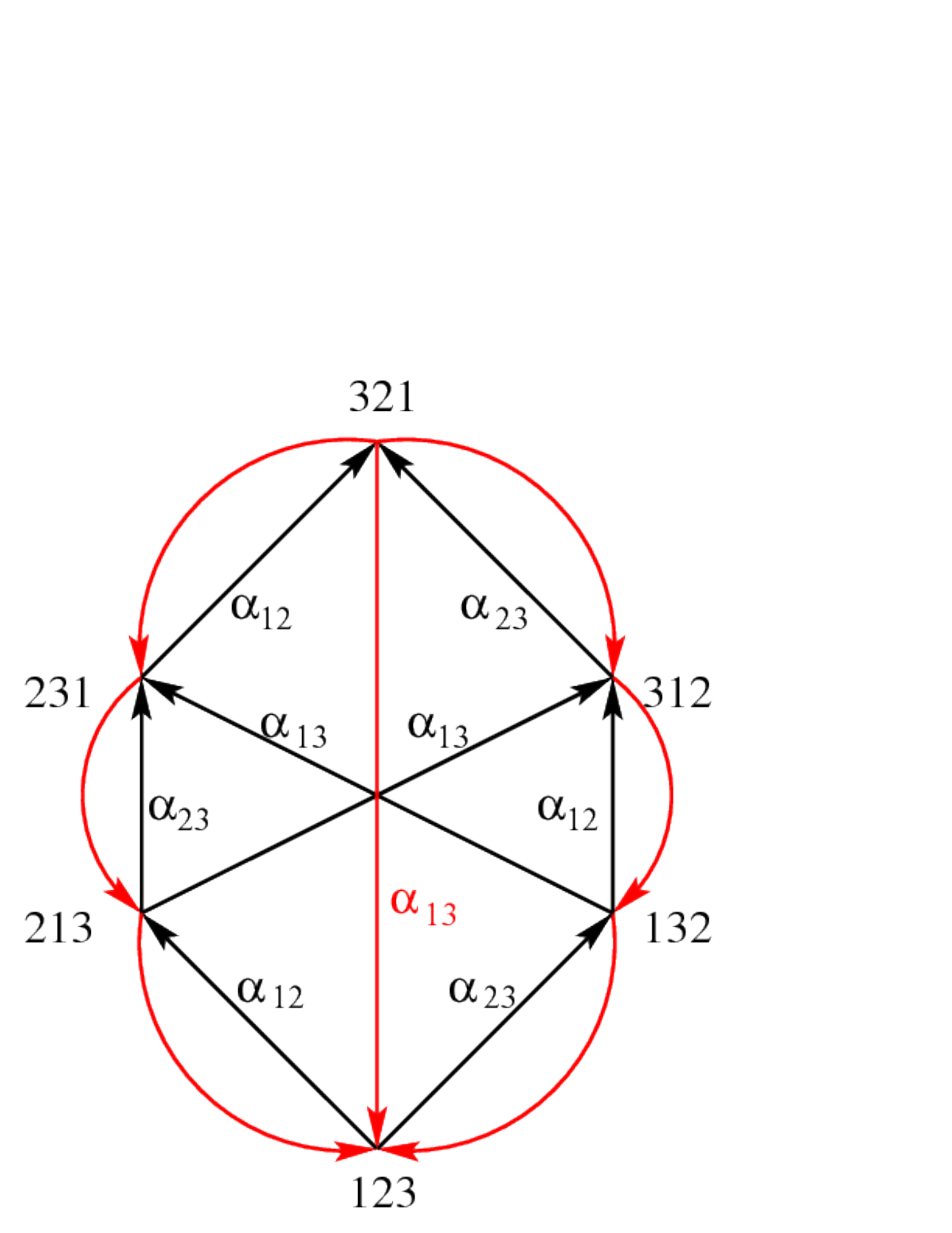}
\caption{The quantum Bruhat graph for the Weyl group of type $A_3$}\label{qbruhat graph}
\end{figure}

\subsection{Kirillov-Reshetikhin (KR) crystals}\label{KR section}
 
  Given a simple or affine Lie algebra $\mathfrak{g}$,  we define the corresponding \textit{Kashiwara Crystal} to be the basis resulting from taking the limit of the quantum group  $U_q(\mathfrak{g})$ as $q$ goes to zero.  Such bases can be given the structure of a colored oriented graph, which we call a \textit{Crystal Graph}, whose arrows are defined by the \textit{Kashiwara operators} which are related to the Chevalley generators \cite{Hong and Kang}.  We will now detail the resulting combinatorial structure of such crystal graphs.  Our definitions closely follow those given in \cite{Bump and Schilling}.
    
 \begin{definition}\label{combinatorial def of crystal}
 \end{definition}   Let $\Phi$ be the root system and $\Lambda$ the weight lattice, with simple roots $\alpha_i$ for $i$ in an indexing set $I$, associated to a Lie algebra $\mathfrak{g}$.  The \textit{crystal} of type $\Phi$ is then a nonempty set $B$ along with the following maps using $i\in I$ and an auxiliary element $0\notin B$:
    \begin{enumerate}
    \item the \textit{Kashiwara Operators} $e_i, f_i: B\rightarrow B \sqcup {0},$ with the conditions that if $x,y\in B$ then $e_i(x)=y$ if and only if $f_i(y) = x$.
    \item the $e_i$-$string$ and $f_i$-$string$ operators $\varepsilon_i, \varphi_i : B\rightarrow \mathbb{Z} \sqcup \{-\infty\},$ with the condition that if $x,y\in B$ are such that $e_i(x) = y$, then $\varepsilon_i(y) = \varepsilon(x) -1$ and $\varphi_i(y) = \varphi_i(x)+1$. In the case that $\varphi_i(x) = -\infty$ or $\varepsilon_i(x) = -\infty$, we require that $e_i(x) = f_i(x) = 0$.
    \item the \textit{weight} map $wt: B\rightarrow \Lambda,$ where if $x,y\in B$ are such that $e_i(x) = y$, then $wt(y) = wt(x) + \alpha_i$.  Further, for each $x\in B$ and $i\in I$, we have that $ \langle wt(x),\alpha_i^{\vee}\rangle = \varphi_i(x) - \varepsilon_i(x).$
    \end{enumerate}
    
The associated \textit{crystal graph} is then built with elements of $B$ as vertices and, for $x,y\in B$, we draw an edge $y\xrightarrow{i} x$ exactly when $f_i(y) = x$.

\vspace{12pt} Given two $\mathfrak{g}$-crystals $B_1$ and $B_2$, we define their tensor product $B_1\otimes B_2$ as follows.  As a set, $B_1\otimes B_2$ is the Cartesian product of the two sets. For $b=b_1\otimes b_2\in B_1\otimes B_2$, the weight function is simply $wt(b) = wt(b_1)+wt(b_2)$. The crystal operator $f_i$ is given by 
 
 $$f_i(b_1\otimes b_2) = \begin{cases}
     f_i(b_1)\otimes b_2 & \text{if}\hspace{5pt} \epsilon_i(b_1)\geq \varphi_i(b_2),  \\
     b_1\otimes f_i(b_2) & \text{otherwise,}
  \end{cases}$$
  
  while $e_i(b)$ is defined similarly.  
  
  \vspace{12pt} The \textit{highest weight crystal} $B(\lambda)$ of highest weight $\lambda\in\Lambda^+$ is a certain crystal with a unique element $\mu_{\lambda}$ such that $e_i(\mu_{\lambda}) = \textbf{0}$ for all $i\in I$ and $wt(\mu_{\lambda}) = \lambda$.  It encodes the structure of the crystal basis of the $U_q(\mathfrak{g})$-irreducible representation with highest weight $\lambda$ as $q$ goes to $0$ \cite{Bump and Schilling}.

 \vspace{12pt} A \textit{Kirillov-Reshetikhin (KR) crystal} \cite{Kirillov Reshetikhin 1990} is a finite crystal $B^{r,s}$ for an affine algebra, associated to a rectangle of height $r$ and length $s$.  We now describe the KR crystals $B^{r,1}$ for type $A_{n-1}^{(1)}$ (where $r\in\{1,2,\hdots,n-1\}$) as well as for types $B_n^{(1)}, C_n^{(1)},$ and $D_n^{(1)}$ (where $r\in \{1,2,\hdots,n\}$). As a classical type crystal (i.e. with the removal of the $\tilde{f_0}$ arrows) in types $A_{n-1}^{(1)}$ and $C_n^{(1)}$, we have that the KR crystal 
$B^{r,1}$ is isomorphic to the corresponding highest weight crystal $B(\omega_r)$. Similarly, in types $C_n^{(1)}$ and $D_n^{(1)}$, we have that the KR crystal $B^{r,1}$, as a classical type crystal, is isomorphic to the crystal $B(\omega_r) \sqcup B(\omega_{r-2}) \sqcup B(\omega_{r-4})\sqcup\hdots$
where each $B(\omega_k)$ is given by Kashiwara-Nakashima (KN) columns of height $k$ of the corresponding type.

\begin{definition}\label{def KN column}
  {\rm Kashiwara-Nakashima (KN) columns} of height $k$ are strictly increasing fillings of length $k$ columns with entries
 $\{1<2<\hdots <n\}$ in type $A_{n-1}$ and  with entries $\{1<\hdots <n<\overline{n} <\hdots <\overline{1}\}$
  in type $C_n$, $B_n$, and $D_n$ along  
  with the additional conditions:

\begin{enumerate}
\item The entries are strictly increasing from the top to bottom with the exception that:
\begin{enumerate}
\item the letter $0$ (ordered between $n$ and $\overline{n}$) can appear in type $B_n$ and can be repeated, and
\item the letters $n$ and $\overline{n}$ in type $D_n$ can alternate.
\end{enumerate}
\item If both letters $j$ and $\overline{\jmath}$ appear in the column, and $j$ is in the $a$-th box from the top and $\overline{\jmath}$ lies in the $b$-th box from the bottom, then $a+b\leq j$.
\end{enumerate}
\end{definition}

 \begin{example}
 A column of height 5 of type $B_6$

$$
\begin{array}{l}\tableau{{2}\\{3}\\{0}\\{0}\\{\overline{2}}} \end{array}
$$
 \end{example}
 
 For our purposes here, we would like to consider fillings of partition shapes, rather than just rectangular.  We define the associated crystal as follows.
  
 \begin{definition} For a partition $\textbf{p} = (p_1\geq p_2\geq\hdots\geq p_r)$, we define $$B^{\textbf{p}}:= B^{p_1,1}\otimes B^{p_2,1}\otimes\hdots\otimes B^{p_r,1}.$$
 \end{definition}
  
  \subsection{The quantum alcove model}
  We say that two alcoves are \textit{adjacent} if they are distinct and have a common wall.  Given a pair of adjacent alcoves $A$ and $B$, we write $A\xrightarrow{\beta} B$ if the common wall is of the form $H_{\beta,k}$ and the root $\beta\in\Phi$ points in the direction from $A$ to $B$.
  \begin{definition}{\rm \cite{Lenart Postnikov 2007}}\label{def alcove path}
  
  An {\rm alcove path} is a sequence of alcoves $(A_0,A_1,\hdots,A_m)$ such that $A_{j-1}$ and $A_j$ are adjacent, for $j=1,\hdots,m$.  We say that an alcove path is {\rm reduced} if it has minimal length among all alcove paths from $A_0$ to $A_m$.  
  \end{definition}
  
   \begin{definition}{\rm \cite{Lenart Postnikov 2007}}\label{def alcove lambda-chain}
  
  The sequence of roots $(\beta_1,\beta_2,\hdots,\beta_m)$ is called a {\rm $\lambda$-chain} if $$A_\circ = A_0\xrightarrow{-\beta_1} A_1\xrightarrow{-\beta_2} \hdots \xrightarrow{-\beta_m} A_m = A_{-\lambda}$$ is a reduced alcove path.
  \end{definition}
 
 We now fix the dominant weight $\lambda=\omega_{p_1}+\hdots + \omega_{p_r}$ and an alcove path $\Pi = (A_0,\hdots,A_m)$ from $A_0 = A_\circ$ to $A_m = A_{-\lambda}$. Note that $\Pi$ is determined by the corresponding $\lambda$-chain of positive roots $\Gamma:=(\beta_1,\hdots,\beta_m)$.  We let $r_i:=s_{\beta_i}$ and $J = \{j_1,j_2,\hdots,j_s\}\subseteq [m]$.  The elements of $J$ are called \textit{folding positions}.  We fold $\Pi$ in the hyperplanes corresponding to these positions and obtain a folded path.  Like $\Pi$, the folded path can be recorded by a sequence of roots, namely $\Delta = \Gamma(J) = (\gamma_1,\gamma_2,\hdots,\gamma_m)$; here $\gamma_k = r_{j_1}r_{j_2}\hdots,r_{j_p}(\beta_k)$, with $j_p$ the largest folding position less than $k$.  Given $i\in J$, we say that $i$ is a \textit{positive folding position} if $\gamma_i >0$, and a \textit{negative folding position} if $\gamma_i < 0 $.  We denote the positive folding positions by $J^+$ and the negative ones by $J^-$.  
 
 \begin{definition}{\rm \cite{Lenart Lubovsky 2015b}}\label{admissible subset}
 
 A subset $J = \{j_1<j_2<\hdots <j_s\}\subseteq [m]$ (possibly empty) is an {\rm admissible subset} if we have the following path in the quantum Bruhat graph on $W$:
 $$1\xrightarrow{\beta_{j_1}}r_{j_1}\xrightarrow{\beta_{j_2}}r_{j_1}r_{j_2}\xrightarrow{\beta_{j_3}}\hdots\xrightarrow{\beta_{j_s}}r_{j_1}r_{j_2}\hdots r_{j_s}.$$ We call $\Delta = \Gamma(J)$ an \textit{admissible folding}. Let $\mathcal{A} = \mathcal{A}(\mu)$ be the collection of all admissible subsets.
 \end{definition}
 
 See Example~\ref{Type A example} for an example of a $\Gamma$-chain and an admissible subset. 

\begin{theorem}{\rm \cite{LNSSS 2016}} The collection of all admissible subsets $\mathcal{A}(\lambda)$ is a combinatorial model for $B^{\textbf{p}}$.
\end{theorem}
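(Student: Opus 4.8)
The plan is to put an affine crystal structure on the set $\mathcal{A}(\lambda)$ of admissible subsets and then identify the resulting crystal with $B^{\mathbf{p}}$. For the first step I would use the crystal operators $e_i, f_i$ (for $i \in \{0,1,\ldots,n\}$) defined in \cite{Lenart Lubovsky 2015b}: given an admissible subset $J$ with folding $\Delta = \Gamma(J) = (\gamma_1,\ldots,\gamma_m)$, one records at each position $k$ with $\gamma_k = \pm\alpha_i$ a sign, reads off an $i$-signature, and lets $f_i$ flip the folding at the position selected by the usual bracketing rule, refolding the tail accordingly; the weight $wt(J)$ is read off as the endpoint of the folded path. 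First I would check that these operators are well defined, namely that they send admissible subsets to admissible subsets (or to $\mathbf{0}$) and satisfy the axioms of Definition~\ref{combinatorial def of crystal}. This reduces to local statements about the quantum Bruhat graph, since flipping a folding position alters one edge of the defining path in Definition~\ref{admissible subset}, and one must confirm the modified path still lies in the graph; the length relations built into the Bruhat and quantum covers control exactly this.

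The core of the argument is the identification with $B^{\mathbf{p}}$. Since $\lambda = \omega_{p_1}+\cdots+\omega_{p_r}$, I would choose the $\lambda$-chain $\Gamma$ as a concatenation $\Gamma_1 * \cdots * \Gamma_r$ of $\omega_{p_k}$-chains, so that each admissible subset decomposes into a tuple of pieces, one per factor, mirroring the tensor decomposition $B^{p_1,1}\otimes\cdots\otimes B^{p_r,1}$. The cleanest route to the isomorphism then passes through the quantum Lakshmibai--Seshadri (LS) path model: there is a natural forgetful map sending an admissible subset to a quantum LS path, retaining only the sequence of Weyl group elements $r_{j_1}\cdots r_{j_k}$ visited by the admissible path together with the rational points where it jumps, and discarding the fine alcove data. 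I would prove this map is a bijection from $\mathcal{A}(\lambda)$ onto the quantum LS paths of shape $\lambda$ and that it intertwines all crystal operators $e_i, f_i$, including $f_0$. Granting this, the statement follows from the known realization of $\bigotimes_k B^{p_k,1}$ by quantum LS paths, giving $\mathcal{A}(\lambda) \cong B^{\mathbf{p}}$ as affine crystals.

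The hard part will be the affine compatibility together with correctly capturing the quantum edges. Even the classical crystal $\mathcal{A}(\lambda)$ with $f_0$ removed is reducible and strictly larger than the irreducible $B(\lambda)$ produced by the ordinary (non-quantum) alcove model of \cite{Lenart Postnikov 2007}: the extra components $B(\mu)$ with $\mu < \lambda$ arise precisely from admissible subsets traversing quantum (down) edges, and they must match the reducible decomposition $B(\omega_{p_k}) \sqcup B(\omega_{p_k-2}) \sqcup \cdots$ of each tensor factor. I would establish this classical isomorphism by identifying the classical highest weight elements (those annihilated by every $e_i$, $i>0$) and comparing characters, the character of $\mathcal{A}(\lambda)$ being supplied by the $t=0$ specialization of the Ram-Yip formula. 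The genuinely delicate point is then the affine operator $f_0$: it depends on the affine wall $H_{\theta,-1}$ for the highest root $\theta$ and is the data that rigidifies the classical crystal into the correct Kirillov-Reshetikhin tensor product. Verifying that the combinatorial $f_0$ on foldings agrees with the representation-theoretic $f_0$ on $B^{\mathbf{p}}$, uniformly across types and compatibly with the type-specific features of the KN columns in types $B$ and $D$ (the repeated $0$'s and the alternating $n, \overline{n}$ of Definition~\ref{def KN column}), is where I expect the main difficulty to lie; controlling how folding at the affine wall commutes with the projection to quantum LS paths is the crux.
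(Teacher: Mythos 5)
This theorem is not actually proved in the paper you were given: it is quoted verbatim from \cite{LNSSS 2016}, so there is no internal proof to compare against. That said, your outline essentially reconstructs the strategy of the cited proof: one takes the crystal operators $e_i,f_i$ (including $f_0$) on admissible subsets as defined in \cite{Lenart Lubovsky 2015b}, shows that the forgetful map retaining only the sequence of Weyl group elements and jump points sends $\mathcal{A}(\lambda)$ bijectively onto the quantum LS paths of shape $\lambda$ and intertwines all crystal operators, and then invokes the identification of quantum LS paths with Naito--Sagaki's projected level-zero LS paths, which are known to realize $\bigotimes_k B^{p_k,1}$; your remark that the classical restriction of $\mathcal{A}(\lambda)$ is reducible, with the extra components coming exactly from quantum (down) edges, is also the right picture. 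One correction: your anticipated main difficulty --- compatibility of $f_0$ with the type-specific features of KN columns in types $B$ and $D$ --- is misplaced. The proof in \cite{LNSSS 2016} is uniform across untwisted affine types and never touches tableaux; the KR crystals $B^{p_k,1}$ enter there abstractly through level-zero representation theory, not through any tableau realization. Relating the quantum alcove model to KN columns is precisely the subject of the present paper (the maps $sfill\_B$, $sfill\_D$ and their inverses), not of the quoted theorem.
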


\section{The bijection in types $A_{n-1}$ and $C_n$}
\subsection{The quantum alcove model and filling map in type $A_{n-1}$}
We start with the basic facts about the root system for type $A_{n-1}$.  We can identify the space $\mathfrak{h}^*_{\mathbb{R}}$ with the quotient $V:=\mathbb{R}^n/\mathbb{R}(1,\ldots,1)$, where $\mathbb{R}(1,\ldots,1)$ denotes the subspace in $\mathbb{R}^n$ spanned by the vector $(1,\ldots,1)$.  Let $\varepsilon_1,\ldots,\varepsilon_n\in V$ be the images of the coordinate vectors in $\mathbb{R}^n$.  The root system is $\Phi = \{\alpha_{ij} := \varepsilon_i-\varepsilon_j : i\neq j, 1\leq i,j \leq n\}$. The simple roots are $\alpha_i = \alpha_{i,i+1}$, for $i = 1,\ldots,n-1$.  The weight lattice is $\Lambda = \mathbb{Z}^n/\mathbb{Z}(1,\ldots,1)$.  The fundamental weights are $\omega_i = \varepsilon_1 + \varepsilon_2 + \ldots + \varepsilon_i$, for $i = 1,2,\ldots,n-1$. A dominant weight $\lambda = \lambda_1\varepsilon_1 + \ldots + \lambda_{n-1}\varepsilon_{n-1}$ is identified with the partition $(\lambda_1\geq\lambda_2\geq\ldots\geq\lambda_{n-1}\geq\lambda_n=0)$ having at most $n-1$ parts.  Note that $\rho = (n-1,n-2,\ldots,0)$.  Considering the Young diagram of the dominant weight $\lambda$ as a concatenation of columns, whose heights are $\lambda'_1,\lambda'_2,\ldots,$ corresponds to expressing $\lambda$ as $\omega_{\lambda'_1}+\omega_{\lambda'_2}+\ldots$ (as usual, $\lambda'$ is the conjugate partition to $\lambda$).

The Weyl group $W$ is the symmetric group $S_n$, which acts on $V$ by permuting the coordinates $\varepsilon_1,\ldots\,\varepsilon_n$.  Permutations $w\in S_n$ are written in one-line notation $w = w(1)\ldots w(n)$.  For simplicity, we use the same notation $(i,j)$, with $1\leq i < j \leq n$, for the positive root $\alpha_{ij}$ and the reflection $s_{\alpha_{ij}}$, which is the transposition $t_{ij}$ of $i$ and $j$.

We now consider the specialization of the quantum alcove model to type $A_{n-1}$. For any $k = 1,\ldots, n-1$, we have the following $\omega_k$-chain, denoted by $\Gamma(k)$ {\rm \cite{Lenart Postnikov 2008}}:
\begin{equation*}
\begin{array}{lllll}
(&\!\!\!\!(k,k+1),&(k,k+2),&\ldots,&(k,n)\,,\\
&&&\ldots\\
&\!\!\!\!(2,k+1),&(2,k+2),&\ldots,&(2,n)\,,\\
&\!\!\!\!(1,k+1),&(1,k+2),&\ldots,&(1,n)\,\,)\,.
\end{array}
\end{equation*}

We construct a $\lambda$-chain $\Gamma = (\beta_1,\beta_2,\ldots,\beta_m)$ as the concatenation  $\Gamma := \Gamma_1\ldots \Gamma_{\lambda_1}$, where $\Gamma_i := \Gamma(\lambda'_i)$.  Let $J = \{j_1<\ldots <j_s\}$ be a set of folding positions in $\Gamma$, not necessarily admissible, and let $T$ be the corresponding list of roots of $\Gamma$.  The factorization of $\Gamma$ induces a factorization on $T$ as $T = T_1T_2 \ldots T_{\lambda_1}$.  We denote by $T_1\ldots T_i$ the permutation obtained by multiplying the transpositions in $T_1,\ldots,T_i$ considered from left to right.  For $w\in W$, written $w = w_1 w_2 \ldots w_n$, 
let $w[i,j] = w_i\ldots w_j$.  To each $J$ we can associate a filling of a Young diagram $\lambda$, as follows.

\begin{definition}\label{Filling Map}
 Let $\pi_i = \pi_i(T) := T_1\ldots T_i$.  We define the {\rm filling map}, which produces a filling of the Young diagram $\lambda$, by $fill\_A(J) = fill\_A(T) := C_1\ldots C_{\lambda_1}$, where $C_i := \pi_i[1,\lambda'_i].$
We define the {\rm sorted filling map} $sfill\_A(J)$ to be the composition $sort\circ \mbox{fill\_A}(J)$, where {\rm sort} reorders increasingly each column of $fill\_A(J)$.
\end{definition}

\begin{definition}\label{circle order}
Define a {\rm circular order} $\prec_i$ on $[n]$ starting at $i$, by 
$$i\prec_i i+1\prec_i\ldots\prec_i n\prec_i 1\prec_i\ldots\prec_i i-1.$$  
\end{definition}

It is convenient to think of this order in terms of the numbers $1,\ldots,n$ arranged on a circle clockwise.  We make that convention that, whenever we write $a\prec b\prec c\prec\ldots$, we refer to the circular order $\prec = \prec_a$. Below is a criterion for the quantum Bruhat graph on the Weyl group $W$, ${\rm{QBG}}(W)$, in type $A_{n-1}$ using these orders. 

\begin{proposition}{\rm \cite{Lenart 2012}}\label{Type A QB criterion}
 For $1\leq i<j\leq n$, we have an edge $w\xrightarrow{(i,j)} w(i,j)$ in ${\rm{QBG}}(W)$ if and only if there is no $k$ such that $i<k<j$ and $w(i)\prec w(k)\prec w(j)$.
\end{proposition}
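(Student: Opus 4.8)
The plan is to reduce the statement to the explicit length–change formula for right multiplication by a transposition in $S_n$, and then to match the two types of edges in the quantum Bruhat graph against the single circular-order condition. Recall that in type $A_{n-1}$ the reflection $s_{\alpha_{ij}}$ is the transposition $(i,j)$, that right multiplication $w\mapsto w(i,j)$ swaps the entries in positions $i$ and $j$ of the one-line notation, that $l(w)=\inv(w)$, and that $\langle\rho,\alpha_{ij}^\vee\rangle=j-i$. Since an edge $w\xrightarrow{(i,j)}w(i,j)$ in ${\rm QBG}(W)$ is by definition either a Bruhat cover (with $l(w(i,j))=l(w)+1$) or a quantum down-edge (with $l(w(i,j))=l(w)-2(j-i)+1$), everything will follow once I know $\inv(w(i,j))-\inv(w)$ precisely.

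First I would compute this length difference by inventorying which inversions are created or destroyed when positions $i$ and $j$ are transposed. Only pairs involving position $i$ or $j$ can change, and for each intermediate index $k$ with $i<k<j$ the pairs $\{i,k\}$ and $\{k,j\}$ must be examined against the three possible locations of $w(k)$ relative to $w(i)$ and $w(j)$. The bookkeeping yields: if $w(i)<w(j)$ then $\inv(w(i,j))-\inv(w)=1+2N$ with $N:=\#\{k:i<k<j,\ w(i)<w(k)<w(j)\}$; and if $w(i)>w(j)$ then $\inv(w(i,j))-\inv(w)=-(1+2N')$ with $N':=\#\{k:i<k<j,\ w(j)<w(k)<w(i)\}$.

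Next I would read off the two edge conditions. A Bruhat cover occurs exactly when $w(i)<w(j)$ and $N=0$; a quantum down-edge occurs exactly when $w(i)>w(j)$ and $1+2N'=2(j-i)-1$, i.e. $N'=j-i-1$, meaning every intermediate value $w(k)$ lies strictly between $w(j)$ and $w(i)$. In the remaining subcases the length change is neither $+1$ nor $-(2(j-i)-1)$, so no edge is present.

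Finally, and this is the crux, I would show that both conditions are packaged by the single requirement that no $k$ with $i<k<j$ satisfies $w(i)\prec w(k)\prec w(j)$ in the circular order $\prec=\prec_{w(i)}$. Since $w(i)$ is the minimum of $\prec_{w(i)}$ and each $w(k)$ is distinct from it, the relation $w(i)\prec w(k)\prec w(j)$ reduces to saying that $w(k)$ lies on the open clockwise arc from $w(i)$ to $w(j)$. When $w(i)<w(j)$ this arc is $\{x:w(i)<x<w(j)\}$, so the forbidden configuration is exactly $N\ge 1$; when $w(i)>w(j)$ the arc wraps around and equals $\{x:x>w(i)\ \text{or}\ x<w(j)\}$, so forbidding it is exactly the statement $N'=j-i-1$. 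Matching these against the edge conditions of the previous step completes the proof. The main obstacle I anticipate is precisely this uniform reformulation: the circular order must simultaneously encode the ``no intermediate value in between'' condition of a Bruhat cover and the complementary ``all intermediate values in between'' condition of a quantum edge, and getting the wrap-around arc and the strict inequalities right is where care is needed; the down-edge length bookkeeping in the second step is the other place to watch the signs.
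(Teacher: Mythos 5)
Your proposal is correct. Note that the paper does not prove this proposition at all: it is quoted from the cited reference (Lenart, \emph{J. Combin. Theory Ser. A} 119 (2012)), and your argument reconstructs essentially the standard proof given there — compute the inversion-count change $\pm(1+2N)$ under right multiplication by $(i,j)$, match $+1$ against Bruhat covers and $-(2\langle\rho,\alpha_{ij}^\vee\rangle-1)=-(2(j-i)-1)$ against quantum edges, and observe that the circular order $\prec_{w(i)}$ encodes the condition ``no intermediate value strictly between'' in the cover case and the wrap-around condition ``all intermediate values strictly between $w(j)$ and $w(i)$'' in the quantum case. All the key computations ($\langle\rho,\alpha_{ij}^\vee\rangle=j-i$, the two arcs of the circular order, and the exclusion of edges in the mixed subcases) check out.
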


\begin{example}\label{Type A example}
{\rm 
Consider the dominant weight $\lambda = 3\varepsilon_1 +2\varepsilon_2 = \omega_1+2\omega_2$  in the root system $A_2$, which corresponds to the Young diagram $\begin{array}{l} \tableau{{}&{}&{}\\ {}&{}} \end{array}$.  The corresponding $\lambda$-chain is $$\Gamma = \Gamma_1\Gamma_2\Gamma_3 = \Gamma(2)\Gamma(2)\Gamma(1)= \{\underline{(2,3)},\underline{(1,3)}|\underline{(2,3)},(1,3)|\underline{(1,2)},(1,3)\}\,.$$ 
Consider $J=\{1,2,3,5\}$, cf. the underlined roots, with 
$T = \{(2,3),(1,3)|(2,3)|(1,2)\}.$

We write the permutations in Definition~\ref{admissible subset} as broken columns.  Note that $J$ is admissible since, based on Proposition~\ref{Type A QB criterion}, we have
\begin{equation*}
\begin{array}{l} \tableau{{1}\\ {\textbf{2}}}\\ \\
\tableau{{\textbf{3}}} \end{array}
\begin{array}{l} \lessdot \end{array}
\begin{array}{l} \tableau{{\textbf{1}}\\ {3}}\\ \\
\tableau{{\textbf{2}}} \end{array}
\begin{array}{l} \lessdot \end{array}
\begin{array}{l} \tableau{{2}\\ {3}}\\ \\
\tableau{{1}} \end{array}
\:|\:
\begin{array}{l} \tableau{{{2}}\\ {\textbf{3}}}\\ \\
\tableau{{\textbf{1}}} \end{array}
\begin{array}{l} \triangleleft \end{array}
\begin{array}{l} \tableau{{2}\\ {1}}\\ \\
\tableau{{3}} \end{array}
\:|\:
\begin{array}{l} \tableau{{\textbf{2}}}\\ \\
\tableau{{\textbf{1}}\\ {3}} \end{array}
\begin{array}{l} \triangleleft \end{array}
\begin{array}{l} \tableau{{1}}\\ \\
\tableau{{2}\\ {3}} \end{array}
\:|\:
,
\end{equation*} 

where the symbols $\lessdot$ and $\triangleleft$ signify Bruhat coverings as given in Section~\ref{section root systems}.  By considering the top part of the last column in each segment, and by concatenating these columns left to right, we obtain $ fill\_A(J) = \begin{array}{l} \tableau{{2}&{2}&{1}\\ {3}&{1}} \end{array}$ and $ sfill\_A(J)= \begin{array}{l} \tableau{{2}&{1}&{1}\\ {3}&{2}} \end{array}$.
}\end{example}

\begin{theorem}{\rm \cite{Lenart 2012,Lenart Lubovsky 2015b}} The map ``$sfill\_A$'' is an affine crystal isomorphism between $\mathcal{A} (\lambda)$ and 
$B^{\lambda'}:=B^{\lambda_1',1}\otimes B^{\lambda_2',1}\otimes\ldots$.
\end{theorem}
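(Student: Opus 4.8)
The plan is to show that $sfill\_A$ is a weight-preserving map that commutes with every Kashiwara operator $e_i,f_i$ for $i\geq 0$, i.e.\ a strict morphism of affine crystals, and then to upgrade this to a bijection. The work splits into three independent tasks: (i) checking that $sfill\_A$ actually lands in $B^{\lambda'}$ and preserves weights; (ii) checking that it intertwines the classical operators $e_i,f_i$ for $i>0$; and (iii) checking that it intertwines the affine operators $e_0,f_0$. Once (i)--(iii) are in place, the bijectivity comes for free: $\mathcal{A}(\lambda)$ is connected (it is a model for the connected affine crystal $B^{\mathbf{p}}$ by the preceding theorem), so its image under the strict morphism $sfill\_A$ is a nonempty subcrystal of $B^{\lambda'}$ closed under all $e_i,f_i$; since $B^{\lambda'}$ is itself connected, this image is all of $B^{\lambda'}$, so $sfill\_A$ is surjective, and the equality of characters supplied by that same theorem then forces it to be a bijection.

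For task (i), observe that each column $C_i=\pi_i[1,\lambda'_i]$ of $fill\_A(J)$ is a prefix of a one-line permutation, so its entries are pairwise distinct; applying $sort$ therefore produces a strictly increasing column, i.e.\ a genuine type $A_{n-1}$ column, and hence $sfill\_A(J)\in B^{\lambda'}$. Weight preservation is then a direct computation: one matches the content (the multiset of entries) of $sfill\_A(J)$ against the alcove-model weight $wt(J)$ by tracking how each folding $r_{j_p}$ modifies the relevant prefixes $\pi_i[1,\lambda'_i]$, using the explicit shape of the $\omega_k$-chains $\Gamma(k)$ recorded above. I also note here that the empty subset $J=\varnothing$ gives $\pi_i=1$ for all $i$, so it is sent to the ground-state filling whose $i$-th column is $1,2,\dots,\lambda'_i$, the classical highest weight element.

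Task (ii) is the crux and the step I expect to be the main obstacle. The combinatorial operator $f_i$ on admissible subsets (Lenart--Lubovsky) selects a folding position by forming the sequence of heights $\langle\cdot,\alpha_i^\vee\rangle$ along the folded path and applying a bracketing rule, while $f_i$ on column-strict fillings is the $i/(i+1)$-signature rule on the reading word. The goal is to show these two bracketings agree under $sfill\_A$: the $\alpha_i$-height sequence read off the $\lambda$-chain should reduce, after cancellation, to the $i/(i+1)$-signature of the reading word of $sfill\_A(J)$, and the single box changed by $f_i$ in the alcove model should correspond to the single box changed on the tableau. The circular Bruhat criterion of Proposition~\ref{Type A QB criterion} is the essential tool, controlling column by column which occurrences of $i$ and $i+1$ are active. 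The genuine difficulty is the interaction with the $sort$ step: because $fill\_A$ is only column-distinct, not column-increasing, one must verify that folding-then-sorting agrees with sorting-then-applying-$f_i$, i.e.\ that the entry selected by the height rule is exactly the one that $sort$ places in the position predicted by the tableau signature. This bookkeeping, resting on Proposition~\ref{Type A QB criterion}, is where essentially all the combinatorial content lives.

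Finally, for task (iii) I would exploit the affine Dynkin automorphism $i\mapsto i+1 \pmod n$ of type $A_{n-1}^{(1)}$. On the tableau side $f_0$ is realized through Sch\"utzenberger promotion, and on the alcove side $f_0$ is defined from $\alpha_0=-\theta$ by the same height and bracketing recipe as the classical operators. I expect that $sfill\_A$ intertwines promotion on $B^{\lambda'}$ with the corresponding rotation of admissible subsets realizing this automorphism in the quantum alcove model, so that $f_0$-compatibility reduces to the $f_1$-compatibility already established in task (ii), conjugated by that rotation; alternatively one can verify $f_0$-compatibility directly by the argument of (ii) applied to $\alpha_0$. Combining (i)--(iii) with the connectedness and character argument of the first paragraph then yields that $sfill\_A$ is an affine crystal isomorphism.
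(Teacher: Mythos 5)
Your high-level architecture is internally consistent: a strict morphism of affine crystals whose source is connected (via the preceding theorem identifying $\mathcal{A}(\lambda)$ with $B^{\mathbf{p}}$), whose target $B^{\lambda'}$ is a connected affine crystal of the same finite cardinality, is forced to be an isomorphism once surjectivity is in hand. The problem is that the two steps carrying all of the mathematical content of the theorem are named but never executed. For task (ii) you state precisely what must be shown --- that the height/bracketing rule defining $f_i$ on admissible subsets reduces, under $sfill\_A$, to the $i/(i+1)$-signature rule on the reading word, and that the single folding position added in the alcove model corresponds to the single box changed in the tableau --- but you give no argument, and you yourself flag this as ``the main obstacle'' and the place ``where essentially all the combinatorial content lives.'' The delicate interaction you correctly isolate (folding-then-sorting versus sorting-then-applying-$f_i$) is exactly what remains unproved. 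Task (iii) is in worse shape: the ``rotation of admissible subsets realizing the Dynkin automorphism'' that you want to conjugate by is never constructed, and its existence and compatibility with $sfill\_A$ is itself a substantial result (in the literature it is tied to the quantum Yang--Baxter moves of Lenart--Lubovsky); ``I expect that\dots'' is not a proof, and the fallback of verifying $f_0$ ``directly by the argument of (ii)'' inherits the gap in (ii). You would also need to cite connectedness of a tensor product of KR crystals as an affine crystal, which is a nontrivial theorem, not a formality.

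It is also worth comparing routes. The paper (following Lenart 2012 and Lenart--Lubovsky 2015b) does not obtain bijectivity abstractly: it constructs the inverse map explicitly as ``$Path\_A\circ ord\_A$'', where the reorder algorithm undoes the sort via the circular orders and the path algorithm greedily rebuilds the admissible subset, with Proposition~\ref{Type A QB criterion} certifying that each step is an edge of the quantum Bruhat graph. This choice is not incidental: the entire point of the present paper is to extend these inverse algorithms (reorder and path, augmented by blocked-off avoidance) to types $B_n$ and $D_n$. So even if your tasks (ii) and (iii) were completed, your abstract surjectivity-plus-counting argument would prove the theorem but would not produce the algorithmic inverse on which the rest of the paper is built.
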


The proof of bijectivity is given in~\cite{Lenart 2012} by constructing an inverse map.  We will now present the algorithm for constructing this map, as the corresponding construction in the other classical types is based on this algorithm.

\subsection{The inverse map in type $A_{n-1}$}

Consider $B^{{\lambda'}}:= B^{\lambda_1',1}\otimes B^{\lambda_2',1}\otimes\ldots =  B(\omega_{\lambda_1'})\otimes B(\omega_{\lambda_2'})\otimes\ldots$.  This is simply the set of column-strict fillings of the Young diagram $\lambda$ with integers in $[n]$. Fix a filling $b$ in $B^{{\lambda'}}$ written as a concatenation of columns $b_1\ldots b_{\lambda_1}$.

The algorithm for mapping $b$ to a sequence of roots $S\subset \Gamma$ consists of two sub-algorithms, which we call the \textit{Reorder algorithm} (this reverses the ordering of columns $b_i$ from \textit{sort} back to that of the corresponding column in the $fill\_A$ map) and the \textit{Path algorithm} (this provides the corresponding path in the quantum Bruhat graph). 

The Reorder algorithm (Algorithm~\ref{Reorder algorithm}) takes $b$ as input and outputs a filling $ord\_A(b) = C$, a reordering of the column entries, based on the circle order given in Definition~\ref{circle order}.

\begin{algorithm}\label{Reorder algorithm}
(``ord\_A'')

 let $C_1:=b_1$;

 \hspace{8pt}for $i$ from $2$ to $\lambda_1$ do

 \hspace{16pt} for $j$ from $1$ to $\lambda'_i$ do

 \hspace{24pt} let $C_i(j):=min_{\prec_{C_{i-1}(j)}}(b_i\setminus \{C_i(1),\ldots,C_i(j-1)\})$

 \hspace{16pt} end do;

 \hspace{8pt} end do;

return $C:=C_1\ldots C_{\lambda_1}.$
\end{algorithm}

\begin{example}{\rm 
Algorithm~\ref{Reorder algorithm} gives the filling $C$ from $b$ below.

 $$b  = \tableau{{3}\\{5}\\{6}}  \tableau{{2}\\{3}\\{4}}\tableau{{1}\\{2}\\{4}}\tableau{{2}\\ \\ \\} \xrightarrow{ord\_A} \tableau{{3}\\{5}\\{6}}  \tableau{{3}\\{2}\\{4}}\tableau{{4}\\{2}\\{1}}\tableau{{2}\\ \\ \\} = C $$
}\end{example}

The path algorithm (Algorithm~\ref{Greedy algorithm}) takes the reordered filling $C$ and outputs a sequence of roots $Path\_A(C) = S\subset \Gamma$. Let $C_0$ be the increasing column filled with $1,2,\ldots,n$.

\begin{algorithm}\label{Greedy algorithm}
(``Path\_A'')
  
 for $i$ from $1$ to $\lambda_1$ do
 
 \hspace{12pt} let $S_i:=\emptyset$, $A := C_{i-1}$; 

 \hspace{12pt} for $(l,m)$ in $\Gamma_i$ do

 \hspace{24pt} if $A(l)\neq C_i(l)$ and $A(l)\prec A(m)\prec C_i(l)$ then let $S_i:=S_i,(l,m)$ and $A:=A(l,m)$;
 
 \hspace{24pt} end if;

 \hspace{12pt} end do;
 
 end do;

 return $S := S_1\ldots S_{\lambda_1}$.
\end{algorithm}

\begin{example}{\rm 
Consider $b=\tableau{{1}&{1}&{2}\\{3}&{2}&\\{4}&&}\in B^{(3,2,1)}$, where $\lambda=\lambda' = (3,2,1)$ and $n=4$.  We have  $$\Gamma = \Gamma(3)\Gamma(2)\Gamma(1) = \{(3,4),(2,4),(1,4)|(2,3),(2,4),(1,3),(1,4)|(1,2),(1,3),(1,4)\}.$$

Notice that $ord\_A(b) = b$, and that $Path\_A\circ ord\_A(b)$ outputs

 $S=S_1S_2S_3 = \{(3,4),(2,4)|(2,3),(2,4)|(1,2)\}$ via the following path in $\mbox{QBG}(W)$:
$$\begin{array}{l}\tableau{{1}\\{2}\\{ 3}} \\ \\ \tableau{{ 4}} \end{array} \!
\begin{array}{c} \\ \xrightarrow{(3,4)} \end{array}\! 
\begin{array}{l}\tableau{{ 1}\\{ 2}\\{ 4} \\ \\ {  3}} \end{array} \begin{array}{c} \\ {\xrightarrow{(2,4)}} 
\end{array}\! \begin{array}{l}\tableau{ {1}\\{ 3}}  \\ \tableau{{ 4}\\ \\{ 2}} 
\end{array}\!  \:|\:  \begin{array}{l}\tableau{{{ 1}}\\{{ 3}}} \\ \\ 
\tableau{{{4}}\\{ 2}}\end{array}\!\begin{array}{c} \\ \xrightarrow{(2,3)} 
\end{array}\!  \begin{array}{l}\tableau{{ 1}\\{ 4}}\\ \\ \tableau{{3}\\{ 2}}
\end{array} \begin{array}{c} \\ \xrightarrow{(2,4)} \end{array}\!  \begin{array}{l}
\tableau{{ 1}\\{ 2}}\\ \\ \tableau{{3}\\{4}}\end{array}  \:|\: 
\begin{array}{l}\tableau{{ 1}\\ \\ {{ 2}}\\{3}\\{4}} \end{array} 
\!\begin{array}{c} \\ \xrightarrow{(1,2)} \end{array}\!   \begin{array}{l} \tableau{{ 2}\\ \\{1}\\{3}} \\ \tableau{{4}}\end{array} \!  \, .$$
}\end{example}

\begin{theorem}{\rm \cite{Lenart 2012}}
If $\mbox{fill\_A}(T)=C$, then the output of the Greedy algorithm $C\mapsto S$ is such that $S = T$. Moreover,  the map $``Path\_A\circ ord\_A''$ is the inverse of ``$\mbox{sfill\_A}$''. 
\end{theorem}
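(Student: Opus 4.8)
The plan is to deduce the ``moreover'' statement from the first assertion together with a correctness statement for the Reorder algorithm, exploiting that the preceding theorem already gives that $sfill\_A$ is a bijection from $\mathcal{A}(\lambda)$ onto $B^{\lambda'}$. Since $sfill\_A = sort\circ fill\_A$ and a left inverse of a bijection is its two-sided inverse, it suffices to prove that $Path\_A\circ ord\_A$ is a left inverse, i.e. that $Path\_A(ord\_A(sfill\_A(J))) = T$ for the root list $T$ of every admissible $J$. I would split this into two claims: (i) the Reorder algorithm undoes the sort, $ord\_A(sort(fill\_A(J))) = fill\_A(J)$; and (ii) the Greedy algorithm recovers the root list, $Path\_A(fill\_A(T)) = T$, which is exactly the first assertion. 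I would isolate (ii) as the combinatorial heart and treat (i) afterwards.

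For (ii) I would induct on the number of columns, processing the $\lambda$-chain segment by segment. The first point to pin down is a \emph{tail-increasing invariant}: writing $\pi_i := T_1\cdots T_i$ and $C_i = \pi_i[1,\lambda_i']$, I claim $\pi_i(\lambda_i'+1) < \cdots < \pi_i(n)$ (this holds in both worked examples). Granting it, the reset $A := C_{i-1}$ at the top of the $i$-th pass of Algorithm~\ref{Greedy algorithm}, once $C_{i-1}$ is completed to a permutation by appending the missing values increasingly, reproduces precisely $\pi_{i-1}$; this is what lets the pass over $\Gamma_i$ act on the correct permutation even though the full permutation from the previous pass is discarded. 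I would establish this invariant simultaneously with the matching below, as an inductive consequence of how a single segment acts.

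The core is the \emph{row-by-row matching} inside one segment. Recall that $\Gamma_i = \Gamma(\lambda_i')$ is grouped into rows $l = \lambda_i',\ldots,1$, each row being $(l,\lambda_i'+1),\ldots,(l,n)$. Fixing a row $l$ and writing $v := A(l)$ for the current value and $c := C_i(l)$ for its target, I would show that the reflections of $T_i$ in row $l$ are exactly those selected by the greedy rule, which advances $A(l)$ clockwise toward $c$ (i.e. keeps $A(m)$ weakly between $A(l)$ and $c$ in the circular order $\prec_{A(l)}$ of Definition~\ref{circle order}), and that $A(l)=c$ at the end of the row. The bridge is Proposition~\ref{Type A QB criterion}: an edge $w\xrightarrow{(l,m)} w(l,m)$ exists iff no intermediate position carries a value strictly circularly between $w(l)$ and $w(m)$. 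I would show that along the admissible path the value at position $l$ moves monotonically clockwise from $v$ toward $c$, landing at each step on the value found at the next admissible column $m$, and that the greedy target-condition selects exactly those $m$. Tracking the values evicted from position $l$ into the tail and checking they are deposited in increasing order then closes the induction for the tail-increasing invariant.

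The hardest part, I expect, is precisely this reconciliation of two differently phrased conditions — the QBG criterion stated through intermediate \emph{positions} versus the greedy rule stated through the \emph{target} value — while keeping the tail increasing. One must rule out both spurious selections (a swap the greedy would take that is not a QBG edge) and missed ones, handle the quantum (length-decreasing) edges on the same footing as the Bruhat ones, and correctly account for intermediate positions lying in the already-processed top part of the column versus those in the tail. Finally, for (i) I would prove the structural facts that $fill\_A(J)$ has increasing first column (the path starts at the identity, which I would record as a special case of the same within-segment invariant) and that each later column is the circular-minimum reordering, relative to the column on its left, of its sorted version; this is exactly the property that Algorithm~\ref{Reorder algorithm} inverts, so $ord\_A$ restores $fill\_A(J)$ from $sfill\_A(J)$, and composing with (ii) yields the theorem.
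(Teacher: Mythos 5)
Your high-level architecture (reduce the ``moreover'' part to a left-inverse statement using bijectivity of $\mbox{sfill\_A}$, characterize the image of $\mbox{fill\_A}$ so that $ord\_A$ fixes it, and prove $Path\_A(\mbox{fill\_A}(T))=T$ segment by segment via Proposition~\ref{Type A QB criterion} and row monotonicity) matches the strategy this paper uses for its type $B_n$ generalization (Remark~\ref{main theorem remark}, Propositions~\ref{main prop}, \ref{Total Path Prop}, \ref{SER prop}). However, your proof rests on a false lemma: the ``tail-increasing invariant'' $\pi_i(\lambda_i'+1)<\cdots<\pi_i(n)$ fails in general, and with it your reading of the reset $A:=C_{i-1}$ in Algorithm~\ref{Greedy algorithm} as ``complete $C_{i-1}$ by appending the missing values increasingly.'' Counterexample in type $A_2$ ($n=3$): take $\lambda=(3,1)$, so $\lambda'=(2,1,1)$ and $\Gamma=\Gamma(2)\Gamma(1)\Gamma(1)$, and the admissible set $J=\{1,2,3,5\}$, i.e.\ $T=((2,3),(1,3)\,|\,(1,2)\,|\,(1,2))$, with quantum Bruhat path
\[
[1,2,3]\xrightarrow{(2,3)}[1,3,2]\xrightarrow{(1,3)}[2,3,1]\xrightarrow{(1,2)}[3,2,1]\xrightarrow{(1,2)}[2,3,1]
\]
(the last edge is a quantum edge). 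Here $\pi_2=[3,2,1]$, whose tail $(2,1)$ below the column $C_2=(3)$ is \emph{not} increasing, and $\mbox{fill\_A}(T)=(2,3)\,|\,(3)\,|\,(2)$. If the third greedy pass is started from the increasing completion $[3,1,2]$ of $C_2$, it selects $(1,2)$ (since $3\prec 1\preceq 2$ in $\prec_3$), reaching $[1,3,2]$, and then also selects $(1,3)$ (since $1\prec 2\preceq 2$), so it outputs $S_3=((1,2),(1,3))\neq T_3=((1,2))$ while still landing on the target value $2$; nothing flags the error, the output is simply wrong, and the theorem fails under your interpretation. Both worked examples in the paper happen to satisfy your invariant, which is why your spot-check did not catch this.

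The underlying point is that the tail of $\pi_{i-1}$ is genuinely history-dependent: it is determined by the entire filling $C_1\ldots C_{i-1}$, not by $C_{i-1}$ alone, so no canonical completion of a single column can recover it. The correct reading of the algorithm --- and what this paper does explicitly in type $B_n$, where the proof of Proposition~\ref{main prop} applies Propositions~\ref{Total Path Prop} and~\ref{SER prop} iteratively, ``each time setting $u$ to be the previously obtained $v$'' --- is that the working permutation is carried across segments: pass $i$ begins with $A=\pi_{i-1}$, the full permutation produced at the end of pass $i-1$ (pass $1$ beginning at the identity $C_0$), and the per-segment uniqueness and monotonicity argument then shows inductively that $A=\pi_i$ at the end of pass $i$. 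No completion lemma is needed, and none is available. Once this is fixed, your claim (ii) becomes exactly the per-segment statement proved in the literature and generalized here (unique path from a given full permutation $u$ to a column satisfying the two adjacency conditions relative to $u[1,k]$, with labels a subsequence of $\Gamma(k)$, followed by the greedy rule), your claim (i) is the image characterization underlying Remark~\ref{main theorem remark}, and your left-inverse reduction goes through.
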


\subsection{The quantum alcove model and filling map in type $C_n$}\label{type C setup}
We start with the basic facts about the root system for type $C_{n}$.  We can identify the space $\mathfrak{h}^*_{\mathbb{R}}$ with $V:=\mathbb{R}^n$, with coordinate vectors $\varepsilon_1,\ldots,\varepsilon_n\in V$.  The root system is $\Phi = \{\pm\varepsilon_i\pm\varepsilon_j \,:\, i\neq j,\, 1\leq i<j \leq n\}\cup\{\pm 2\varepsilon_i \,:\, 1\leq i \leq n\}$. 

The Weyl group $W$ is the group of signed permutations $B_n$, which acts on $V$ by permuting the coordinates and changing their signs.  A signed permutation is a bijection $w$ from $[\overline{n}]:=\{1<2<\ldots <n<\overline{n}<\overline{n-1}<\ldots <\overline{1\}}$ to $[\overline{n}]$ which satisfies $w(\overline{\imath}) = \overline{w(i)}$.  Here, $\overline{\imath}$ is viewed as $-i$, so that $\overline{\overline{\imath}} = i$, and we can define $|i|$ and $sign(i)\in\{\pm 1\}$, for $i\in[\overline{n}]$.  We will use the so-called \textit{window notation} $w = w(1)w(2)\ldots w(n)$.  For simplicity, given $1\leq i<j\leq n$, we denote by $(i,j)$ and $(i,\overline{\jmath})$ the roots $\varepsilon_i-\varepsilon_j$ and $\varepsilon_i+\varepsilon_j$, respectively; the corresponding reflections, denoted in the same way, are identified with the composition of transpositions  $t_{ij}t_{\overline{\jmath}\overline{\imath}}$ and $t_{i\overline{\jmath}}t_{j\overline{\imath}}$, respectively.  Finally, we denote by $(i,\overline{\imath})$ the root $2\varepsilon_i$ and the corresponding reflection, identified with the transposition $t_{i\overline{\imath}}$.

We now consider the specialization of the quantum alcove model to type $C_n$.  For any $k = 1,\ldots,n$, we have the following (split) $\omega_k$-chain, denoted by $\Gamma^l(k)\Gamma^r(k)$ \cite{Lenart 2012}, where:
\begin{equation}\label{omegakchain}\Gamma^l(k):= \Gamma^{kk}\ldots \Gamma^{k1}, \hspace{8pt} \Gamma^r(k):=\Gamma^k\ldots \Gamma^2\,,\end{equation}
\vspace{-12pt}
\begin{equation*}
\begin{array}{lllll}
\;\;\;\;\;\;\;\;\;\;\,\Gamma^{ki}:=(
&\!\!\!\! (i,k+1),&(i,k+2),&\ldots,&(i,n)\,,\\
&\!\!\!\! (i,\overline{\imath})\,,\\
&\!\!\!\! (i,\overline{n}),&(i,\overline{n-1}),&\ldots,&(i,\overline{k+1})\,,\\
&\!\!\!\! (i,\overline{i-1}),&(i,\overline{i-2}),&\ldots,&(i,\overline{1})\:)\,,
\end{array}
\end{equation*}
\vspace{-9pt}
$$\!\!\!\!\!\Gamma^{i}:=((i,\overline{i-1}),(i,\overline{i-2}),\ldots,(i,\overline{1}))\,.$$
We refer to the four rows above in $\Gamma^{ki}$ as stages I, II, III, and IV respectively.
We can construct a $\lambda$-chain as a concatenation $\Gamma:=\Gamma_{1}^l\Gamma_{1}^r\ldots \Gamma_{\lambda_1}^l\Gamma_{\lambda_1}^r$, where  $\Gamma^l_i:=\Gamma^l(\lambda'_i)$ and $\Gamma^r_i:=\Gamma^r(\lambda'_i)$.  We will use interchangeably the set of positions $J$ in the $\lambda$-chain $\Gamma$ and the sequence of roots $T$ in $\Gamma$ in those positions, which we call a \textit{folding sequence}.  The factorization of $\Gamma$ with factors  $\Gamma^l_i$,$\Gamma^r_i$ induces a factorization of $T$ with factors $T^l_i$,$T^r_i$. We define the circle order $\prec_a$ in a similar way to Definition~\ref{circle order}, but on the set $[\overline{n}]$. Below is a criterion for ${\rm{QBG}}(W)$ in type $C_n$, analogous to Proposition~\ref{Type A QB criterion}.

\begin{proposition}\label{type C bruhat conditions}{\rm \cite{Lenart 2012}} 
The quantum Bruhat graph of type $C_n$ has the following edges:
\begin{enumerate}
\item given $1\leq i < j\leq n$, we have an edge  $w\xrightarrow{(i,j)} w(i,j)$ if and only if there is no $k$ such that $i<k<j$ and $w(i)\prec w(k)\prec w(j)$;
\item given $1\leq i < j\leq n$, we have an edge $w\xrightarrow{(i,\overline{\jmath})} w(i,\overline{\jmath})$ if and only if $w(i)<w(\overline{\jmath})$, $sign(w(i))=sign(w(\overline{\jmath})$, and there is no $k$ such that $i<k<\overline{\jmath}$ and $w(i)\prec w(k)\prec w(\overline{\jmath})$;
\item given $1\leq i \leq n$, we have an edge $w\xrightarrow{(i,\overline{\imath})} w(i,\overline{\imath})$ if and only if there is no $k$ such that $i<k<\overline{\imath}$ (or equivalently, $i<k\leq n$) and $w(i)\prec w(k)\prec w(\overline{\imath})$.
\end{enumerate}
\end{proposition}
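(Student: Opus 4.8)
The plan is to reduce the proposition to a single combinatorial computation of a length difference. By the definitions in Section~\ref{section root systems}, an edge $w\xrightarrow{\alpha}ws_\alpha$ of the quantum Bruhat graph occurs in exactly two situations: the Bruhat covering case $l(ws_\alpha)=l(w)+1$, and the quantum case $l(ws_\alpha)=l(w)-2\langle\rho,\alpha^\vee\rangle+1$. These are precisely the minimal positive value $+1$ and the overall minimum $-(2\langle\rho,\alpha^\vee\rangle-1)$ attainable by $l(ws_\alpha)-l(w)$, so the whole statement amounts to computing this difference explicitly in window notation and deciding when it hits one of the two extremes. This is exactly the mechanism behind Proposition~\ref{Type A QB criterion}, which I would use as a template.

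First I would realize $w\in B_n$ as a permutation of the chain $[\overline{n}]$ subject to $w(\overline{\imath})=\overline{w(i)}$, and express $l(w)$ as an inversion statistic on this chain, normalized to account for the central symmetry (inversions occur in mirror pairs $(a,b)\leftrightarrow(\overline{b},\overline{a})$ together with central contributions). Under this realization the three reflections of Section~\ref{type C setup} become: (1) the mirror pair of transpositions swapping positions $(i,j)$ and $(\overline{\jmath},\overline{\imath})$; (2) the mirror pair swapping $(i,\overline{\jmath})$ and $(j,\overline{\imath})$; and (3) the single central transposition swapping $(i,\overline{\imath})$. This explains the shape of the statement: each criterion is a type~$A$--style ``no interpolation'' condition, with the index $k$ ranging over $[\overline{n}]$ and the circular order $\prec$ taken on $[\overline{n}]$.

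The core computation then mirrors type~$A$: swapping two window entries alters only the inversions involving the swapped positions, and for each intermediate index $k$ one records whether $w(k)$ is \emph{circularly interpolating}, i.e.\ whether $w(i)\prec w(k)\prec w(j)$ in the relevant circular order. A short tabulation shows that each interpolating index shifts $\Delta l=l(ws_\alpha)-l(w)$ by $2$ away from whichever extreme ($+1$ or $-(2\langle\rho,\alpha^\vee\rangle-1)$) it would otherwise attain, into the forbidden intermediate range. Hence $\Delta l$ is extreme, and the edge exists, precisely when there is no interpolating index --- uniformly recovering ``no $k$ with $w(i)\prec w(k)\prec w(j)$'' and its analogues, with the circular order absorbing the distinction between the covering (increasing) and quantum (decreasing) flavors.

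The hard part will be cases (2) and (3), where the swapped positions straddle the midpoint of $[\overline{n}]$ and the mirror symmetry genuinely interferes. In case (2) the reflection $(i,\overline{\jmath})$ equals the product $t_{i\overline{\jmath}}t_{j\overline{\imath}}$ of two mirror transpositions whose supports may interleave, so the inversion bookkeeping no longer splits as twice an independent type~$A$ count; reconciling the contribution of a pair and its mirror when both meet the swapped positions is exactly what forces the extra sign hypothesis $sign(w(i))=sign(w(\overline{\jmath}))$ --- without it $\Delta l$ lands strictly between the two extremes and no edge occurs. In case (3) the central transposition $(i,\overline{\imath})=t_{i\overline{\imath}}$ is its own mirror, so the count must be folded against the center, which is the origin of the reduction ``$i<k<\overline{\imath}$, equivalently $i<k\le n$.'' Verifying that these corrections remove exactly the non-extreme length changes, and produce no edges beyond them, is the delicate heart of the argument; by contrast, case (1) follows from the type~$A$ criterion (Proposition~\ref{Type A QB criterion}) together with only the routine mirror-pair doubling.
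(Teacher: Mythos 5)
The paper itself does not prove this proposition: it is imported verbatim from \cite{Lenart 2012}, so the only benchmark is that source, whose argument is indeed the length-difference/inversion computation you outline. Your plan works for cases (1) and (3): those reflections are \emph{quantum roots} of type $C_n$, meaning $\ell(s_\alpha)=2\langle\rho,\alpha^\vee\rangle-1$, the inversion bookkeeping reduces to type-$A$--style counting with respect to the total order on $[\overline{n}]$, and the circular order genuinely unifies the cover and quantum alternatives there.

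The genuine gap is in case (2), and it is not where you place it. Your central mechanism --- each circularly interpolating index shifts $\Delta\ell$ by $2$ away from an extreme, hence ``edge iff no interpolating index'' --- tacitly assumes that the quantum value $1-2\langle\rho,\alpha^\vee\rangle$ is attainable, i.e.\ that every root is a quantum root. This fails precisely for $\alpha=\varepsilon_i+\varepsilon_j$ ($i<j$) in type $C_n$: there $\ell(s_\alpha)=2\langle\rho,\alpha^\vee\rangle-3$, so since $\ell(ws_\alpha)\ge\ell(w)-\ell(s_\alpha)$, no $w$ whatsoever satisfies the quantum condition, and down edges labelled $(i,\overline{\jmath})$ simply do not exist. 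That structural fact, not mirror-pair bookkeeping, is why condition (2) --- unlike (1) and (3) --- requires the \emph{linear} increase $w(i)<w(\overline{\jmath})$ together with the sign condition: only Bruhat covers can occur, and one must translate the cover condition $|N(s_\alpha)\cap\mathrm{Inv}(w)|=(\ell(s_\alpha)-1)/2$ into the three stated requirements. Concretely, your criterion as stated gives wrong answers: in $C_2$ with $\alpha=\varepsilon_1+\varepsilon_2$ (so $\langle\rho,\alpha^\vee\rangle=3$), the element $w=21$ has no interpolating index ($w(2)=1$ is not circularly between $w(1)=2$ and $w(\overline{2})=\overline{1}$), yet $\ell(ws_\alpha)-\ell(w)=4-1=3$, so there is no edge; likewise $w=\overline{1}\,\overline{2}$ has no interpolating index and $\Delta\ell=-3\neq 1-2\langle\rho,\alpha^\vee\rangle=-5$. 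The first example also refutes your assertion that a failed sign hypothesis puts $\Delta\ell$ ``strictly between the two extremes'': it lands above $+1$. So the case-(2) portion of your plan, which keeps both extremes as live targets and expects the sign condition to emerge from reconciling mirrored inversion pairs, would not terminate in the stated criterion; the missing idea is the quantum/non-quantum dichotomy of roots, with all the $(i,\overline{\jmath})$ reflections on the non-quantum side.
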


\begin{definition}\label{deffillc}
Given a folding sequence $T$, we consider the signed permutations
$\pi^l_i:=T_{1}^lT_1^r\ldots T_{i-1}^lT_{i-1}^rT^l_i$, $\pi^r_i:=\pi^l_iT^r_i.$
Then the {\rm filling map} is the map ``$\mbox{fill\_C}$'' from folding sequences $T$ in $\mathcal{A}(\lambda)$ to fillings $\mbox{fill\_C}(T) = C^l_{1}C^r_{1}\ldots C^l_{\lambda_1}C^r_{\lambda_1}$ of the shape $2\lambda$, which are viewed as concatenations of columns; here  $C^l_i:=\pi^l_i[1,\lambda'_i]$ and $C^r_i:=\pi^r_i[1,\lambda'_i]$, for $i=1,\ldots,\lambda_1.$ 
We then define $\mbox{sfill\_C}: \mathcal{A}(\lambda)\rightarrow B^{\lambda'}$ to be the composition ``$\mbox{sort}\circ \mbox{fill\_C}$'', where ``{sort}'' reorders the entries of each column increasingly; here we represent a KR crystal $B^{r,1}$ as a {\rm split} (also known as {\rm doubled}) KN column of height $r$, see Section~{\rm \ref{invc}}.
\end{definition}

\begin{theorem}{\rm \cite{Lenart 2012,Lenart Lubovsky 2015b}}
The map ``$sfill\_C$'' is an affine crystal isomorphism between $\mathcal{A} (\lambda)$ and 
$B^{\lambda'}$.
\end{theorem}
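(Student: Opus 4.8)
The plan is to mirror the two-part strategy used for type $A_{n-1}$: first prove that $\mbox{sfill\_C}$ is a bijection by exhibiting an explicit inverse, and then verify that this bijection intertwines all crystal operators $f_i$ and $e_i$ for $i\geq 0$, including the affine operator $f_0$.

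For bijectivity, I would construct the inverse as a composition of two sub-algorithms analogous to those in type $A$ (Algorithms~\ref{Reorder algorithm} and~\ref{Greedy algorithm}). Given a filling $b\in B^{\lambda'}$, presented as split KN columns, a reorder step ``$\mbox{ord\_C}$'' first undoes the action of $\mbox{sort}$, recovering the unsorted column pairs $C^l_i$, $C^r_i$ by re-threading the entries according to the circular order $\prec$ on $[\overline{n}]$. A path step ``$\mbox{Path\_C}$'' then reads off, greedily along each factor $\Gamma^l_i$ and $\Gamma^r_i$ of the $\lambda$-chain, exactly which roots must be folding positions in order to realize the successive transitions between the signed permutations $\pi^l_i$, $\pi^r_i$ that record the columns. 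The correctness of this greedy reading is governed by the type $C$ quantum Bruhat graph criterion (Proposition~\ref{type C bruhat conditions}): at each candidate root one tests the corresponding edge condition and folds precisely when the prescribed column entry disagrees with the current permutation value. One then shows that $\mbox{Path\_C}\circ\mbox{ord\_C}$ is a two-sided inverse to $\mbox{sfill\_C}$, following the template of the type $A$ theorem.

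For the crystal isomorphism, weight preservation is immediate from the definition of $\mbox{fill\_C}$. It remains to check compatibility with the crystal operators. For the classical operators $f_i$, $i>0$, this amounts to showing that the bracketing (signature) rule computing $f_i$ in the quantum alcove model matches the corresponding rule on the tensor product of split KN columns; via the tensor product rule of Definition~\ref{combinatorial def of crystal} together with the $\varepsilon_i$--$\varphi_i$ data, this reduces to a local comparison on individual column pairs. For the affine operator $f_0$, I would invoke~\cite{Lenart Lubovsky 2015b}, where $f_0$ is defined on the quantum alcove model and shown to correspond to the affine action on the tableau side under the type $C$ bijection.

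The hard part will be the verification underlying the path step. Unlike type $A$, the factor $\Gamma^{ki}$ decomposes into four stages (I--IV) and involves three distinct root types $(i,j)$, $(i,\overline{\jmath})$, and $(i,\overline{\imath})$, each with its own edge condition in Proposition~\ref{type C bruhat conditions}. One must show that the greedy algorithm yields exactly the correct folding sequence across stage boundaries and across sign changes in the signed permutation, and that the split (doubled) column structure is faithfully reconstructed---ensuring that no spurious folds are introduced and none missed when the reflections $(i,\overline{\imath})$ and $(i,\overline{\jmath})$ interact with the circular order. This case analysis is where the technical weight of the type $C$ argument lies, and is precisely the content supplied by~\cite{Lenart 2012}.
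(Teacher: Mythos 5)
Your proposal is correct and takes essentially the same route as the paper: bijectivity via the explicit inverse ``$Path\_C\circ ord\_C\circ split\_C$'' (reorder and greedy path algorithms driven by the type $C_n$ quantum Bruhat graph criterion of Proposition~\ref{type C bruhat conditions}), with the technical case analysis and the compatibility with the crystal operators, including $f_0$, deferred to \cite{Lenart 2012} and \cite{Lenart Lubovsky 2015b}, exactly as the paper does. The only cosmetic difference is that you fold the splitting of KN columns into the presentation of the input rather than treating ``$split\_C$'' as an explicit first step of the inverse map.
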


\subsection{The inverse map in type $C_n$}\label{inverse map type C section}\label{invc}

Recall from the construction of the filling map in type $A_{n-1}$ that we treated the columns of a filling as initial segments of permutations. However, the KN columns of type $C_n$ allow for both $i$ and $\overline{\imath}$ to appear as entries in such a column.  In order to pursue the analogy with type $A_{n-1}$, cf. Definition~\ref{deffillc}, we need to replace a KN column with its \textit{split} version, i.e., two columns of the same height as the initial column. The splitting procedure, described below, gives an equivalent definition of KN columns, see Section~\ref{KR section}.

\begin{definition}\label{type C splitting}{\rm \cite{Lecouvey 2002}}
Let $C$ be a column and $I=\{z_1>\ldots >z_r\}$ be the set of unbarred letters $z$ such that the pair $(z,\overline{z})$ occurs in $C$. The column $C$ can be split when there exists a set of $r$ unbarred letters $J=\{t_1>\ldots>t_r\}\subset [n]$ such that 
$t_1$ is the greatest letter in $[n]$ satisfying: $t_1<z_1, t_1\notin C$, and $\overline{t_1}\notin C$, and for $i=2,\ldots,r$, the letter $t_i$ is the greatest value in $[n]$ satisfying $t_i<min(t_{i-1},z_i),t_i\notin C$, and $\overline{t_i}\notin C$.
In this case we write:
\begin{enumerate}
\item $rC$ for the column obtained by changing $\overline{z_i}$ into $\overline{t_i}$ in $C$ for each letter $z_i\in I$, and by reordering if necessary,
\item $lC$ for the column obtained by changing $z_i$ into $t_i$ in $C$ for each letter $z_i\in I$, and by reordering if necessary.
\end{enumerate}
The pair $(lC,rC)$ is then called a {\rm split} (or {\rm doubled}) column.
\end{definition}

Given our fixed dominant weight $\lambda$, an element $b$ of $B^{\lambda'}$ can be viewed as a concatenation of KN columns $b_1\ldots b_{\lambda_1}$, with $b_i$ of height $\lambda_i'$. Let $b':=b^l_1b^r_1\ldots b^l_{\lambda_1}b^r_{\lambda_1}$ be the associated filling of the shape $2\lambda$, where  $(b^l_i,b^r_i) := (lb_i,rb_i)$ is the  splitting of the KN column $b_i$.

\vspace{12pt}The algorithm for mapping $b'$ to a sequence of roots $S\subset \Gamma$ is similar to the type $A_{n-1}$ one.  The Reorder algorithm $``ord\_C''$ for type $C_n$ is the obvious extension from type $A_{n-1}$.  The path algorithm $``Path\_C''$ is also similar to its type $A_{n-1}$ counterpart, but merits discussion. Recall that an $\omega_k$-chain in type $C_n$ factors as $\Gamma^l(k)\Gamma^r(k)$.  While the path algorithm parses through $\Gamma^l(k)$, it outputs a chain from the previous right column to the current left column reordered.  While the path algorithm parses through $\Gamma^r(k)$, it outputs a chain between the current left and right columns, both reordered.

\begin{theorem}{\rm \cite{Lenart 2012}} 
The map ``$Path\_C\circ ord\_C\circ split\_C$'' is the inverse of the type $C_n$ ``$sfill\_C$'' map. 
\end{theorem}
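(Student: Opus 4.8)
The plan is to follow the blueprint of the type $A_{n-1}$ argument, where the inverse of the filling map was obtained by showing that the greedy path algorithm recovers the folding sequence from the \emph{unsorted} filling. Since $sfill\_C$ is already known to be a bijection (indeed an affine crystal isomorphism), it suffices to exhibit a one-sided inverse: I will show that for every folding sequence $T\in\mathcal{A}(\lambda)$,
\[
(Path\_C\circ ord\_C\circ split\_C)(sfill\_C(T))=T.
\]
This splits into three claims, the last of which is the substantial one: (a) $split\_C$ recovers the sorted split-column filling, namely $split\_C(sfill\_C(T))=sort(fill\_C(T))$; (b) $ord\_C$ undoes the column sorting, namely $ord\_C(sort(fill\_C(T)))=fill\_C(T)$; and (c) the path algorithm recovers the folding sequence, namely $Path\_C(fill\_C(T))=T$. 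Granting these, the displayed identity follows by composition, and bijectivity of $sfill\_C$ upgrades the left inverse to the two-sided inverse.

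Claims (a) and (b) are bookkeeping. For (a), the columns $C^l_i,C^r_i$ produced by $fill\_C$ in Definition~\ref{deffillc} are, after sorting, precisely the left and right halves of the split KN column attached to the $i$th KN column $b_i$ of $b=sfill\_C(T)$; thus applying Lecouvey's splitting of Definition~\ref{type C splitting} to each $b_i$ returns exactly these sorted halves, using that splitting and merging are mutually inverse on admissible columns. Claim (b) is the verbatim analogue of the type $A_{n-1}$ fact that $ord\_A$ reverses $sort$: the reordering $ord\_C$ is defined through the circular order $\prec_a$ on $[\overline{n}]$ so that, column by column and left to right, it reconstructs the order of entries present in $fill\_C$ before sorting, and the proof transcribes the type $A_{n-1}$ one once the circular order is transported to $[\overline{n}]$.

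The heart of the matter is Claim (c), which I would prove by induction along the factorization $\Gamma=\Gamma^l_1\Gamma^r_1\cdots\Gamma^l_{\lambda_1}\Gamma^r_{\lambda_1}$, checking that the greedy choices of $Path\_C$ within each factor reproduce the folding positions recorded in $T^l_i$ and $T^r_i$. As observed after Definition~\ref{deffillc}, while parsing $\Gamma^l(k)$ the algorithm must trace a quantum Bruhat path from the previous right column to the current left column, and while parsing $\Gamma^r(k)$ a path between the current left and right columns; fixing this target column at each stage turns each factor into a type-$A_{n-1}$-style greedy recovery. The key local assertion is that the greedy test ``$A(l)\neq C_i(l)$ and $A(l)\prec A(m)\prec C_i(l)$'' selects the position $(l,m)$ if and only if $(l,m)$ is a folding position of $T$; this is verified by matching the three instances of the test against the three edge families $(i,j)$, $(i,\overline{\jmath})$, and $(i,\overline{\imath})$ in the type-$C_n$ quantum Bruhat criterion of Proposition~\ref{type C bruhat conditions}.

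The main obstacle is exactly this local case analysis, which is considerably more delicate than in type $A_{n-1}$. The extra root families $(i,\overline{\jmath})$ and $(i,\overline{\imath})$, together with the four stages I--IV of each $\Gamma^{ki}$, force a careful tracking of sign changes: condition (2) of Proposition~\ref{type C bruhat conditions} couples the existence of an edge to the constraint $sign(w(i))=sign(w(\overline{\jmath}))$, and the circular order $\prec_a$ on $[\overline{n}]$ behaves differently as entries cross between barred and unbarred values. One must also confirm that the greedy output is a genuine admissible subset, i.e. that the recorded positions trace an actual path in the quantum Bruhat graph and not merely a set of positions, so that the map indeed lands in $\mathcal{A}(\lambda)$. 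Organizing these sign interactions uniformly across the stages, and checking that the greedy choice neither overshoots nor skips a forced folding, is where the bulk of the work lies.
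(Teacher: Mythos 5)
Your proposal is correct, but it packages the argument differently from the source \cite{Lenart 2012}, whose structure this paper reproduces for type $B_n$ in Section~\ref{B}. The technical core is shared: your claims (b) and (c) — that $ord\_C$ recovers $fill\_C(T)$ from its sorted columns, and that the greedy $Path\_C$ algorithm recovers $T$ by induction along $\Gamma^l_1\Gamma^r_1\cdots\Gamma^l_{\lambda_1}\Gamma^r_{\lambda_1}$ with a case analysis against Proposition~\ref{type C bruhat conditions} — are precisely the ``necessity'' half of the paper's template (cf.\ Lemma~\ref{row monotonicity}, Proposition~\ref{Reorder Nec}, and Propositions~\ref{Total Path Prop}, \ref{SER prop} in the type $B_n$ case; the block-off complications there are absent in type $C_n$). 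What is genuinely different is the logic wrapped around that core: you prove only the one-sided identity $(Path\_C\circ ord\_C\circ split\_C)\circ sfill\_C=\mathrm{id}$ and upgrade it using the previously cited bijectivity of $sfill\_C$, whereas the paper's route proves injectivity of $fill\_C$ on $\mathcal{A}(\lambda)$ together with an intrinsic characterization of its image (necessity \emph{and} sufficiency of the column conditions), so that the two-sided inverse property — and bijectivity itself — come out as conclusions rather than hypotheses. Your shortcut buys real economy: the sufficiency direction (every filling satisfying the conditions arises from an admissible folding), which consumes much of the analogues of Sections~\ref{R2nextL} and~\ref{L2R}, is never needed. It has two costs you should make explicit. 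First, a circularity hazard: in \cite{Lenart 2012} the bijectivity of $sfill\_C$ is itself established by constructing this very inverse, so you must source the bijectivity you invoke from the crystal-theoretic results of \cite{Lenart Lubovsky 2015b,LNSSS 2016} rather than from \cite{Lenart 2012}. Second, your claim (b) is not mere bookkeeping: it contains the necessity of the reorder conditions for adjacent columns of $fill\_C(T)$ (the type $C_n$ analogue of Proposition~\ref{Reorder Nec}), whose proof requires the same quantum Bruhat graph analysis of signs and of stages I--IV as claim (c); only the statement that the greedy reordering is the unique one compatible with those conditions is elementary.
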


\section{The bijection in type $B_n$}\label{B}
We now move to the main content of this paper: extending the work done in types $A_{n-1}$ and $C_n$ to both types $B_n$ and $D_n$.  The filling map naturally extends to all classical types, however the corresponding inverse maps become more interesting as we progress to type $B_n$, and further still with type $D_n$.  The changes in the inverse maps are direct consequences of differences between the corresponding structure of the KN columns, as well as differences in the quantum Bruhat graphs. 

\subsection{The type $B_n$ Kirillov-Reshetikhin crystals}\label{type b kr section}

We begin by recalling the basic facts of the type $B_n$ root system.  Similar to type $C_n$, we can identify the space $\mathfrak{h}^*_{\mathbb{R}}$ with $V:=\mathbb{R}^n$, with coordinate vectors $\varepsilon_1,\ldots,\varepsilon_n\in V$.  The root system is $\Phi = \{\pm\varepsilon_i\pm\varepsilon_j \,:\, i\neq j,\, 1\leq i<j \leq n\}\cup\{\pm \varepsilon_i \,:\, 1\leq i \leq n\}$. 

The Weyl group $W$ is the group of signed permutations $B_n$, which acts on $V$ by permuting the coordinates and changing their signs.  We again note that a signed permutation is a bijection $w$ from $[\overline{n}]:=\{1<2<\ldots <n<\overline{n}<\overline{n-1}<\ldots <\overline{1\}}$ to $[\overline{n}]$ which satisfies $w(\overline{\imath}) = \overline{w(i)}$.  Here, $\overline{\imath}$ is viewed as $-i$, so that $\overline{\overline{\imath}} = i$, and we can define $|i|$ and $sign(i)\in\{\pm 1\}$, for $i\in[\overline{n}]$ in the obvious way.  We will use the so-called \textit{window notation} $w = w(1)w(2)\ldots w(n)$.  For simplicity, given $1\leq i<j\leq n$, we denote by $(i,j)$ and $(i,\overline{\jmath})$ the roots $\varepsilon_i-\varepsilon_j$ and $\varepsilon_i+\varepsilon_j$, respectively; the corresponding reflections, denoted in the same way, are identified with the composition of transpositions  $t_{ij}t_{\overline{\jmath}\overline{\imath}}$ and $t_{i\overline{\jmath}}t_{j\overline{\imath}}$, respectively.  Finally, we denote by $(i,\overline{\imath})$ the root $\varepsilon_i$ and the corresponding reflection, identified with the transposition $t_{i\overline{\imath}}$.

\vspace{12pt}
Recall from Section~\ref{KR section} that, given a fixed dominant weight $\lambda$, we can write
 \[B^{\lambda'} = \bigotimes\limits^1_{i = \lambda_1}B^{\lambda'_i,1},\]
   where each $B^{r,1}$ is a column shape type $B_n$ Kirillov-Reshitihkin crystal. When viewed as a classical type crystal, we have \[B^{r,1}\cong B(\omega_r) \sqcup B(\omega_{r-2}) \sqcup B(\omega_{r-4})\sqcup\ldots\] where, as before, the elements of the set $B(\omega_k)$ are given by KN columns of height $k$.  As in type $C_n$, the KN columns in type $B_n$ are allowed to contain both $i$ and $\overline{\imath}$ values; they may also contain the value $0$.  This is addressed in the type $B_n$ splitting algorithm ``{\it split\_B}''  by adding the $0$ values in the column to the set $I$ (see Definition~\ref{type C splitting}), and then by proceeding as in type $C_n$ {\rm \cite{lecsbd}}. As it was in Type $C_n$, it will be usefull to realize the tensor factors $B^{k,1}$ in terms of split columns of height $k$.

\subsection{The quantum alcove model and filling map in type $B_n$}\label{type B alcove def section}

 We now consider the specialization of the quantum alcove model to type $B_n$.  For any $k = 1,\ldots,n$, we define the following (split) $\omega_k$-chain, denoted by $\Gamma^l(k)\Gamma^r(k)$ \cite{Lenart 2012}, similarly to type $C_n$ where:
\begin{equation} \Gamma^l(k):= \Gamma^{kk}\ldots \Gamma^{k1}, \hspace{8pt} \Gamma^r(k):=\Gamma^k\ldots \Gamma^2\,,\end{equation}
\vspace{-12pt}
\begin{equation*}
\begin{array}{lllll}
\;\;\;\;\;\;\;\;\;\;\,\Gamma^{ki}:=(
&\!\!\!\! (i,k+1),&(i,k+2),&\ldots,&(i,n)\,,\\
&\!\!\!\! (i,\overline{\imath})\,,\\
&\!\!\!\! (i,\overline{n}),&(i,\overline{n-1}),&\ldots,&(i,\overline{k+1})\,,\\
&\!\!\!\! (i,\overline{i-1}),&(i,\overline{i-2}),&\ldots,&(i,\overline{1})\:)\,,
\end{array}
\end{equation*}
\vspace{-9pt}
$$\!\!\!\!\!\Gamma^{i}:=((i,\overline{i-1}),(i,\overline{i-2}),\ldots,(i,\overline{1}))\,.$$
We will continue to refer to the four rows above in $\Gamma^{ki}$ as stages I, II, III, and IV respectively (c.f. Figure~\ref{stages_fig}).
We can construct a $\lambda$-chain as a concatenation $\Gamma:=\Gamma_{1}^l\Gamma_{1}^r\ldots \Gamma_{\lambda_1}^l\Gamma_{\lambda_1}^r$, where  $\Gamma^l_i:=\Gamma^l(\lambda'_i)$ and $\Gamma^r_i:=\Gamma^r(\lambda'_i)$.  We will use interchangeably the set of positions $J$ in the $\lambda$-chain $\Gamma$ and the sequence of roots $T$ in $\Gamma$ in those positions, which we call a \textit{folding sequence}.  The factorization of $\Gamma$ with factors  $\Gamma^l_i$,$\Gamma^r_i$ induces a factorization of $T$ with factors $T^l_i$,$T^r_i$. We use the same circle order $\prec_a$ on the set $[\overline{n}]$ as the one in type $C_n$.

 The following are conditions on the quantum Bruhat graph of type $B_n$. 
\begin{proposition}\label{type B bruhat conditions}{\rm \cite{Briggs}} 
The quantum Bruhat graph of type $B_n$ has the following edges.
\begin{enumerate}
\item Given $1\leq i < j\leq n$, we have an edge $w\xrightarrow{(i,j)} w(i,j)$ if and only if there is no $k$ such that $i<k<j$ and $w(i)\prec w(k)\prec w(j)$.
\item Given $1\leq i<j\leq n$, we have an edge $w\xrightarrow{(i,\overline{\jmath})} w(i,\overline{\jmath})$ if and only if one of the following conditions holds:
\begin{enumerate}
\item $w(i)<w(\overline{\jmath})$, $sign(w(i))=sign(w(\overline{\jmath}))$, and there is no $k$ such that $i<k<\overline{\jmath}$ and $w(i)< w(k)< w(\overline{\jmath})$;
\item $sign(w(i))=-1$, $sign(w(\overline{\jmath}))=1$, and there is no $k$ such that $i<k\neq j< \overline{\jmath}$ and $w(i)\prec w(k)\prec w(\overline{\jmath})$.
\end{enumerate}
 \item Given $1\leq i\leq n$, we have an edge $w\xrightarrow{(i,\overline{\imath})} w(i,\overline{\imath})$ if and only if:
\begin{enumerate}
\item $w(i)<w(\overline{\imath})$ and there is no $k$ such that $i<k<\overline{\imath}$ and $w(i)\prec w(k)\prec w(\overline{\imath})$;
\item or $w(\overline{\imath})<w(i)$ and $i=n$.
\end{enumerate}
\end{enumerate}
\end{proposition}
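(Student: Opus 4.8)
The plan is to work directly from the definition of the quantum Bruhat graph: an edge $w\xrightarrow{\alpha}ws_\alpha$ (with $\alpha\in\Phi^+$) occurs precisely when either $\ell(ws_\alpha)=\ell(w)+1$ (a Bruhat cover, an \emph{up-edge}) or $\ell(ws_\alpha)=\ell(w)-2\langle\rho,\alpha^\vee\rangle+1$ (a quantum cover, a \emph{down-edge}), as in Section~\ref{section root systems}. The observation that organizes the whole argument is that the Weyl group of type $B_n$ coincides with that of type $C_n$: both are the group of signed permutations with the same Coxeter generators, hence the same length function $\ell$ and the same Bruhat order, and the reflections $s_{(i,j)},s_{(i,\overline{\jmath})},s_{(i,\overline{\imath})}$ are literally the same group elements in the two types (in particular $s_{\varepsilon_i}=s_{2\varepsilon_i}=t_{i\overline{\imath}}$). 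Consequently the up-edges of ${\rm QBG}(B_n)$ coincide exactly with those of ${\rm QBG}(C_n)$; only the down-edges can differ, and they do so solely through $\langle\rho,\alpha^\vee\rangle$, which is computed from the root system and is dual in the two types.

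I would first record what this inheritance gives. Computing the coheights in type $B_n$ yields $\langle\rho,\alpha^\vee\rangle=j-i$ for $(i,j)$, $\ 2n-i-j+1$ for $(i,\overline{\jmath})$, and $2n-2i+1$ for $(i,\overline{\imath})$. For the root $(i,j)$ this agrees with type $C_n$ (the constant shift in $\rho$ cancels in $\rho_i-\rho_j$), so both the up- and down-edge conditions match and condition~(1) is inherited verbatim from Proposition~\ref{type C bruhat conditions}. For $(i,\overline{\jmath})$, the up-edges give condition~(2a); since there $w(i)$ and $w(\overline{\jmath})$ have equal sign and $w(i)<w(\overline{\jmath})$, the relevant circular arc does not wrap around, so $\prec$ collapses to $<$, which is why (2a) is phrased with ``$<$''. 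For $(i,\overline{\imath})$, the up-edges give condition~(3a), namely the subcase $w(i)<w(\overline{\imath})$ of the corresponding type $C_n$ condition.

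Next I would isolate the genuinely new content, the down-edges, by comparing the required drop $2\langle\rho,\alpha^\vee\rangle-1$ with the maximal possible decrease $\ell(s_\alpha)=\#\{\beta\in\Phi^+ : s_\alpha\beta<0\}$, computed directly as $2(j-i)-1$, $\ 4n-2i-2j+1$, and $2n-2i+1$ for the three root types. For the two long roots $(i,j)$ and $(i,\overline{\jmath})$ the drop equals $\ell(s_\alpha)$, so a down-edge is exactly a \emph{maximal} length decrease; tracking the inversion set $\{\beta\in\Phi^+:s_{(i,\overline{\jmath})}\beta<0\}$ through $w$ converts this into condition~(2b), in which $w(i)$ and $w(\overline{\jmath})$ must have opposite signs, the arc wraps around (forcing $\prec$ rather than $<$), and the partner index $k=j$ is excluded. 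For the short root $(i,\overline{\imath})$ the required drop $4n-4i+1$ strictly exceeds $\ell(s_{(i,\overline{\imath})})=2n-2i+1$ whenever $i<n$, so no such down-edge can occur; equality holds only at $i=n$, where both quantities equal $1$ and $s_{(n,\overline{n})}=s_n$ is simple, giving exactly condition~(3b). This comparison also explains why the type $C_n$ down-edges of $t_{i\overline{\imath}}$ for $i<n$ simply disappear in type $B_n$.

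The main obstacle will be condition~(2b): converting ``maximal length decrease under $s_{(i,\overline{\jmath})}$'' into the precise wrap-around circular condition with the exclusion $k\neq j$. This is not a formal consequence of the arc bookkeeping but requires identifying $\{\beta\in\Phi^+:s_{(i,\overline{\jmath})}\beta<0\}$ explicitly and showing that the maximal drop is attained iff $w$ sends every such $\beta$ to a negative root, which is what pins down the opposite-sign requirement, the wrapped arc, and the omission of the partner index. By contrast the short-root case is immediate once $\ell(s_{(i,\overline{\imath})})$ is computed, and conditions~(1), (2a), (3a) are inherited from type $C_n$. Assembling the inherited up-edges with the two new families of down-edges then yields the three stated conditions.
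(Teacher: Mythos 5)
The paper itself gives no proof of this proposition: it is imported from Briggs's thesis \cite{Briggs}, where it is obtained by a direct case analysis of length changes of signed permutations. Your route is therefore genuinely different, and it is correct. Its organizing idea, that $W(B_n)=W(C_n)$ as Coxeter groups so that Bruhat covers (and hence all up-edges) can be inherited from Proposition~\ref{type C bruhat conditions}, is sound, and your numerical scaffolding checks out: $\ell(s_\alpha)=2(j-i)-1,\ 4n-2i-2j+1,\ 2n-2i+1$ for the three families of reflections, against required drops $2\langle\rho,\alpha^\vee\rangle-1=2(j-i)-1,\ 4n-2i-2j+1,\ 4n-4i+1$ in type $B_n$. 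So $(i,j)$ and $(i,\overline{\jmath})$ are quantum roots, $(i,\overline{\imath})$ is quantum only for $i=n$ (where $s_{(n,\overline{n})}=s_n$ is simple, giving (3b)), and the type $C_n$ down-edges $t_{i\overline{\imath}}$, $i<n$, indeed disappear. Two inheritance claims you use implicitly also hold and deserve a sentence in a final write-up: running the same comparison inside type $C_n$ gives $\ell(s_{\varepsilon_i+\varepsilon_j})=4n-2i-2j+1<4n-2i-2j+3=2\langle\rho_C,(\varepsilon_i+\varepsilon_j)^\vee\rangle-1$, so type $C_n$ has no down-edges with that label and its condition (2) characterizes exactly the covers, which is what lets it transfer as your (2a); and the collapse of $\prec$ to $<$ there uses only $w(i)<w(\overline{\jmath})$.

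The one step you leave as a plan is (2b), and you have correctly identified both the obstacle and the method; for the record, it does close. A down-edge for a quantum root $\alpha$ means $\ell(ws_\alpha)=\ell(w)-\ell(s_\alpha)$, which by the identity $\ell(ws_\alpha)=\ell(w)+\ell(s_\alpha)-2\,|N(w)\cap N(s_\alpha)|$, with $N(x):=\{\beta\in\Phi^+ : x\beta\in\Phi^-\}$, is equivalent to the containment $N(s_\alpha)\subseteq N(w)$. For $\alpha=\varepsilon_i+\varepsilon_j$ one computes
\begin{equation*}
N(s_\alpha)=\{\varepsilon_i,\varepsilon_j,\varepsilon_i+\varepsilon_j\}\cup\{\varepsilon_i-\varepsilon_b : i<b\leq n,\ b\neq j\}\cup\{\varepsilon_j-\varepsilon_b,\ \varepsilon_i+\varepsilon_b,\ \varepsilon_j+\varepsilon_b : j<b\leq n\}\cup\{\varepsilon_a+\varepsilon_j : i<a<j\},
\end{equation*}
of cardinality $4n-2i-2j+1$ as you state. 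Requiring $w$ to send $\varepsilon_i$ and $\varepsilon_j$ to negative roots forces $w(i)$ and $w(j)$ barred, i.e.\ $sign(w(i))=-1$ and $sign(w(\overline{\jmath}))=+1$; translating the remaining memberships coordinate by coordinate (the roots $\varepsilon_i\pm\varepsilon_b$, $\varepsilon_j\pm\varepsilon_b$ probe the unbarred positions $b$ and, equivalently, the barred positions $\overline{b}$) yields exactly the statement that no position $k$ with $i<k<\overline{\jmath}$ carries $w(k)$ in the wrapped circular interval from $w(i)$ to $w(\overline{\jmath})$; the exemption $k\neq j$ appears because $\varepsilon_i-\varepsilon_j$ is orthogonal to $\alpha$, hence fixed by $s_\alpha$ and absent from $N(s_\alpha)$, while $\varepsilon_i+\varepsilon_j\in N(s_\alpha)$ imposes nothing beyond the sign conditions. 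With that verification added, your argument is a complete and efficient proof, shorter than a from-scratch case analysis because all of conditions (1), (2a), (3a) come for free from type $C_n$.
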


\begin{figure}[H]
\centering
\includegraphics[scale=.55]{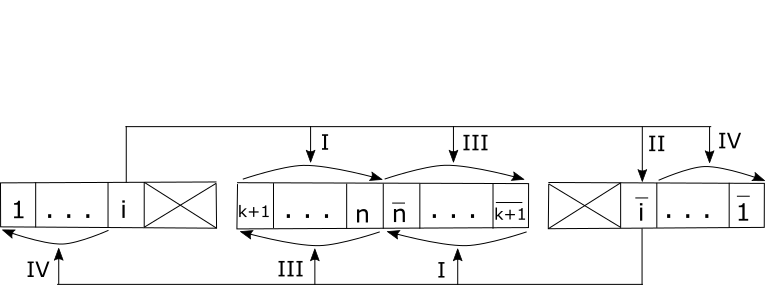}
\caption{A visualization of the four stages of roots in $\Gamma^{ki}$.}\label{stages_fig}
\end{figure}

Note that there are two major differences from the type $C_n$ quantum Bruhat graph criterion.  

\begin{enumerate}
\item Since the root $(n,\overline{n})$ is not in $\Gamma(k)$ for any $k<n$, we will never have the case $3b$.  This means that we lose the ability to apply the transposition $t_{i\overline{\imath}}$ when $w(\overline{\imath})<w(i)$.
\item In return, we gain some extra transpositions through $2b$, as there are now cases where the quantum Bruhat graph criterion allows for an arrow $\xrightarrow{(i,\overline{\jmath})}$ when $w(i)>w(\overline{\jmath})$.
\end{enumerate}

We now provide a description of the filling map in type $B_n$.  Note that it is the natural extension from type $C_n$.

\begin{definition}\label{deffillb}
Given a folding sequence $T$, we consider the signed permutations
$\pi^l_i:=T_{1}^lT_1^r\ldots T_{i-1}^lT_{i-1}^rT^l_i$, $\pi^r_i:=\pi^l_iT^r_i.$
Then the {\rm filling map} is the map ``$\mbox{fill\_B}$'' from folding sequences $T$ in $\mathcal{A}(\lambda)$ to fillings $\mbox{fill\_B}(T) = C^l_{1}C^r_{1}\ldots C^l_{\lambda_1}C^r_{\lambda_1}$ of the shape $2\lambda$, which are viewed as concatenations of columns; here  $C^l_i:=\pi^l_i[1,\lambda'_i]$ and $C^r_i:=\pi^r_i[1,\lambda'_i]$, for $i=1,\ldots,\lambda_1.$ 
We then define $\mbox{sfill\_B}: \mathcal{A}(\lambda)\rightarrow B^{\lambda'}$ to be the composition ``$\mbox{sort}\circ \mbox{fill\_B}$'', where ``{sort}'' reorders the entries of each column increasingly; here we represent a KR crystal $B^{k,1}$ as a {\rm split} (also known as {\rm doubled}) KN column of height $k$, see Section~{\rm \ref{type b kr section}}.
\end{definition}


\subsection{The type $B_n$ inverse map }

The main process remains similar to that of the type $C_n$ inverse map. However, the differences in the type $B_n$ quantum Bruhat graph, $B^{r,1}$ Kirillov-Reshetikhin crystals, and $KN$ columns add key new features.  The fact that type $B_n$ columns can contain the value $0$ was acknowledged in Section~\ref{type b kr section} with the revised algorithm $split\_B$.  We now address the difficulties associated to the QBG criterion and KR crystals.

\vspace{12pt} Recall that the KR crystals of column shape can be written as $B^{k,1}\cong B(\omega_k) \sqcup B(\omega_{k-2}) \sqcup B(\omega_{k-4})\sqcup\ldots$ where the elements of the set $B(\omega_r)$ are given by KN columns of height $r$.  This means that each $B^{k,1}$ contains columns of height less than $k$.  We need to extend them to full height $k$ so that the transpositions of the corresponding $\Gamma^l(k)\Gamma^r(k)$ may be correctly applied. The respective algorithm ``{\it extend}'' is given below.

\begin{algorithm}{\rm \cite{Briggs}}
Given a split column $(lC,rC)$ of length $1\leq r<n$ and $r\leq k<n$, append $\{\overline{\imath}_1<\ldots<\overline{\imath}_{r-k}\}$ to $lC$ and $\{i_1<\ldots<i_{r-k}\}$ to $rC$, where $i_1$ is the minimal value in $[\overline{n}]$ such that $i_1,\overline{\imath}_1\notin lC,rC$, and $i_t$ for $2\leq t\leq r-k$ is minimum value in $[\overline{n}]$ such that $i_t,\overline{\imath}_t\notin lC,rC$ and $i_t>i_{t-1}$.  Sort the extended columns increasingly. Let $(\widehat{lC},\widehat{rC})$ be the {\rm extended split column}.
\end{algorithm}

\begin{example}
The following is a $KN$ column $C$ of type $B_8$ with its split columns and then extended columns to a height of $6$.

\[
C=\begin{array}{l}\tableau{{5}\\{0}\\{\overline{8}}\\{\overline{5}}} \end{array} \!\begin{array}{c}  (lC,rC)= \end{array}\! \begin{array}{ll}\tableau{{ 4}\\{7}\\{ \overline{8}} \\ { \overline{5}}}\tableau{{ 5}\\{ \overline{8}}\\{ \overline{7}} \\ { \overline{4}}}  \end{array}\begin{array}{c}(\widehat{lC},\widehat{rC})= \end{array} \begin{array}{ll}\tableau{{ 4}\\{7}\\{ \overline{8}} \\ { \overline{5}}\\ { \overline{2}}\\ { \overline{1}}}\tableau{{1}\\{2}\\{ 5}\\{ \overline{8}}\\{ \overline{7}} \\ { \overline{4}}}  \end{array} 
\]

\end{example}

 Recall the type $B_n$  quantum Bruhat graph criterion (cf. Proposition~\ref{type B bruhat conditions}). The main differences from the type $C_n$ QBG are the loss of the ability to change negative to positive entries with the stage II roots, $(i,\overline{\imath})$, but gaining the ability to change negative to positive entries with the stage IV roots, $(i,\overline{\jmath})$. At first glance, this does not seem to hinder the path algorithm $Path\_C$ from Section~\ref{inverse map type C section}: if such a sign change is necessary, it is merely postponed.  However, while the $(i,\overline{\imath})$ root only changes the sign in position $i$, the $(i,\overline{\jmath})$ root changes the sign in position $j$ as well.  The subtle difference in the quantum Bruhat criterion makes both the reorder and path algorithms from type $C_n$ fail in type $B_n$.  We discuss two modifications to these algorithms, which depend on a certain pattern avoidance in two adjacent columns.

\begin{remark}\label{reason for mod remark} {\rm In types $A_{n-1}$ and $C_n$, the algorithm for forming the correct sequence of roots followed the rule that for the current word, $w$, and the root $(i,j)$, if $w(i)\prec wt_{ij}(i)\preceq C'(i)$, then add the root to the sequence, and otherwise do not and proceed.  We will call a transposition following the above inequality a \textit{Path\_C transposition}.  Further, if $w(i)\prec C'(i)\prec wt_{ij}(i)$, we will say that the transposition \textit{passes the target} (in row $i$). The original path algorithm has two underlying rules: one, we never apply a root which forces us to pass the target, and two, we always use a $Path\_C$ root.  In type $B_n$ (and, as we will later see, in type $D_n$ as well) there are exceptions to these two rules. Both exceptions come directly from the need to avoid the following pattern in two adjacent columns. }
\end{remark}

\begin{definition}\label{block-off def} We say that columns $C = (l_1,l_2,...,l_k)$ and $C' = (r_1,r_2,...,r_k)$ are {\rm blocked off at $i$ by $b:=r_i$} if and only if the following hold:
\begin{enumerate}
\item $ |l_i| \leq b < n$, where $|l_i| = b$ if and only if $l_i = \overline{b}$;
\item $\{1,2,...,b\}\subset \{|l_1|,|l_2|,...,|l_i|\}$ and $\{1,2,...,b\}\subset \{|r_1|,|r_2|,...,|r_i|\}$;
\item $|\{j : 1\leq j\leq i, l_j<0, r_j> 0\}|$ is odd.
\end{enumerate} 
\end{definition}

\begin{example}
 The following columns $CC'$ of height 5 with entries from $[\overline{8}]$ are blocked-off at 4 by 3:

$$
\begin{array}{l}\tableau{{1}&{1}\\{4}&{5}\\{\overline{2}}&{\overline{2}}\\{\overline{3}}&{3}\\{5}&{8}} \end{array}
$$
\end{example}

\vspace{12pt} We will find that if two columns (of length $k$) are blocked off at any $i\in [k-1]$, there will be no corresponding path in the quantum Bruhat graph between the columns. Therefore, given two columns $C$ and $C'$, we must not only avoid this pattern in the reordering of column $C'$, but also make sure that at any point in the aplication of roots in $T$, we never force the current column to be bocked off with $C'$ at any $i\in [k-1]$ either. The latter part will give way to the $Path\_B$ algorithm.

We now define the type $B_n$ versions of the {\it reorder} and {\it path} algorithms. Let $b:=b^l_1b^r_1\ldots b^l_{\lambda_1}b^r_{\lambda_1}=b_1\ldots b_{2\lambda_1}$ be extended split columns indexing a vertex of the crystal $B^{\lambda'}$ of type $B_n$. Similarly, let $\Gamma:=\Gamma^l_1\Gamma^r_1\ldots \Gamma^l_{\lambda_1}\Gamma^r_{\lambda_1}=\Gamma_1\ldots \Gamma_{2\lambda_1}$.  Algorithm~\ref{Mod-Reorder algorithm} takes $b,\Gamma$ as input and returns a reordered filling $C$ of a Young diagram of shape $2\lambda$.

\begin{algorithm}\label{Mod-Reorder algorithm}
(``ord\_B'')

 let $C_1:=b_1$;

 \hspace{6pt}for $i$ from $2$ to $2\lambda_1$ do

 \hspace{12pt} for $j$ from $1$ to $\lambda'_i-1$ do

 \hspace{18pt} let $C_i(j):=min_{\prec_{C_{i-1}(j)}}(b_i\setminus \{C_i(1),\ldots,C_i(j-1)\}$ so that $C_{i-1},C_{i}$ not blocked off at $j$)

 \hspace{12pt} end do;
 
 \hspace{12pt} let $C_i(\lambda'_i) := min_{\prec_{C_{i-1}(j)}}(b_i\setminus \{C_i(1),\ldots,C_i(\lambda'_i-1)\}$

 \hspace{6pt} end do;

return $C:=C_1\ldots C_{2\lambda_1}=C^l_1C^r_1\ldots C^l_{\lambda_1}C^r_{\lambda_1}.$
\end{algorithm}

\begin{example}{\rm 
Algorithm~\ref{Mod-Reorder algorithm} gives the filling $C$ from $b$ below. Note that Algorithm~\ref{Reorder algorithm} would have paired the $3$ with the $\overline{3}$ in the $4^{\rm{th}}$ row.  However, this would cause the two columns to be blocked off at $4$ by $3$, so the modified algorithm skips to the next value and pairs the $8$ with the $\overline{3}$ instead.

 $$b= \tableau{{1}\\{4}\\{\overline{2}}\\{\overline{3}}\\{5}}  \tableau{{1}\\{3}\\{5}\\{8}\\{\overline{2}}} \xrightarrow{ord\_B} \tableau{{1}\\{4}\\{\overline{2}}\\{\overline{3}}\\{5}}  \tableau{{1}\\{5}\\{\overline{2}}\\{8}\\{3}} =C$$
}\end{example}

The $``Path\_B''$ algorithm (Algorithm~\ref{Mod-Greedy algorithm}) takes the reordered, extended, split filling $C=C_1\ldots C_{2\lambda_1}$ given by Algorithm~\ref{Mod-Reorder algorithm}, and outputs a sequence of roots  \textit{Path\_B}$(C) = S\subset \Gamma$. We define $C_0$ to be the increasing column filled with $1,2,\ldots,n$.  Note that the major difference between the $Path\_B$ and $Path\_C$ algorithms is the addition of blocked-off avoidance.

\begin{algorithm}\label{Mod-Greedy algorithm}
(``Path\_B'')
 
  for $i$ from $1$ to $2\lambda_1$ do
 
 \hspace{6pt} let $S_i:=\emptyset$, $A := C_{i-1}$; 

 \hspace{6pt} for $(l,m)$ in $\Gamma_i$ do
 
 \hspace{12pt} if $(l,m)=(i,i+1)$ and $A,C_i$ are blocked off at $i$ by $C_i(i)$, then let $S_i:=S_i,(i,i+1)$, $A:=A(i,i+1)$;

 \hspace{12pt} elsif $A(l)\neq C_i(l)$ and $A(l)\prec A(m)\prec C_i(l)$ and $A(l,m),C_i$ not blocked off at $l$ by $C_i(l)$, then let $S_i:=S_i,(l,m)$, $A:=A(l,m)$;
 
 \hspace{12pt} end if;

 \hspace{6pt} end do;
 
 end do;

 return $S := S_1\ldots S_{2\lambda_1}=S_1^lS_1^r\ldots S_{\lambda_1}^l S_{\lambda_1}^r$.
\end{algorithm}

\begin{example}{\rm 
Consider the crystal $B^{(2,2)}$ of type $B_3$. Then $\lambda' = \lambda = (2,2)$ and $\Gamma = \Gamma(2)\Gamma(2)$.  Suppose that we have $\widehat{rC_1}=\tableau{{\overline{3}}\\{\overline{2}}\\{1}}$ and $\widehat{lC_2}=\tableau{{1}\\{3}\\{2}}$.  Algorithm~\ref{Mod-Greedy algorithm}  produces the following subset of $\Gamma^l(2)=\{(2,3),(2,\overline{2}),(2,\overline{3}),(2,\overline{1}),(1,3),(1,\overline{1}),(1,\overline{\vspace{-1.1mm}\vspace{-1.1mm}3})\}$: 
$$\begin{array}{l}\tableau{{\overline{3}}\\{\overline{2}}} \\ \\ \tableau{{ 1}} \end{array} \!
\begin{array}{c} \\ \xrightarrow{(2,3)} \end{array}\! 
\begin{array}{l}\tableau{{ \overline{3}}\\{ 1} \\ \\ {  \overline{2}}} \end{array} \begin{array}{c} \\ {\xrightarrow{(2,\overline{3})}} \end{array}\!
\begin{array}{l}\tableau{{ \overline{3}}\\{ 2} \\ \\ {  \overline{1}}} \end{array}
\begin{array}{c} \\ {\xrightarrow{(2,\overline{1})}} \end{array}\!
\begin{array}{l}\tableau{{ \overline{2}}\\{ 3} \\ \\ {  \overline{1}}} \end{array} 
\begin{array}{c} \\ {\xrightarrow{(1,\overline{3})}} \end{array}\!
\begin{array}{l}\tableau{{1}\\{ 3} \\ \\ { 2}} \end{array} 
  \, .$$
  
Notice that Algorithm~\ref{Greedy algorithm} would have called for the use of $(1,3)$ instead of $(1,\overline{3})$.  This would have caused the resulting word to be blocked off with $\widehat{lC_2}$ at $1$ by $1$ and thus the natural extension of algorithm $Path\_C$ to type $B_n$ would not have terminated correctly.
  }\end{example}

\begin{theorem}\label{main theorem}
The map ``$\mbox{path\_B}\circ \mbox{ord\_B}\circ \mbox{extend}\circ \mbox{split\_B}$'' is the inverse of the type $B_n$  $``sfill\_B''$ map.
\end{theorem}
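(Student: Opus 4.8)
The plan is to show that the composite map, which I abbreviate $\Phi := \mbox{path\_B}\circ \mbox{ord\_B}\circ \mbox{extend}\circ \mbox{split\_B}$, is a \emph{left} inverse of $sfill\_B$. Since $sfill\_B$ is a bijection (it realizes the crystal isomorphism $\mathcal{A}(\lambda)\cong B^{\lambda'}$, exactly as recorded for type $C_n$), a one-sided inverse is automatically two-sided, so it suffices to prove $\Phi\circ sfill\_B=\mathrm{id}_{\mathcal{A}(\lambda)}$. Fixing $T\in\mathcal{A}(\lambda)$ and setting $C:=\mbox{fill\_B}(T)$, the unsorted, full-height, split filling of shape $2\lambda$, so that $sfill\_B(T)=\mathrm{sort}(C)$ read as an element of $B^{\lambda'}$, I would reduce the theorem to two lemmas. \emph{(Unsorting.)} $\mbox{ord\_B}\bigl(\mbox{extend}(\mbox{split\_B}(sfill\_B(T)))\bigr)=C$. \emph{(Path recovery.)} $\mbox{path\_B}(C)=T$. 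Granting both, $\Phi(sfill\_B(T))=\mbox{path\_B}(C)=T$, which is the claim. This follows the template of types $A_{n-1}$ and $C_n$, where the two ingredients were that the reorder algorithm undoes the sorting and that the path algorithm recovers the folding sequence from the \emph{unsorted} filling.

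For the unsorting lemma, the first task is to show that $\mbox{extend}\circ\mbox{split\_B}$ restores the full-height split columns: when $sfill\_B(T)$ is read as an element of $B^{\lambda'}$, a sorted column may collapse to a KN column of height $<\lambda'_i$, and $\mbox{split\_B}$ together with $\mbox{extend}$ is designed precisely to recover $\mathrm{sort}(C)$ in its full-height split form, which I would verify directly from Definition~\ref{type C splitting} and the extension rule, checking that the entries appended by $\mbox{extend}$ match those introduced by $\mbox{fill\_B}$. It then remains to prove that $\mbox{ord\_B}$ inverts the sort, i.e. $\mbox{ord\_B}(\mathrm{sort}(C))=C$. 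This needs two facts: that consecutive columns of $C$ are never blocked off in the sense of Definition~\ref{block-off def}, so the blocked-off-avoidance clause of Algorithm~\ref{Mod-Reorder algorithm} never conflicts with the genuine $\mbox{fill\_B}$ ordering; and that the greedy circle-order pairing then reconstructs that ordering uniquely, as in type $C_n$. The no-blocked-off property of $\mbox{fill\_B}(T)$ I would deduce from the structural claim stated below.

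The path-recovery lemma is the heart of the proof and is where type $B_n$ genuinely departs from $C_n$. Parsing $\Gamma$ segment by segment, one shows that on each factor $\Gamma^l_i$, $\Gamma^r_i$ the algorithm reconstructs $T^l_i$, $T^r_i$. The type $C_n$ argument reconstructs exactly the roots that are $Path\_C$ transpositions not passing the target; the type $B_n$ quantum Bruhat graph (Proposition~\ref{type B bruhat conditions}) differs by the loss of case $3b$ and the gain of case $2b$, as summarized in Remark~\ref{reason for mod remark}. I would show that the two deviations built into Algorithm~\ref{Mod-Greedy algorithm}---the special treatment of the root $(i,i+1)$ and the blocked-off-avoidance in the \texttt{elsif} branch---compensate exactly for this: a sign change that type $C_n$ would realize through a stage II root $(i,\overline{\imath})$, but which is now forbidden, is correctly postponed and realized instead through a stage IV root $(i,\overline{\jmath})$, and the blocked-off condition is precisely the criterion detecting when such a substitution is forced. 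The delicacy here is that $(i,\overline{\jmath})$ flips signs in \emph{two} positions, $i$ and $j$, so one must track its effect on both rows against the target column.

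The main obstacle will be the structural claim underlying both lemmas: \emph{two columns of length $k$ that are blocked off at some $i\in[k-1]$ admit no path in the type $B_n$ quantum Bruhat graph between them.} This both guarantees that $\mbox{path\_B}$ never stalls (so the blocked-off avoidance is well defined and forces the corrective roots) and supplies the no-blocked-off property of $\mbox{fill\_B}(T)$ used above. I expect its proof to require a careful case analysis on Proposition~\ref{type B bruhat conditions}, weighing the parity condition in Definition~\ref{block-off def}(3)---an \emph{odd} number of rows $j$ with $l_j<0<r_j$---against the fact that each admissible stage IV root $(i,\overline{\jmath})$ flips signs in pairs. Once the initial segment $\{1,\ldots,b\}$ has been saturated among the absolute values, as required by Definition~\ref{block-off def}(2), the available type $B_n$ edges can only alter signs two at a time within that segment, so an odd discrepancy can never be resolved; converting this parity obstruction into a rigorous nonexistence-of-path statement is the technical core of the argument.
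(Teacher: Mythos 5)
Your plan follows essentially the same route as the paper's proof. The split into an ``unsorting'' lemma and a ``path recovery'' lemma is exactly the paper's pairing of Remark~\ref{main theorem remark} with Proposition~\ref{main prop} (which in turn rests on Propositions~\ref{Total Path Prop} and~\ref{SER prop}); your central structural claim --- that blocked-off columns admit no connecting path, proved by a parity count of $-+$ sign pairs against the fact that admissible stage IV roots flip signs two at a time --- is precisely Lemma~\ref{no path for block off}, with precisely its proof mechanism; and your analysis of the two deviations of $Path\_B$ from $Path\_C$ corresponds to Lemmas~\ref{skip implies block off} and~\ref{twice implies block off}. One small precision: the structural claim must be stated as the nonexistence of a path whose edge labels form a subsequence of $\Gamma^{ki}$ (never passing the target), as in Lemma~\ref{no path for block off}; the quantum Bruhat graph itself is strongly connected, so unrestricted paths between the two signed permutations always exist.

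The genuine gap is in your opening reduction. You presuppose that $sfill\_B$ is a bijection, ``exactly as recorded for type $C_n$.'' For type $B_n$ no such statement is recorded in the paper, and it cannot be presupposed: unlike types $A_{n-1}$ and $C_n$, where the isomorphism theorems cite prior work, the bijectivity of $sfill\_B$ is part of the content of Theorem~\ref{main theorem} itself, so assuming it is circular. Moreover, your two lemmas quantify only over elements of the form $sfill\_B(T)$, so by themselves they prove only $\Phi\circ sfill\_B=\mathrm{id}_{\mathcal{A}(\lambda)}$, i.e.\ injectivity of $sfill\_B$; without surjectivity this does not make $\Phi$ a two-sided inverse. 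Two repairs are available. Either invoke the theorem cited from \cite{LNSSS 2016} at the end of Section 2, which gives $|\mathcal{A}(\lambda)|=|B^{\lambda'}|$, so that injectivity between finite sets of equal cardinality yields bijectivity; or do what the paper does and prove the existence (``sufficiency'') half of Proposition~\ref{main prop} --- every filling of shape $2\lambda$ satisfying conditions (i)--(iii) is $fill\_B$ of an admissible subset --- which, combined with the uniqueness statement of Remark~\ref{main theorem remark}, yields the right-inverse identity $sfill\_B\circ\Phi=\mathrm{id}_{B^{\lambda'}}$ directly. Your plan deliberately skips this existence direction, so as written it must incorporate the cardinality argument to close; with that one-line addition the architecture is sound.
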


The proof of Theorem~\ref{main theorem} is based off of the following proposition.  First we consider the following conditions on a pair of adjacent columns $CC'$ in a filling of a Young diagram.

\begin{conditions}\label{nec conditions for B columns}

For any pair of indices $1 \leq i < l \leq k$,
\begin{enumerate}

\item  $C(i)\neq C'(l)$

\item  $ C(i) \prec C'(l) \prec C'(i)$  only if the columns $C$ and $C't_{il}$ are blocked off at $i$ by $C'(l)$

\item $CC'$ are not blocked off at $i$ by $C'[i]$ for any $1\leq i <k$
\end{enumerate}
\end{conditions}

\begin{remark}\label{main theorem remark}
{\rm Let $b$ be a filling in $B^{\lambda'}$, represented with split columns (as a filling of the shape $2\lambda$).  Then Algorithm~\ref{Mod-Reorder algorithm} constructs the unique filling $\sigma$ satisfying the following conditions: (i) the first column of $\sigma$ (of height $\lambda'_1$) is increasing, (ii) the adjacent columns of $\sigma$ satisfy Conditions~\ref{nec conditions for B columns}, and (iii) $sort(\sigma)=b$.}

\end{remark}

\begin{proposition}\label{main prop}
The restriction of the filling map $``fill\_B''$ to  $\mathcal{A}(\lambda)$ is injective.  The image of this restriction consists of all fillings   $C_1^l C_1^r \hdots C_l^{\lambda_1} C_r^{\lambda_1}$  of the shape $2\lambda$ with integers in $[\overline{n}]$ satisfying the following conditions: $(i)$ $C_l^1$ is increasing; $(ii)$ $(ord\_B(C_l^j),ord\_B(C_r^j))=(lD^j,rD^j)$ for some $KN$ column $D^j$, for all $j$; $(iii)$ any two adjacent columns satisfy  Conditions~\ref{nec conditions for B columns}.
\end{proposition}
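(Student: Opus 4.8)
The plan is to establish the two assertions of Proposition~\ref{main prop} separately: injectivity of $\mbox{fill\_B}$ restricted to $\mathcal{A}(\lambda)$, and the precise characterization of its image. Since $\mbox{sfill\_B} = \mbox{sort}\circ\mbox{fill\_B}$ is already an affine crystal isomorphism (hence a bijection) onto $B^{\lambda'}$, the content here is to understand what $\mbox{sort}$ collapses and to show that the reorder/path algorithms exactly undo $\mbox{fill\_B}$. My strategy throughout is to treat the columns of an admissible folding as initial segments of the signed permutations $\pi^l_i,\pi^r_i$, and to read off from the quantum Bruhat graph criterion (Proposition~\ref{type B bruhat conditions}) precisely which orderings of adjacent columns can arise.

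First I would prove that every filling in the image satisfies conditions $(i)$--$(iii)$. Condition $(i)$ is immediate since $\pi^l_1 = T^l_1$ starts from the identity and the first column is an initial segment reordered only by the admissibility path; $(ii)$ follows from the fact that after sorting, each pair $(C^l_j,C^r_j)$ must be a valid split column, which is guaranteed because $\mbox{sfill\_B}$ lands in $B^{\lambda'}$ and split columns are characterized by Definition~\ref{type C splitting} (extended via $\mbox{split\_B}$ to absorb zeros). The crux is $(iii)$: I would show that an edge $w\xrightarrow{(l,m)}w(l,m)$ in the quantum Bruhat graph, read through the criterion, forces adjacent columns of $\mbox{fill\_B}(T)$ to satisfy Conditions~\ref{nec conditions for B columns}. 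In particular the non-blocking condition $(3)$ should follow from the observation, flagged in the text preceding Definition~\ref{block-off def}, that a blocked-off configuration at some $i\in[k-1]$ admits no path in $\mbox{QBG}(W)$ between the columns; the parity condition $(3)$ in Definition~\ref{block-off def} encodes exactly the obstruction coming from the loss of case $3b$ and the gain of the sign-changing stage IV roots of case $2b$.

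Next I would prove the converse, that every filling satisfying $(i)$--$(iii)$ lies in the image, together with injectivity. Here I would invoke Remark~\ref{main theorem remark}: given such a filling $\sigma$, the conditions pin down $\sigma$ uniquely among all fillings that sort to a fixed $b\in B^{\lambda'}$ and satisfy Conditions~\ref{nec conditions for B columns} with increasing first column. This tells me the map $b\mapsto\mbox{ord\_B}\circ\mbox{extend}\circ\mbox{split\_B}(b)$ produces exactly the fillings in the claimed image set, so the image set is in bijection with $B^{\lambda'}$. I would then run $\mbox{Path\_B}$ on such a $\sigma$ and verify, column segment by column segment and stage by stage, that the greedy choices build a legitimate admissible path whose associated folding sequence $T$ satisfies $\mbox{fill\_B}(T)=\sigma$; the blocked-off avoidance built into Algorithm~\ref{Mod-Greedy algorithm} is what keeps each partial word from entering a configuration with no continuing edge. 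Injectivity then follows because distinct admissible foldings give distinct $\pi^l_i,\pi^r_i$, hence distinct initial segments, hence distinct fillings once we know the blocking conditions force a canonical reading.

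The main obstacle I anticipate is the verification in condition $(iii)$ and in the $\mbox{Path\_B}$ correctness argument of the interaction between the two special features of type $B_n$: the stage~II root $(i,\overline{\imath})$ can no longer flip a negative entry back to positive (loss of case $3b$), while a stage~IV root $(i,\overline{\jmath})$ now can, but it simultaneously changes the sign in position $j$ (case $2b$). Tracking this coupled two-position sign change through the greedy algorithm is delicate, because postponing a sign correction via $(i,\overline{\jmath})$ can retroactively create a blocked-off pattern at an earlier index, which is precisely why both the ``pass the target'' exception (the $(i,i+1)$ clause) and the blocked-off test appear in Algorithm~\ref{Mod-Greedy algorithm}. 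I expect the heart of the proof to be a careful case analysis, organized by the four stages of $\Gamma^{ki}$ (cf. Figure~\ref{stages_fig}), showing that the parity condition of Definition~\ref{block-off def} is exactly the invariant preserved by admissible paths and violated by the configurations that $\mbox{QBG}(W)$ cannot realize.
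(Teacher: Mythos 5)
Your overall route is the same as the paper's: read Conditions~\ref{nec conditions for B columns} off the quantum Bruhat graph criterion, and build a unique path column by column. The paper packages exactly this into two if-and-only-if statements with uniqueness---Proposition~\ref{Total Path Prop} for the $\Gamma^l(k)$ segments and Proposition~\ref{SER prop} for the $\Gamma^r(k)$ segments---and then proves Proposition~\ref{main prop} by applying them alternately, starting from the identity permutation. The problem is that your proposal is circular at a key point. You begin by assuming that $\mbox{sfill\_B}=\mbox{sort}\circ\mbox{fill\_B}$ ``is already an affine crystal isomorphism (hence a bijection) onto $B^{\lambda'}$,'' and you use this assumption essentially: condition $(ii)$ of the image characterization is deduced ``because $\mbox{sfill\_B}$ lands in $B^{\lambda'}$.'' For type $B_n$ no such theorem is available at this point: unlike types $A_{n-1}$ and $C_n$, where the analogous statements are quoted from \cite{Lenart 2012,Lenart Lubovsky 2015b}, the bijectivity of $\mbox{sfill\_B}$ (equivalently, that the sorted columns of $\mbox{fill\_B}(T)$ are genuinely split KN columns, and that no two admissible foldings produce the same sorted filling) is precisely what Proposition~\ref{main prop} and Theorem~\ref{main theorem} establish; it is the paper's new contribution in this type. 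The statement you need for $(ii)$---that a QBG path whose labels form a subsequence of $\Gamma^r(k)$ forces $(ord\_B(C_l^j),ord\_B(C_r^j))$ to be a split, extended, reordered KN column---is the hard content of Section~\ref{L2R} (Propositions~\ref{Conditions are KN columns} and~\ref{SER prop}) and cannot be obtained from the asserted codomain of $\mbox{sfill\_B}$.

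Second, your injectivity argument contains a false step: ``distinct admissible foldings give distinct $\pi^l_i,\pi^r_i$, hence distinct initial segments'' is not a valid implication, since two distinct signed permutations can agree in their first $\lambda'_i$ entries---this is exactly the information the filling map discards, and why injectivity is nontrivial here. The correct mechanism, which the paper uses, is the uniqueness clause in Propositions~\ref{Total Path Prop} and~\ref{SER prop}: the filling determines, segment by segment, a unique QBG path and hence a unique folding sequence $T$, so two admissible foldings with the same image under $\mbox{fill\_B}$ must coincide. Your phrase about the blocking conditions ``forcing a canonical reading'' points in this direction, but as written the argument passes through a wrong intermediate claim. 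Repairing these two points---deriving $(ii)$ from the path structure rather than from the assumed codomain, and deriving injectivity from path uniqueness---turns your plan into essentially the paper's proof.
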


The proof of Proposition~\ref{main prop} is based on the following two results, which will be proved in Sections~\ref{R2nextL} and~\ref{L2R} respectively.

\vspace{12pt} \begin{proposition}\label{Total Path Prop}  
Consider a signed permutation $u$ in $B_n$, the column $C:=u[1,k]$, and another column $C'$ of height $k$.  The pair of columns $C'C$ satisfies Conditions~\ref{nec conditions for B columns} if and only if there is a path $u = u_0,u_1,\hdots,u_p=v$ in the corresponding quantum Bruhat graph such that $v[1,k] = C'$ and the edge labels form a subsequence of $\Gamma_l(k)$.  Moreover, the mentioned path is unique, and we have $$C(i) = u_0(i)\preceq u_1(i)\preceq\hdots\preceq u_p(i) = C'(i), \text{ for } i = 1,\hdots,k.$$

\end{proposition}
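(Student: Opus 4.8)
The plan is to prove the biconditional in both directions, establishing uniqueness and the monotonicity chain along the way. I would start by analyzing the structure of $\Gamma^l(k)$: it consists of the stages I--IV of roots $\Gamma^{ki}$ for $i$ running from $k$ down to $1$, and at stage $i$ the active roots all have the form $(i,\cdot)$, so they only modify the entry in position $i$ (together with the conjugate position). This gives a natural ``row-by-row'' processing order, which matches the fact that $\Gamma^l(k)$ fills in the target entries $C'(i)$ one position $i$ at a time, from $i=k$ down to $i=1$. The monotonicity claim $u_0(i)\preceq u_1(i)\preceq\cdots\preceq u_p(i)=C'(i)$ should follow almost by construction once the correct path is identified, since each admissible step in the quantum Bruhat graph (via Proposition~\ref{type B bruhat conditions}) advances the relevant coordinate in the circular order $\prec_{C(i)}$ without ever passing the target $C'(i)$; this is precisely the ``$Path\_C$ transposition'' rule refined by blocked-off avoidance in Remark~\ref{reason for mod remark}.

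For the direction ``Conditions $\Rightarrow$ path exists,'' I would construct the path explicitly by running an algorithm in the spirit of $Path\_B$ (Algorithm~\ref{Mod-Greedy algorithm}), processing the roots of $\Gamma^l(k)$ in order and applying exactly those that are $Path\_C$ transpositions or that resolve a blocked-off configuration at position $i$ via $(i,i+1)$. The key is to verify, using Conditions~\ref{nec conditions for B columns}, that every applied transposition is a genuine edge of the quantum Bruhat graph: condition (1) guarantees we never need to fix an entry already equal to its target; condition (2) controls the one exceptional situation where we must ``pass the target,'' and it is precisely calibrated so that this only happens when $C$ and $C't_{il}$ are blocked off, forcing the preliminary $(i,i+1)$ swap; and condition (3) guarantees the running column is never itself blocked off with $C'$, so the process terminates at a column whose first $k$ entries equal $C'$. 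The main technical burden here is checking the quantum Bruhat edge criterion of Proposition~\ref{type B bruhat conditions} case-by-case as the root type changes across stages I--IV, paying special attention to case $2b$ (the sign-changing $(i,\overline{\jmath})$ edges), which is where type $B_n$ departs from type $C_n$.

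For the converse ``path exists $\Rightarrow$ Conditions hold,'' I would argue contrapositively: if one of Conditions~\ref{nec conditions for B columns} fails, then no path in the quantum Bruhat graph with edge labels forming a subsequence of $\Gamma^l(k)$ can connect the two columns. The crucial ingredient is the claim foreshadowed just before Algorithm~\ref{Mod-Reorder algorithm}, namely that a blocked-off configuration at some $i\in[k-1]$ is an absolute obstruction to a connecting path (this is where the parity condition (3) of Definition~\ref{block-off def} enters: an odd number of sign discrepancies cannot be reconciled by the available roots, since stage II can no longer flip signs the way it did in type $C_n$). Uniqueness of the path should then follow from the monotonicity chain together with the observation that at each position $i$ the sequence of intermediate values is forced: there is no freedom in which entry lands in position $i$ at stage $i$, because the target $C'(i)$ and the circular order $\prec_{C(i)}$ determine the admissible step uniquely.

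\textbf{Main obstacle.} I expect the hard part to be the converse direction, specifically proving rigorously that being blocked off at some $i$ genuinely obstructs \emph{every} path whose labels lie in $\Gamma^l(k)$, rather than merely obstructing the greedy one produced by $Path\_B$. This requires an invariant argument: I would track the parity of the quantity $|\{j : 1\le j\le i,\ \text{entry}_j<0,\ C'(j)>0\}|$ and show that the roots available in $\Gamma^l(k)$ cannot change this parity once positions $1,\dots,i$ are ``saturated'' in the sense of condition (2) of Definition~\ref{block-off def}, because every sign change at a position $\le i$ must be paired (the stage IV root $(i,\overline{\jmath})$ flips two signs at once), while the only unpaired sign-flip mechanism, the stage II root $(i,\overline{\imath})$, is unavailable precisely when $w(\overline{\imath})<w(i)$ and $i<n$ (case $3b$ is absent, per Proposition~\ref{type B bruhat conditions}). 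Making this parity invariant precise and showing it is preserved by all admissible moves is the technical crux.
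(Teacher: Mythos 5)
Your proposal follows essentially the same route as the paper: a greedy row-by-row construction of the path (the paper's procedure \emph{path-B}, processing positions $i=k$ down to $1$ through stages I--IV), necessity of the conditions via analysis of any hypothetical path, and uniqueness from the forced $Path\_C$ moves together with the blocked-off exceptions. The technical crux you single out --- a parity invariant showing that a blocked-off configuration at $i$ obstructs \emph{every} path with labels in $\Gamma^l(k)$, because stage IV roots flip two signs at once while the type $C_n$ stage II sign-flip is unavailable --- is exactly the content of the paper's Lemma~\ref{no path for block off}, so the approaches coincide.
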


To prove Proposition~\ref{Total Path Prop}, we will first determine the necessary conditions for a path in the quantum Bruhat graph to exist and then construct a path using a more detailed version of Algorithm~\ref{Mod-Greedy algorithm}.

\vspace{12pt} \begin{proposition}\label{SER prop}

Consider a signed permutation $u$ in $B_n$, the column $C:=u[1,k]$, and another column $C'$ of height $k$.  The pair of columns $CC'$ satisfy Conditions~\ref{nec conditions for B columns} and $(mod\_ord(C),mod\_ord(C')) = (rD,lD)$ for some $KN$ column $D$ if and only if there is a path $u=u_0,u_1,\hdots, u_p = v$ in the corresponding quantum Bruhat graph such that $v[1,k]=C'$ and the edge labels form a subsequence of $\Gamma_r(k)$.  Moreover, the mentioned path is unique and, for each $i=1,\hdots, k$, we have the following weakly increasing sequence: 
$$C(i) = u_0(i)\preceq u_1(i)\preceq \hdots \preceq u_p(i) = C'(i).$$
\end{proposition}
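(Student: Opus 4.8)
The plan is to prove the two implications of the stated equivalence separately, then record uniqueness and the monotonicity chain, closely paralleling the proof of Proposition~\ref{Total Path Prop} but exploiting the special shape of the right chain: every root of $\Gamma^r(k)=\Gamma^k\ldots\Gamma^2$ is of the form $(i,\overline{\jmath})$ with $1\le j<i\le k$, i.e.\ a root $\varepsilon_i+\varepsilon_j$. Such a reflection acts on the window by sending the entries in positions $i$ and $j$ to $\overline{w(j)}$ and $\overline{w(i)}$, so the right segment can rearrange entries only through sign-coupled swaps within the first $k$ positions. The first observation I would record is that this is precisely the kind of move relating the two split columns of a fixed $D$ in Definition~\ref{type C splitting}: the two split columns differ exactly at the $(z,\overline z)$ pairs, and passing from one to the other replaces these by the auxiliary letters $t$, moving each affected entry forward in the circular order. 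Thus a single $(i,\overline{\jmath})$ edge accomplishes one such monotone step, which is the geometric content tying the $\Gamma^r(k)$-paths to the splitting relation $(\mathrm{mod\_ord}(C),\mathrm{mod\_ord}(C'))=(rD,lD)$.

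For sufficiency (Conditions~\ref{nec conditions for B columns} together with the split relation $\Rightarrow$ a path exists), I would run the restriction of Algorithm~\ref{Mod-Greedy algorithm} to the right factor $\Gamma^r(k)$ and verify three things at each step: that the proposed transposition is a genuine edge of the type-$B$ quantum Bruhat graph via Proposition~\ref{type B bruhat conditions}(2), in particular the new case 2(b), which is exactly what permits an $(i,\overline{\jmath})$ edge when the source entry is negative and the target positive; that the entry in each fixed position $i$ moves weakly forward in the order $\prec_{C(i)}$, yielding the chain $u_0(i)\preceq\cdots\preceq u_p(i)$; and that no intermediate column becomes blocked off with $C'$ at any index $i<k$, which is guaranteed by Condition (3) together with the blocked-off guard built into the algorithm. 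The split structure is used to certify that every root selected actually lies in $\Gamma^r(k)$ and that the algorithm terminates with $u_p[1,k]=C'$.

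For necessity (a $\Gamma^r(k)$-path $\Rightarrow$ Conditions and the split relation), I would argue in reverse. Since only $(i,\overline{\jmath})$ roots with $j<i\le k$ occur, both the set of positions whose entries ever change and the manner in which they change are tightly constrained, and the edge criterion forces the monotone behavior $u_0(i)\preceq\cdots\preceq u_p(i)$. Reading off $u_0=C$ and $u_p=C'$ then yields Conditions~\ref{nec conditions for B columns}(1)--(3) directly, while the coupling of sign changes in the $\varepsilon_i+\varepsilon_j$ moves forces the changed entries to appear in $(z,\overline z)$-pairs with exactly the letter-replacement pattern of Definition~\ref{type C splitting}; this identifies a (necessarily unique) KN column $D$ with $(\mathrm{mod\_ord}(C),\mathrm{mod\_ord}(C'))=(rD,lD)$.

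The main obstacle I anticipate is bookkeeping the parity condition in Definition~\ref{block-off def}(3) across the path. Because each $\varepsilon_i+\varepsilon_j$ reflection flips the signs of two window entries simultaneously, the count $|\{j:l_j<0,\ r_j>0\}|$ can jump in several ways in a single step, and one must show that the greedy choice never produces a blocked-off configuration at an intermediate index while still reaching $C'$. Establishing that this constraint forces a single admissible sequence of roots, with no branching, is the delicate heart of the argument and is where case 2(b) of Proposition~\ref{type B bruhat conditions} and the guard in Algorithm~\ref{Mod-Greedy algorithm} must be reconciled; it simultaneously delivers the uniqueness claim. The monotonicity conclusion $C(i)=u_0(i)\preceq\cdots\preceq u_p(i)=C'(i)$ then falls out as a byproduct of the step-by-step verification, exactly as in Proposition~\ref{Total Path Prop}.
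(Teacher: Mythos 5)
Your path-analysis skeleton (run the greedy algorithm restricted to $\Gamma^r(k)$, check edges via Proposition~\ref{type B bruhat conditions}(2), track monotonicity and blocked-off avoidance) does parallel one half of the paper's argument, namely its Lemma asserting that a certain list of combinatorial conditions on the pair $CC'$ is equivalent to the existence of a unique $\Gamma_r(k)$-path. But there is a genuine gap: your proposal never actually proves the bridge between the splitting/extension/reordering algorithms and those combinatorial conditions, and this bridge is the bulk of the paper's proof. Concretely, in the necessity direction you assert that the sign-coupled $\varepsilon_i+\varepsilon_j$ swaps ``force the changed entries to appear in $(z,\overline z)$-pairs with exactly the letter-replacement pattern of Definition~\ref{type C splitting}.'' That pattern is not just ``some sign-coupled replacement'': the splitting algorithm demands that each replacement letter $t_i$ be the \emph{greatest} available letter below $z_i$ absent from the column, the type $B_n$ extension demands \emph{minimal} available letters, zeros are split into $(a,\overline a)$ pairs, and the whole thing is then scrambled by $ord\_B$. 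Deducing these extremality properties from the edge criterion requires an intermediate structural characterization of the pairs reachable by $\Gamma_r(k)$-paths (in the paper: same absolute-value sets, $int(C,C')=\emptyset$, sign monotonicity plus an even number of $-+$ pairs, i.e.\ Conditions~\ref{conditions SER}(1)--(3)), and then a purely combinatorial theorem identifying pairs satisfying those conditions with split, extended, reordered KN columns. The paper gets this from the machinery of initial, corrected, and reordered matchings (Definitions~\ref{initial matching def}--\ref{reorder matching}, Lemmas~\ref{not block off avoiding reorder step}--\ref{block off avoiding reorder step}, Propositions~\ref{l2r reorder follows all conditions} and~\ref{Conditions are KN columns}, the last resting on Theorem 4.4 of \cite{Briggs}); nothing in your sketch substitutes for it. The same omission undermines your sufficiency direction, where ``the split structure is used to certify that every root selected actually lies in $\Gamma^r(k)$'' is exactly the assertion that needs those structural conditions: it is Condition (2) of Conditions~\ref{conditions SER} that rules out reflections $(i,m)$, $(i,\overline{m})$ with $m>k$, Condition (1) that forces termination, and the parity half of Condition (3) that rules out needing $(k,k+1)$.

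A secondary point: you flag the parity bookkeeping of Definition~\ref{block-off def}(3) as the delicate heart of the argument. In the paper that bookkeeping is handled by lemmas already established for Proposition~\ref{Total Path Prop} (notably Lemma~\ref{no block off above ith row}) together with the even-parity clause of Conditions~\ref{conditions SER}(3); the genuinely hard content specific to this proposition is the classification of split extended reordered columns described above, which your proposal treats as immediate.
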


To prove Proposition~\ref{SER prop}, we will first classify the split, extended and reordered KN columns, and then show that the resulting columns are exactly the necessary conditions for constructing a path in the quantum Bruhat graph.

\begin{proof}(of Proposition~\ref{main prop})
Consider a filling $\sigma = C_l^1C_r^1\hdots C_l^{\lambda'_1}C_r^{\lambda'_1}$ satisfying conditions (i)-(iii), and let $u$ be the identity permutation.  Apply Proposition~\ref{Total Path Prop} to $u$ and the culumn $C_l^1$; then set $u$ to the output signed permutation $v$, and apply Proposition~\ref{SER prop} to $u$ and the column $C_r^1$.  Continue in the way, by considering the columns $C_l^2,C_r^2,\hdots ,C_l^{\lambda'_1},C_r^{\lambda'_1}$, while each time setting $u$ to be the previously obtained $v$.  This procedure constructs the unique folding pair $(w,T)$ mapped to $\sigma$ by the filling map $fill\_B$. Viceversa, a similar application of the two propositions shows that any filling in the image of $fill\_B$ satisfies conditions (i)-(iii).

\end{proof}

\begin{proof}(of Theorem~\ref{main theorem})
This in now immediate, based on Remark~\ref{main theorem remark} and Proposition~\ref{main prop}.
\end{proof}

\section{Proof of Proposition~\ref{Total Path Prop}}\label{R2nextL}
\subsection{Necessary conditions for reordered columns}

Let $C$ and $C'$ be two columns of height $k$ with entries from $[\overline{n}]$ where each column cannot contain both $i$ and $\overline{\imath}$ for $1\leq i\leq n$.  Recall Algorithm~\ref{Mod-Reorder algorithm} (the type $B_n$ reorder algorithm) and note that for each such pair of columns, it reorders the second based on the first in a way that is consistant with Conditions~\ref{nec conditions for B columns}.  In this section, we show that the first two parts of Conditions~\ref{nec conditions for B columns} are necessary for the existance of a path between the two columns in the quantum Bruhat graph.
The necessity for the third condition will be realized in Section~\ref{constructing segment of path in B}.

We begin by recalling Conditions~\ref{nec conditions for B columns} on adjacent columns $C C'$ where we now write the second condition in an equivalent way which will be more beneficial in this section.

\vspace{12pt}

For any pair of indices $1 \leq i < l \leq k$,
\begin{enumerate}

\item  $C(i)\neq C'(l)$

\item the statement $ C(i) \prec C'(l) \prec C'(i)$ is false unless the following are true.

\begin{enumerate}
\item $\{1,2,\hdots,|C'(l)|\} \subseteq \{|C(j)|\}_{1\leq j \leq i}$ and $\{1,2,\hdots,|C'(l)|-1\} \subseteq \{|C'(j)|\}_{1\leq j \leq i-1}$
\item $ C'(l)>0$ and $|C(i)|\leq C'(l)$ with equality iff $C(i) = \overline{C'(l)}$,
\item and $p'$ is odd, where $p=\#\{j : 1\leq j\leq i-1, C(j)<0, C'(j)> 0\}$ and $p' = p+1$ if $C(i)<0$ and $p' = p$ otherwise.
\end{enumerate} 

\item $C,C'$ are not blocked off at $i$ by $C'[i]$ for any $1\leq i <k$
\end{enumerate}


\vspace{12pt} The following lemma will show that for a given path in the quantum Bruhat graph, the only possible transposition in $T_i$ that can pass the target value $C'(i)$ is $(k,k+1)$.  It also gives us the monotonicity of the values in position $i$ during the application of  transpositions of $T_i$.

\begin{lemma}\label{row monotonicity}  
For $T_i = (i,j_1)(i,j_2)\hdots (i,j_q)$, some $1\leq i \leq k$, let $u_0:=\pi_{i+1}$ and $ u_r := \pi_{i+1}t_{i,j_1}t_{i,j_2}\hdots t_{i,j_r}$ for $1\leq r\leq q$. Then $u_r(i)\prec u_{r+1}(i)\preceq C'(i)$ for $r=1,\hdots, q-1$, while for $r=0$ this fails only if $(i,j_1) = (k,k+1)$.
\end{lemma}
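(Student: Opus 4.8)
The plan is to analyze the sequence $u_0,u_1,\ldots,u_q$ one edge at a time, reading off from the type $B_n$ quantum Bruhat graph criterion (Proposition~\ref{type B bruhat conditions}) exactly how the entry in position $i$ changes when each root $(i,j_{r+1})$ is applied to $u_r$. Recall that right multiplication gives $u_{r+1}(i)=u_r(j_{r+1})$ for a root of the form $(i,j)$, $u_{r+1}(i)=\overline{u_r(j)}$ for a root $(i,\barj)$, and $u_{r+1}(i)=\overline{u_r(i)}$ for the root $(i,\bari)$. Throughout I work in the circular order $\prec:=\prec_{u_r(i)}$ anchored at the current entry, so that the two assertions to prove at step $r$ read: \emph{forward movement} $u_r(i)\prec u_{r+1}(i)$ (the transposition advances position $i$), and \emph{no overshoot} $u_{r+1}(i)\preceq C'(i)$ (the transposition does not pass the target in the sense of Remark~\ref{reason for mod remark}). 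Since no root of a later row $i'<i$ has either coordinate equal to $i$ or $\bari$ (one checks this directly against the shape of $\Gamma^{ki'}$, whose second coordinates all lie in $\{k+1,\ldots,n\}\cup\{\overline{i'},\overline n,\ldots,\overline{k+1},\overline{i'-1},\ldots,\bar1\}$), position $i$ is frozen after $T_i$, so $u_q(i)=C'(i)$. Thus once forward movement and no overshoot hold at every step, the full monotone chain $u_0(i)\prec u_1(i)\prec\cdots\prec u_q(i)=C'(i)$ follows.

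For the per-step analysis I would proceed by a case distinction on the stage of $(i,j_{r+1})$. A stage I root $(i,j)$ with $j>k$ falls under part (1) of Proposition~\ref{type B bruhat conditions}, whose edge condition forbids an intermediate index $k'$ with $u_r(i)\prec u_r(k')\prec u_r(j)$; the stage II root $(i,\bari)$ falls under part (3), where in type $B_n$ only case $3a$ survives, forcing $u_r(i)<u_r(\bari)$ and a sign flip that advances position $i$; the stage III and IV roots $(i,\barj)$ fall under part (2), and here I must use both subcases, in particular the type-$B_n$--specific case $2b$ (permitting $u_r(i)>u_r(\barj)$) which realizes a negative-to-positive change postponed from stage II. In each case the criterion yields forward movement directly, and its ``no intermediate index'' clause yields no overshoot: a value lying strictly between $u_r(i)$ and $u_{r+1}(i)$ in the circular order would either violate the edge condition or would have had to be routed through position $i$ at an earlier step, which the stage structure of $\Gamma^{ki}$ forbids.

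The exceptional step is isolated by the following observation. The only root whose edge condition carries no ``no intermediate index'' clause is the adjacent transposition $(i,i+1)$, since there is no index strictly between $i$ and $i+1$; consequently this is the unique root that is an edge of the quantum Bruhat graph irrespective of the circular positions of $u_r(i)$ and $u_r(i+1)$, hence the unique root able to pass the target. Within $\Gamma^{ki}$ the root $(i,i+1)$ occurs, as a stage I root, only when $i+1>k$, which together with $i\leq k$ forces $i=k$ and the root $(k,k+1)$. Because stage I roots come first in $\Gamma^{ki}$, any such target-passing edge can occur only as the initial step $r=0$ --- exactly the case singled out by the first branch of Algorithm~\ref{Mod-Greedy algorithm}, where blocked-off avoidance (Definition~\ref{block-off def}) compels its use. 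For every $r\geq1$ the surviving roots $(k,k+2),\ldots$ together with all stage II--IV roots are non-adjacent, so their edge conditions include the intermediate-index clause and no overshoot can occur, which gives the inequality for $r\geq1$ without exception.

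The main obstacle will be the bookkeeping of the sign changes together with the shift of the anchor of the circular order as position $i$ is updated: the criterion of Proposition~\ref{type B bruhat conditions} is phrased in the moving order $\prec_{u_r(i)}$, so I must check that advancing in each local order is consistent with the global progression toward $C'(i)$, and that no step silently wraps past $u_0(i)$. The type-$B_n$ features require the most care here --- namely the disappearance of case $3b$ (so a barred-to-unbarred flip cannot be performed by $(i,\bari)$ when $u_r(\bari)<u_r(i)$) and its replacement by the case $2b$ stage IV root $(i,\barj)$, which simultaneously alters position $j$. These are precisely the features that break the naive type $C_n$ argument, and reconciling them with the no-overshoot claim, while confirming that $(k,k+1)$ remains the sole target-passing root, is the delicate point of the proof.
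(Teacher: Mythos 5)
Your reduction of the lemma to a ``no overshoot'' claim, and your identification of $(k,k+1)$ as the unique root of $\Gamma^{ki}$ with an empty intermediate-index clause (hence necessarily the first root $j_1$, giving the $r=0$ exception), are consistent with the statement. But the central step --- that a root whose edge condition \emph{does} carry an intermediate-index clause cannot pass the target --- is where your argument has a genuine gap. The QBG criterion constrains the \emph{values sitting at positions} strictly between $i$ and $j_{r+1}$; it says nothing about the target value $C'(i)$ unless $C'(i)$ happens to occupy such a position. Since $C'(i)$ must be swapped into position $i$ by a later root of $T_i$, at time $r$ it typically sits at a position beyond $j_{r+1}$ or at a barred position, and then the edge $u_r \rightarrow u_{r+1}$ with $u_r(i)\prec C'(i)\prec u_{r+1}(i)$ is perfectly legal. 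Your fallback disjunct (``would have had to be routed through position $i$ at an earlier step'') does not repair this: the target needs to be routed \emph{into} position $i$ at a \emph{later} step, which the stage structure of $\Gamma^{ki}$ allows.

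The paper closes this gap with an argument you are missing: after an overshoot, the contradiction arises at a \emph{later} step, not at the overshooting step itself. For $i<k$, the value $C'(k)$ sits at position $k$, which is intermediate for every root of $\Gamma^{ki}$ and is never moved by $T_i$; the criterion at the overshooting step forces $u_{r+1}(i)\prec C'(k)\prec u_r(i)$, so the subsequent journey of position $i$ from $u_{r+1}(i)$ around to $C'(i)$ must at some step $r'$ jump over $C'(k)$, i.e.\ $u_{r'}(i)\prec C'(k)\prec u_{r'+1}(i)$, violating the criterion at that step. For $i=k$ the blocking value is the one at position $k+1$, which unlike $C'(k)$ can itself be moved (by $(k,k+1)$ and $(k,\overline{k+1})$), and this is exactly why the paper needs a separate two-case analysis there ($j_{r+1}\neq\overline{k+1}$ versus $j_{r+1}=\overline{k+1}$) --- another point your uniform ``non-adjacent roots carry a clause, hence no overshoot'' assertion glosses over. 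Without this blocking-value argument the lemma is not proved.
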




\begin{proof}
Suppose that there is some $0\leq r<q-1$ such that $u_r(i)\prec C'(i) \prec u_{r+1}(i)$.  We first show that this cannot happen during the application of transpositions in $T_i$ for $i<k$ and then we show that it can only happen for $j_1 = k+1$ in $T_k$.

Let $i<k$.  Then if we have $u_r(i)\prec C'(i) \prec u_{r+1}(i)$, the QBG criterion then gives that \[u_r(i)\prec C'(i) \prec u_{r+1}(i)\prec u_r(k) = C'(k)\prec u_r(i).\]  Therefore some transposition $(i,j_{r'})$, for $r+1 < r' \leq q$ will transpose over the value $C'(k)$, breaking the QBG criterion.

Now let $i=k$. Consider the case where $j_{r+1}\neq k+1$.  We will show that $u_r(i)\prec C'(i) \preceq u_{r+1}(i)$ cannot occur and the result follows. We break into two cases:  when $j_{r+1} \neq \overline{k+1}$ and when $j_{r+1} = \overline{k+1}$.

\underline{Case 1:} consider when $j_{r+1} \neq \overline{k+1}$.  Then by the QBG criterion, we have \[u_r(k)\prec C'(k)\prec u_{r+1}(k) \prec u_r(k+1)\prec u_r(k).\] However, the only way to transpose over the value in position $k+1$ is with the root $(k,\overline{k+1})$, after which there is no longer a root which can transpose over the root in position $j_r$.

\underline{Case 2:} consider when $j_{r+1} = \overline{k+1}$.  Then there is still the conflict of transposing over the value in position $j_r$.  Indeed, this could only be done with a Stage III transposition, of which $(k,\overline{k+1})$ is the last.

Therefore the only possible transposition where passing the target value $C'(i)$ can possibly occur is with the root $(k,k+1)$. 
\end{proof}

\vspace{12pt} The next lemma shows us that if we have two columns $CC'$ satisfying Condition 2, we know exactly where in the application of roots in $T$ the current word's value in position $i$ becomes greater than $C'(l)$ in the circle order starting at $C(i)$.

\begin{lemma}\label{Lenart reorder lemma} Let $CC'$ be a pair of columns such that $a:=C(i)\prec b:=C'(l)  \prec c:=C'(i)$ for some $1\leq i < l \leq k$. Then the reflection $(l,\overline{\imath})$ is in $T_l$. Furthermore, if $w$ is the word immediately prior to the application of $(l,\overline{i})$, then we have that $w(i)\prec C'(l)\prec C'(i)$ and $C'(l)\prec wt_{l,\overline{\imath}}(i)\prec C'(i)$.
\end{lemma}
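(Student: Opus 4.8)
The plan is to track the entry in position $i$ throughout the entire application of the folding sequence $T=T_kT_{k-1}\ldots T_1$, and to show that the unique moment at which this entry crosses the value $b=C'(l)$ in the circle order $\prec=\prec_a$ (with $a=C(i)$) is precisely the application of $(l,\overline{\imath})$ in $T_l$. First I would isolate which roots can touch position $i$. Inspecting the four stages of each factor $\Gamma^{kj}$, a root with first index $j$ moves position $i\le k$ only when $j=i$ (the roots collected in $T_i$), or when $j>i$ and the root is the stage~IV root $(j,\overline{\imath})$, which sends the entry in position $i$ to $\overline{w(j)}$ and the entry in position $j$ to $\overline{w(i)}$. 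In particular no factor $T_j$ with $j<i$ moves position $i$, so position $i$ is altered only while processing $T_k,\ldots,T_i$, in that order.

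Next I would establish global monotonicity: the entry in position $i$ is weakly increasing in $\prec$ as $T$ is applied, starting at $a=C(i)$ and ending at $c=C'(i)$. Inside $T_i$ this is Lemma~\ref{row monotonicity}; the only possible pass of the target there is via $(k,k+1)$, which cannot occur because $i<l\le k$ forces $i<k$, so the entry is in fact strictly increasing and stays $\preceq c$ throughout $T_i$. For each applied stage~IV root $(j,\overline{\imath})$ with $j>i$, I would read the weak increase $w(i)\preceq\overline{w(j)}$ directly off the edge criterion of Proposition~\ref{type B bruhat conditions}(2) (both the sign-preserving case 2a and the type $B_n$ sign-changing case 2b). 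Since $a\prec b\prec c$, a monotone path from $a$ to $c$ must cross the value $b$, and does so at a single root; this crossing root is what I must identify.

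The crux is to localize the crossing to $(l,\overline{\imath})\in T_l$, and I expect this to be the main obstacle. I would argue in three steps. First, the crossing cannot be internal to $T_i$: using the refined form of Condition~2 of Conditions~\ref{nec conditions for B columns} at the pair $(i,l)$, I would show that the entry in position $i$ already lies weakly beyond $b$ by the time $T_i$ begins, so the crossing is realized by one of the stage~IV roots $(j,\overline{\imath})$ with $j>i$. Second, among these the index is forced to be $j=l$: the post-crossing value is $\overline{w(j)}\in(b,c]$, and matching it against the target column $C'$, together with the monotonicity of position $l$ toward $C'(l)=b$ and the containment and parity clauses of the refined Condition~2 (namely $\{1,\dots,|b|\}\subseteq\{|C(t)|\}_{1\le t\le i}$, $|C(i)|\le b$, and the oddness of $p'$), forces the interaction to be exactly between positions $i$ and $l$. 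These same clauses verify the edge criterion of Proposition~\ref{type B bruhat conditions}(2) for $(l,\overline{\imath})$, so the root is genuinely available and selected, and they guarantee that applying it creates no blocked-off pair in the sense of Definition~\ref{block-off def}, keeping it consistent with $Path\_B$. Third, once $(l,\overline{\imath})$ has been applied the entry in position $i$ is $\succ b$, so by global monotonicity no later root---in $T_l$ after $(l,\overline{\imath})$, nor in $T_{l-1},\ldots,T_i$---can be the crossing. This pins the crossing uniquely to $(l,\overline{\imath})$, proving $(l,\overline{\imath})\in T_l$.

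Finally I would read off the stated inequalities. Let $w$ be the word immediately before $(l,\overline{\imath})$ is applied. Since the crossing has not yet occurred, global monotonicity gives $w(i)\prec b$, and since position $i$ never passes $c$ we also have $w(i)\prec c$, so $w(i)\prec b\prec c$. Applying the root sends position $i$ to $wt_{l,\overline{\imath}}(i)=\overline{w(l)}$, which is the crossing value, whence $b\prec\overline{w(l)}$; the right strict inequality $\overline{w(l)}\prec c$ is inherited from global monotonicity together with the fact that the final entry $c=C'(i)$ is attained only inside $T_i$, which is processed strictly after $T_l$. This yields $C'(l)\prec wt_{l,\overline{\imath}}(i)\prec C'(i)$, completing the argument.
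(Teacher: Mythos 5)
Your high-level plan (track the entry in position $i$, locate the step where it crosses $b=C'(l)$, and pin that crossing to $(l,\overline{\imath})$) matches the spirit of the paper's proof, but two of your key steps are not available to you, and one of them makes the argument circular. The circularity: at both crucial points --- ruling out a crossing inside $T_i$, and forcing the crossing index to be $j=l$ --- you invoke the refined clauses of Condition~2 of Conditions~\ref{nec conditions for B columns} (the containment $\{1,\dots,|b|\}\subseteq\{|C(t)|\}_{1\le t\le i}$, the bound $|C(i)|\le b$, and the oddness of $p'$). But in this section those clauses are the \emph{conclusion}, not a hypothesis: Lemma~\ref{Lenart reorder lemma} is proved first precisely so that Proposition~\ref{Reorder Nec} can derive Conditions 2a, 2b, 2c from it. In proving this lemma you may use only the assumed existence of a path with labels in $\Gamma^l(k)$, the edge criterion of Proposition~\ref{type B bruhat conditions}, Lemma~\ref{row monotonicity}, and the pattern $a\prec b\prec c$. (Relatedly, there is nothing to check about availability of roots or consistency with $Path\_B$: in this direction the path is given, not constructed.)

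The second gap is your ``global monotonicity'' of position $i$ across all of $T_k,\dots,T_i$, which does not read off the edge criterion as claimed. A case (2b) edge of Proposition~\ref{type B bruhat conditions} replaces a barred entry in position $i$ by an unbarred one; on the circle such a step can wrap past the reference point $a$, so it need not be an increase for $\prec_a$. Controlling when these sign-changing steps can occur is exactly the delicate content of this section and cannot be assumed. The paper avoids both problems with a ``frozen value'' argument that your proposal is missing: once $T_l$ has been applied, the value $b=C'(l)$ sits permanently in position $l$, and position $l$ lies strictly between the two positions exchanged by any later reflection moving position $i$; hence by the quantum Bruhat criterion no reflection in $T_{l-1},\dots,T_i$ can carry position $i$ across $b$, so $b\prec C_l(i)\preceq c$ and the crossing is some stage IV root $(m,\overline{\imath})$ with $m\ge l$. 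The case $m>l$ is then eliminated by analyzing the remaining stage IV roots $(i_0,\overline{m}),\dots,(i_p,\overline{m})$ of $T_m$: after they are applied, $C'(m)=\overline{a_p}$ is frozen in position $m$, and position $l$ (whose target is $b$) would subsequently have to cross $a_p$ sitting in position $\overline{m}$, again violating the criterion. Replacing your appeals to Condition~2 and to global monotonicity by these two frozen-value arguments is what the paper's proof does; as written, your proposal does not stand on its own.
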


\begin{proof}

 Let us assume that we have $a:=C(i)\prec b:=C'(l)  \prec c:=C'(i)$ for some $1\leq i < l \leq k$.  We first show that there is a root with the said properties and then show that this root is exactly the root $(l,\overline{\imath})$.

  First, note that we cannot have $C_l(i)\preceq b\prec c$, otherwise the entry in position $i$ of the signed permutation would then change from $C_l(i)$ to $C'(i)$ via reflections in $T_{l-1}T_{l-2}\hdots T_i$. One of these reflections would then transpose entries across $b$, violating the quantum Bruhat graph criterion. It must then be that $b \prec C_l(i)\preceq c$.  This means that at some point in the process of applying to $u$ the reflections in $T_k,T_{k-1},\hdots , T_l$, we apply to the current signed permutation $w$ a reflection $(i,\overline{m})$ such that $a\prec a':= w(i)\prec b\prec c':= w(\overline{m})\prec c$.  Let $(i_0,\overline{m}),\hdots , (i_p,\overline{m})$ be the segment of $T_m$ starting with $(i,\overline{m})$ and consisting of roots $(- , \overline{m})$, where $i = i_0 > i_1 > \hdots >i_p\geq 1$.  Let $a_r:=w(i_r)$ for $0\leq r\leq p$.

We now show that $(i,\overline{m})$ is our desired root, or equivalently, that $m=l$. Since  $wt_{i,\overline{m}}t_{i_1,\overline{m}}\hdots t_{i_j,\overline{m}}=\pi_m$, the quantum Bruhat criterion gives that $\pi_m[i_p,\overline{m}]$ cannot have any entries between $a_p$ and $c'$ except maybe $\overline{a_p}$ in the $m^{th}$ position of $C_m$. It now follows that since $i_p\leq i<l\leq m <\overline{m}$, we cannot have $a_p\prec C_m(l)\prec c'$ unless $m = l$ where $C_m(m) = \overline{a_p}$. Suppose that $m\neq l$. Then $C_m(l)\prec a_p\prec c'$. Note that by Lemma~\ref{row monotonicity}, $\overline{c'}\prec \overline{a'}=\overline{a_0}\prec \overline{a_1}\prec\hdots\prec \overline{a_p}$. This gives us that $a_p \prec a' \prec c'$ and so $C_m(l)\prec a_p \prec a'\prec b \prec c'$. Since $C'(l) = b$, there would then have to be some transposition in $T_{m-1},\hdots, T_1$ which would transpose two values over $a_p$ in position $\overline{m}$. This contradicts the quantum Bruhat criterion, and so it must have been that $\overline{a_p} = b$ and $m=l$.
\end{proof}

\begin{example}\label{Lenart reorder lemma example}
{\rm Consider the following quantum Bruhat graph path in $B_8$. Here we have $u[1,6]$ and $v[1,6]$ follow the proposed reorder criterion. Note that $u[4]\prec v[6]\prec v[4]$ and that $u[1,6]$, $vt_{4,6}[1,6]$ is blocked off at $4$ by $3$. We see not only that $t_{6,\overline{4}}$ is indeed in the path, but also that while $w[4]\prec v[6]\prec v[4]$ the next transposition gives $wt_{6,\overline{4}}[4]\prec v[4]\prec v[6]$ .

$$\begin{array}{l}{u} \\ \tableau{{1}\\{4}\\{ \overline{2}}\\{\overline{3}}\\{8}\\{7}}
 \\ \\ \tableau{{5}\\{6}} \end{array} 
 \!\begin{array}{c} \\ \xrightarrow{(6,\overline{6})} \end{array}\!\begin{array}{l} \\ \tableau{{1}\\{4}\\{ \overline{2}}\\{\overline{3}}\\{8}\\{\overline{7}}}
 \\ \\ \tableau{{5}\\{6}} \end{array} 
 \!\begin{array}{c} \\ \xrightarrow{(6,\overline{8})} \end{array}\! \begin{array}{l} \\ \tableau{{1}\\{4}\\{ \overline{2}}\\{\overline{3}}\\{8}\\{\overline{6}}}
 \\ \\ \tableau{{5}\\{7}} \end{array} 
 \!\begin{array}{c} \\ \xrightarrow{(6,\overline{7})} \end{array}\! \begin{array}{l} {w} \\ \tableau{{1}\\{4}\\{ \overline{2}}\\{\overline{3}}\\{8}\\{\overline{5}}}
 \\ \\ \tableau{{6}\\{7}} \end{array} 
 \!\begin{array}{c} \\ \xrightarrow{(6,\overline{4})} \end{array}\! \begin{array}{l} {wt_{6\overline{4}}} \\ \tableau{{1}\\{4}\\{ \overline{2}}\\{5}\\{8}\\{3}}
 \\ \\ \tableau{{6}\\{7}} \end{array} 
 \!\begin{array}{c} \\ \xrightarrow{(4,7)} \end{array}\!  \begin{array}{l} \\ \tableau{{1}\\{4}\\{ \overline{2}}\\{6}\\{8}\\{3}}
 \\ \\ \tableau{{5}\\{7}} \end{array} 
 \!\begin{array}{c} \\ \xrightarrow{(2,7)} \end{array}\!  \begin{array}{l} {v}\\ \tableau{{1}\\{5}\\{ \overline{2}}\\{6}\\{8}\\{3}}
 \\ \\ \tableau{{4}\\{7}} \end{array} 
 \!
  \,$$
}
\end{example}

\begin{remark}\label{C_m contains 1,...,b}
{\rm Let $CC'$ be a pair of columns such that $a:=C(i)\prec b:=C'(l)  \prec c:=C'(i)$ for some $1\leq i < l \leq k$. We claim that $\{1,2,\hdots,|b|-1\}\subseteq\{|C_m[j]|\}_{1\leq j\leq i}$ and $\{1,2,\hdots,|b|-1\}\subseteq\{|C_{m+1}[j]|\}_{1\leq j\leq i}$.  For the first, recall from the proof of Lemma~\ref{Lenart reorder lemma} that there cannot be any values between $\overline{b}$ and $c'$ in $\pi_m[i,\overline{m}]$.  The same proof gave way to inequalities which tell us that $|b|<|c'|$.  This means that there cannot be any values between $\overline{b}$ and $b$ in $\pi_m[i,\overline{m}]$.  Since $i<m$, we have that there cannot be any values between $\overline{b}$ and $b$ in $\pi_m[i,\overline{i}]$. The first claim then follows.  For the second, notice that the only difference between the content of $C_{m+1}[1,i]$ and $C_m[i,i-1]$ is the loss of $a_p=\overline{b}$.}
\end{remark}

\vspace{12pt}
The following Lemma will be used for showing the necesity of Condition 2c.

\begin{lemma}\label{pos order} Suppose that there is $1\leq i < l \leq k$ such that $C(i)\prec C'(l)\prec C'(i)$, and Conditions 2a,b are met. Then for all $1\leq j\leq i-1$, if $C(j),C'(j)>0$, then $C(j) < C'(j)$.
\end{lemma}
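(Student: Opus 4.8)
The plan is to prove Lemma~\ref{pos order} by analyzing what the hypotheses force about the structure of the columns, then arguing by contradiction. Suppose for some $1\leq j\leq i-1$ we have $C(j),C'(j)>0$ but $C(j)\geq C'(j)$ (equality is excluded by Condition 1, so really $C(j)>C'(j)$). First I would exploit Condition 2a, which by hypothesis holds: it tells us that $\{1,2,\ldots,|C'(l)|-1\}\subseteq\{|C'(s)|\}_{1\leq s\leq i-1}$ and that $\{1,2,\ldots,|C'(l)|\}\subseteq\{|C(s)|\}_{1\leq s\leq i}$. Combined with Condition 2b, namely $C'(l)>0$ and $|C(i)|\leq C'(l)$, these set-containment conditions control precisely which small absolute values appear in the top $i$ entries of each column, and this is the structural leverage I expect to need.

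The key step is to translate the assumed violation $C(j)>C'(j)$ into a statement about where the relevant small values sit. Since $C$ and $C'$ are reordered columns satisfying Conditions~\ref{nec conditions for B columns}, and since the reorder procedure (Algorithm~\ref{Mod-Reorder algorithm}) pairs entries of $C'$ to entries of $C$ using the circular order $\prec_{C(j)}$, I would examine what the circular comparison $C(j)\prec C'(l)\prec C'(i)$ (the global hypothesis at index $i$) forces together with the local data at index $j$. The natural route is to observe that if $C(j),C'(j)>0$ with $C(j)>C'(j)$, then in the circular order $\prec_{C(j)}$ the value $C'(j)$ would have to wrap around past $n$, and I would show this conflicts with the containment $\{1,\ldots,|C'(l)|-1\}\subseteq\{|C'(s)|\}_{1\leq s\leq i-1}$ by exhibiting a small positive value that is then forced into the wrong position or is missing. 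Concretely, I would track the positive entries among $C'(1),\ldots,C'(i-1)$ and use 2a to conclude they must form (a superset of) an increasing initial run $1,2,\ldots$, which an inversion $C(j)>C'(j)$ in positive territory would disrupt.

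The main obstacle I anticipate is the interaction between the circular order and genuine (non-circular) comparisons. The hypotheses are stated using $\prec$ (circular, based at $C(i)$ or $C(j)$), but the conclusion $C(j)<C'(j)$ is an ordinary linear comparison of positive integers. The delicate part is to show that under the containment conditions 2a the circular order and the linear order on the positive entries actually agree in the relevant range, so that a circular-order fact can be upgraded to the desired linear inequality. I would handle this by arguing that because $\{1,\ldots,|C'(l)|\}$ all appear in $C(1),\ldots,C(i)$ with $|C(i)|\leq C'(l)$, the base point $C(j)$ and all the positive values under consideration lie in a single "unwrapped" arc of the circle where $\prec$ restricts to the usual order; once that is established, the inversion is immediately ruled out. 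I expect the bookkeeping of exactly which indices carry the small values (and the parity role that $p$ plays in 2c, though 2c itself is not assumed here) to be the most error-prone part, but it should be a finite case check rather than a deep argument.
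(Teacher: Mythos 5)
Your proposal has a fundamental gap: it never uses the standing hypothesis of Section~\ref{R2nextL}, namely that $C$ and $C'$ are joined by a path in the quantum Bruhat graph whose edge labels form a subsequence of $\Gamma^l(k)$ (so that $C' = C\,T_k T_{k-1}\cdots T_1$). Without that hypothesis the statement you are trying to prove is simply false, so no amount of circular-order bookkeeping from Conditions 2a,b can close the argument. Concretely, take $n\geq 10$, $k=4$, $C=(5,1,\overline{2},7)$, $C'=(3,1,4,2)$, and $(i,l)=(3,4)$, so $b=C'(l)=2$. Then $C(3)=\overline{2}\prec 2 \prec 4=C'(3)$; Condition 2a holds, since $\{1,2\}\subseteq\{|C(1)|,|C(2)|,|C(3)|\}=\{5,1,2\}$ and $\{1\}\subseteq\{|C'(1)|,|C'(2)|\}=\{3,1\}$; Condition 2b holds, since $C'(4)=2>0$ and $C(3)=\overline{2}$; yet $C(1)=5>3=C'(1)$ with both entries positive, so the conclusion fails at $j=1$. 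The reason your strategy cannot work is visible here: 2a,b only constrain entries of absolute value at most $b=C'(l)$, and say nothing about entries larger than $b$ sitting in rows above $i$ --- which is exactly where an inversion between two positive entries can live. (Of course this pair of columns cannot be joined by a path with labels in $\Gamma^l(4)$; that impossibility is precisely what the lemma encodes, and it is invisible to a purely static argument.) A further small error: Condition 1 only asserts $C(i)\neq C'(l)$ for $i<l$, i.e., for distinct rows, so it does not rule out $C(j)=C'(j)$ as you claim at the outset.

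The paper's proof is correspondingly dynamic rather than static. It supposes the entry in position $j$ becomes negative at some intermediate word of the path and derives a contradiction from the type $B_n$ quantum Bruhat graph criterion (Proposition~\ref{type B bruhat conditions}) together with the stage structure of $\Gamma^{ki}$: a positive-to-negative change in position $j$ can only come from a stage I or II reflection in $T_j$; Condition 2a then forces the resulting entry into $\{\overline{n},\ldots,\overline{b+1}\}$; and no later stage II, III, or IV reflection can restore a positive value in position $j$ without transposing over $b$ in position $l$ (or over $\overline{b}$ in position $\overline{l}$), violating the criterion. Once position $j$ is known to stay positive along the whole path, Lemma~\ref{row monotonicity} (monotonicity of the entry in position $j$ with respect to $\prec_{C(j)}$) upgrades the circular comparison to the ordinary linear inequality $C(j)<C'(j)$. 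In other words, the ``unwrapping'' step you flagged as the delicate point is not a technicality to be settled by case analysis of the containments in 2a: it is the entire content of the lemma, and it is obtained only from the existence of the path and the sign restrictions of the quantum Bruhat graph criterion, neither of which your proposal invokes.
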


\begin{proof}
 Suppose that for such a $j$, there is an $r\in\{1,\hdots,q\}$ such that $u_r(j)<0$. Recall that by the quantum Bruhat criterion, all upsteps in stages III and IV must maintain the same sign.  This means that the transposition acting on the current word $w$ changing position $j$ from positive to negative must come from a stage I or II reflection which can only happen in $T_j$ at $(j,s_0)$ for some $s_0\in\{k+1,\hdots,\overline{k+1}\}\cup\{\overline{j}\}$. Since Condition 2a holds, $wt_{j,s_0}(j)\notin [\overline{b}]$. Let the remaining stage I reflections be $(j,s_1),\hdots,(j,s_p)$ where $k+1\leq s_0 < \hdots <s_p\leq n$. Notice that $w[s_i]\notin [\overline{b}]$ for any $0\leq i\leq p$ by Condition 2a. This means that $b\prec w'(j)=wt_{j,s_0}\hdots t_{j,s_p}[j]\prec \overline{b}$. However, if $w'[j]\in \{b,\hdots,n\}$, then one of the reflections $(j,s_i)$ would have to transpose two values over $b$ in position $l$, contradicting the quantum Bruhat criterion.  Thus $w'(j)\in\{\overline{n},\hdots,\overline{b+1}\}$. Note that we cannot apply the $(j,\overline{j})$ reflection at this time by quantum bruhat criterion, so we move on to stage III reflections. Again by Condition 2b, each stage III reflection can only change position $j$ to a value in $\{b+1,\hdots,\overline{b+1}\}$. However, no stage III reflection can take position $j$ to anything in $\{b+1,\hdots,w'[j]\}$ without contradicting either the quantum Bruhat criterion or Lemma~\ref{row monotonicity}.  Thus if $w''$ is the word at the end of applying stage III transpositions, we have that $w''(j)\in\{\overline{n},
\hdots,\overline{b+1}\}$. This means that there must be a stage IV reflection taking $w''(j)$ to a positive value.  This cannot happen, as it would require a reflection to transpose over $\overline{b}$ in position $\overline{l}$, contradicting the quantum Bruhat graph criterion.  Thus there is no such $u_r(j)<0$ and the lemma holds.

\end{proof}  

\begin{remark}\label{One Downstep per row}
{\rm By Lemma~\ref{row monotonicity}, it is clear that for each $1\leq j < k$, there may only be at most one downstep during $T_j$. Otherwise we would inevitably pass over the target.
}
\end{remark}

\vspace{12pt} The following Lemma was given by Lenart in~\cite{Lenart 2012} and is not type dependant so we will use it freely.

\begin{lemma}\label{Lenart 7.1}
Fix $i$ and $l$ with $1\le i < l \leq k$. Let $a$ be the entry in position $i$ of the signed permutation obtained at some point in the process of applying to $u$ the reflections in $T_k,T_{k-1},\hdots, T_{l+1}$. Then either $a$ appears in $C_{l+1}[1,i]$ or $\overline{a}$ appears in $C_{l+1}[l+1,k]$.

\end{lemma}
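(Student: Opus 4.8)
The plan is to track the orbit of the value $a$ as the remaining reflections of $T_k,T_{k-1},\ldots,T_{l+1}$ are applied, starting from the signed permutation realized at the given intermediate point, and to confine $a$ to a restricted set of positions by an invariant. Concretely, I would prove by induction on the reflections applied the following claim: after each reflection, the value $a$ occupies either a positive position in $\{1,\ldots,i\}$ or a barred position in $\{\overline{l+1},\ldots,\overline{k}\}$. Granting this invariant, the lemma follows by reading off $\pi_{l+1}$: if $a$ sits at a positive position $p\le i$ then $a=C_{l+1}(p)\in C_{l+1}[1,i]$, whereas if $a$ sits at $\overline{m}$ with $l+1\le m\le k$ then, since $w(\overline{m})=\overline{w(m)}$ forces $C_{l+1}(m)=\overline{a}$, we get $\overline{a}\in C_{l+1}[l+1,k]$. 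The base case is trivial, as $a$ starts at position $i$.

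For the inductive step the key point is that every reflection in $T_m$ has first coordinate $m\ge l+1>i$, so I can enumerate exactly which reflections displace $a$, using that $\Gamma^{km}$ is read in the order of stages I, II, III, IV, that a reflection $(m,\overline{s})$ swaps the contents of positions $m\leftrightarrow\overline{s}$ and $s\leftrightarrow\overline{m}$, and that a stage-I reflection $(m,s)$ swaps $m\leftrightarrow s$ and $\overline{m}\leftrightarrow\overline{s}$. When $a$ lies at a positive position $p\le i$, the only reflection touching position $p$ is the stage-IV reflection $(m,\overline{p})$ (stages I--III involve only positions $m$, positions $s>k$, and their bars, all outside $\{1,\ldots,i\}$); this sends $a$ to $\overline{m}$, and $m\le k$ gives $\overline{m}\in\{\overline{l+1},\ldots,\overline{k}\}$. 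When $a$ lies at a barred position $\overline{m'}$, it must have arrived there via a stage-IV reflection of $T_{m'}$ (the only mechanism, by the previous case), so the processing of $T_{m'}$ is already in stage IV; then either a later stage-IV reflection $(m',\overline{s})$ of $T_{m'}$ returns $a$ to the positive position $s<p\le i$, or, once $T_{m'}$ is exhausted, no reflection of a subsequent factor $T_{m''}$ with $m''<m'$ touches $\overline{m'}$ at all, so $a$ remains at $\overline{m'}$.

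The main obstacle is precisely this case analysis, where I must exclude the two ``bad'' destinations for $a$: the gap positions $\{i+1,\ldots,l\}$ and the far barred positions $\{\overline{s}:s>k\}$. Excluding the gap positions rests on the fact that stage-IV reflections move $a$ only between a positive position and its mirror barred position, never between two distinct positive column positions, so $a$ can leave $\{1,\ldots,i\}$ only by passing to a barred position. Excluding the far barred positions rests on the ordering of the stages: once $a$ reaches $\overline{m'}$ through a stage-IV reflection, all stage-I reflections of $T_{m'}$ (the only ones that could carry $a$ to some $\overline{s}$ with $s>k$) have already been applied, and the decreasing order $k,k-1,\ldots,l+1$ of the factors guarantees that no later factor touches $\overline{m'}$. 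I would present these two points carefully, since this is where the geometry of the $\omega_k$-chain enters; the remaining bookkeeping is routine, and notably the argument is purely orbit-theoretic and does not even require the monotonicity of Lemma~\ref{row monotonicity}.
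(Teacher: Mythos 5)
Your proof is correct, and in fact it supplies something the paper does not: the paper states Lemma~\ref{Lenart 7.1} without proof, importing it from \cite{Lenart 2012} with only the remark that it ``is not type dependent,'' so there is no internal argument to compare yours against. Your confinement invariant is the right mechanism, and the two pivotal checks are done correctly. First, every reflection available in $T_m$ (for $m\geq l+1>i$) permutes only the positions $m,\overline{m}$ together with $s,\overline{s}$, where $s>k$ in stages I--III and $s<m$ in stage IV; hence a value occupying a position $p\leq i$ can be displaced only by the stage-IV reflection $(m,\overline{p})=t_{m\overline{p}}t_{p\overline{m}}$, which parks it at $\overline{m}$ with $l+1\leq m\leq k$ --- this is what rules out the gap positions $i+1,\ldots,l$. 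Second, because stage IV comes last within $\Gamma^{km}$ and the factors are processed in the decreasing order $T_k,T_{k-1},\ldots,T_{l+1}$, once the value sits at $\overline{m}$ the only remaining reflections that involve that position are the later stage-IV reflections $(m,\overline{s})$ with $s<p$, which return the value to a position $\leq i$; no subsequent factor $T_{m''}$ with $m''<m$ touches $\overline{m}$ at all, since its stage I--III roots involve only $\overline{s}$ with $s>k$ or $s=m''$, and its stage IV roots involve only $\overline{s}$ with $s<m''<m$ --- this is what rules out the far barred positions. The read-off at $\pi_{l+1}$ via $w(\overline{m})=\overline{w(m)}$ is also correct, and it does not matter whether the reflections that could move $a$ actually belong to the folding subset $T$, since their absence only keeps $a$ in place. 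Your closing observation that the argument is purely positional --- it never invokes the quantum Bruhat graph criterion, admissibility, or Lemma~\ref{row monotonicity} --- is precisely the reason the lemma holds uniformly across the classical types, which is what the paper is implicitly relying on when it uses the lemma ``freely.''
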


\begin{proposition}\label{Reorder Nec}The pair of columns $CC'$ satisfy Conditions $1$ and $2$.\end{proposition}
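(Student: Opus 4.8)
The plan is to establish Conditions 1 and 2 as necessary consequences of the existence of a quantum Bruhat graph path whose edge labels form a subsequence of $\Gamma_l(k)$ (respectively $\Gamma_r(k)$), connecting the column $C$ to the column $C'$. The lemmas proved so far do most of the heavy lifting, so the proposition should assemble them. First I would fix indices $1\leq i < l \leq k$ and set up the path $u = u_0, u_1, \ldots, u_p = v$ with $u[1,k] = C$ and $v[1,k] = C'$, recalling the row monotonicity from Lemma~\ref{row monotonicity}, namely that for each fixed row the entries move weakly in the circle order $\prec_{C(i)}$ toward their target $C'(i)$, with the single exception controlled by the root $(k,k+1)$.

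For Condition 1 (that $C(i)\neq C'(l)$ whenever $i<l$), I would argue by contradiction: suppose $C(i) = C'(l)$. Because the entries in rows $i$ and $l$ are distinct throughout the path (the signed permutations are bijections), the value $C'(l)$ occupies position $l$ of $v$ but equals the starting value in position $i$. Tracking this value through the path and invoking the monotonicity of Lemma~\ref{row monotonicity} together with the quantum Bruhat criterion of Proposition~\ref{type B bruhat conditions}, one forces a transposition that moves an entry past the target in some row, contradicting Lemma~\ref{row monotonicity}. The cleanest route is to note that a repeated value in positions $i<l$ of the final column is incompatible with the weakly increasing sequences $C(i) = u_0(i)\preceq\cdots\preceq u_p(i) = C'(i)$ in each row, since reaching equality would require a forbidden crossing.

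For Condition 2, the substance is essentially encapsulated in Lemma~\ref{Lenart reorder lemma} and its companions. If $C(i)\prec C'(l)\prec C'(i)$ holds, then Lemma~\ref{Lenart reorder lemma} guarantees that the reflection $(l,\overline{\imath})$ appears in $T_l$ and identifies the word $w$ immediately prior to its application, giving $w(i)\prec C'(l)\prec C'(i)$ and $C'(l)\prec w t_{l,\overline{\imath}}(i)\prec C'(i)$. I would then convert this into the blocked-off conclusion: Remark~\ref{C_m contains 1,...,b} supplies the containment statements $\{1,\ldots,|C'(l)|-1\}\subseteq\{|C_m(j)|\}_{1\leq j\leq i}$ needed for part 2a, the inequalities extracted in the proof of Lemma~\ref{Lenart reorder lemma} (namely $|b|<|c'|$ and the sign restriction forced by $(l,\overline{\imath})$ being a stage~II-type reflection) yield part 2b, and Lemma~\ref{pos order} delivers the parity statement of part 2c by controlling which rows $j<i$ can undergo a sign change. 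Assembling these three pieces shows precisely that $C$ and $C' t_{il}$ are blocked off at $i$ by $C'(l)$, which is the equivalent reformulation of Condition 2.

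The main obstacle I anticipate is the parity count in Condition 2c: verifying that $p'$ is odd requires carefully tracking, across the entire path, exactly which rows $j\leq i$ change sign, and Lemma~\ref{pos order} only rules out spurious sign changes for rows with $C(j),C'(j)>0$. The delicate point is to show that the sole net sign change among rows $1,\ldots,i$ relevant to the block-off count is the one forced by the $(l,\overline{\imath})$ reflection acting in row $i$, so that the parity is pinned down. I would handle this by combining Lemma~\ref{pos order} with Remark~\ref{One Downstep per row} (at most one downstep per row) and Lemma~\ref{Lenart 7.1} to account for every entry whose sign could flip, thereby fixing the parity. Everything else reduces to bookkeeping with the circle order and the quantum Bruhat criterion.
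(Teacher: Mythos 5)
Your toolkit is the paper's toolkit---Lemma~\ref{Lenart reorder lemma}, Remark~\ref{C_m contains 1,...,b}, Lemma~\ref{pos order}, Remark~\ref{One Downstep per row}, Lemma~\ref{Lenart 7.1}---and your treatment of Condition 1 (which the paper simply imports from type $C_n$) is fine. But there are two genuine gaps in how you assemble the pieces for Condition 2.

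First, for Condition 2a you assert that Remark~\ref{C_m contains 1,...,b} ``supplies the containment statements needed,'' and indeed you write the containment in terms of $C_m$. That is not what Condition 2a asks for: the remark only gives $\{1,\ldots,|b|-1\}\subseteq\{|C_m(j)|\}_{1\leq j\leq i}$ and the same for $C_{m+1}$, i.e., statements about the \emph{intermediate} columns at the moment the critical reflection is applied, whereas Condition 2a is about the \emph{endpoint} columns $C$ and $C'$. The bulk of the paper's proof is precisely this transfer: one must show that no reflection in $T_{m-1},\ldots,T_1$ can move a value of $[\overline{b-1}]$ out of positions $1,\ldots,i$ without transposing over $b$ in position $m$ (contradicting the QBG criterion), and---harder still---that $\{1,\ldots,b\}\subseteq\{|C(j)|\}_{1\leq j\leq i}$ holds for the \emph{starting} column $C$, which in the paper requires the involved argument tracking a tail $(i_1',\overline{t}),\ldots,(i_r',\overline{t})$ of some $T_t$ with $t\in\{m+1,\ldots,k\}$ and the claim that $\overline{\mu}=C'(t)\in\{c',\ldots,\overline{c'}\}$. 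Your proposal skips this transfer entirely, and it is the heart of the proof.

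Second, for Condition 2c your stated intermediate goal---that ``the sole net sign change among rows $1,\ldots,i$ \ldots is the one forced by the $(l,\overline{\imath})$ reflection''---is false as a statement to prove, so the plan built on it would fail. Stage IV downsteps occurring in $T_{i-1},\ldots,T_1$ genuinely do flip signs of rows $\leq i$; the saving feature is that each such downstep flips \emph{two} of them, so these contribute an even count. The paper's argument is a segment-by-segment parity count: no relevant downsteps in $T_k,\ldots,T_{l+1}$, exactly one in $T_l$ (forced by Lemma~\ref{Lenart reorder lemma}, Remark~\ref{C_m contains 1,...,b}, and Remark~\ref{One Downstep per row}), none in $T_{l-1},\ldots,T_i$, an even number of affected positions from $T_{i-1},\ldots,T_1$, together with the observation that no row ever flips back from positive to negative; oddness of $p'$ then follows via Lemma~\ref{pos order}. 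Uniqueness of the sign change is neither true nor needed. (A small further slip: $(l,\overline{\imath})$ lies in stage IV of $T_l$, not ``stage II-type''; stage II would be $(l,\overline{l})$.)
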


\begin{proof} The proof of condition 1 is the same as in type $C_n$.  Let us now assume that we have $a:=C(i)\prec b:=C'(l)  \prec c:=C'(i)$ for some $1\leq i < l \leq k$.  Let $a',c',m,$ and $w$ be as in Lemma~\ref{Lenart reorder lemma}.

\vspace{12pt} We first show that $\{1,\hdots,b-1\}\subseteq\{|C'[j]|\}_{1\leq j\leq i}$. It is equivalent to show that values in $[\overline{b-1}]$ are not in $\pi_1[i+1,n]$.  This is true for positions $m$ though $k$, as $\pi_1[m,k] = \pi_m[m,k]$ which was shown in Remark~\ref{C_m contains 1,...,b} to not contain elements of $[\overline{b-1}]$.  Suppose that $|\pi_1(r)|\in\{1,\hdots,b-1\} $ for some $r\in\{i,\hdots,m-1\}$.  Then (from the proof of Lemma~\ref{Lenart reorder lemma}) $\pi_m(r)$ is between $c'$ and $\overline{b}$ and  $C'(r)=\pi_1(r)$ is in either $\pi_m[1,i-1]$ or $\pi_m[\overline{i-1},\overline{1}]$ and by Lemma~\ref{Lenart 7.1}, necessarily the latter.  But during the reflections in $T_r,\hdots,T_m$ there would have to be a reflection that transposes over $\overline{b}$ in position $\overline{m}$, contradicting the quantum Bruhat criterion.  Now suppose that $|\pi_1(r)|\in[\overline{b-1}]$ for some $r\in\{k,\hdots,n,\overline{n},\hdots,\overline{k+1}\}$. But this would mean that some reflection in $T_{m-1},\hdots,T_1$ would have to transpose an entry over $b$ in position $m$, contradicting the the quantum Bruhat criterion.  Thus there are no elements of $[\overline{b-1}]$ in $\pi_1[i+1,\overline{i+1}]$ and so $\{1,\hdots,b-1\}\subseteq\{|C'[j]|\}_{1\leq j\leq i}$ as desired.

\vspace{12pt}
 We now show that $\{1,\hdots,b\}\subseteq\{|C[j]|\}_{1\leq j\leq i}$. If not, then, given what we know about the structure of $\pi_{m+1}$ from Remark~\ref{C_m contains 1,...,b}, there must be a $t\in\{m+1,\hdots,k\}$ and $i_1'\in\{1,\hdots,i\}$ where the application of the reflection $(i_1',\overline{t})$ to the current word $w'$ transposes value $\overline{\gamma}=w'[\overline{t}]$ and $w'[i_1']$ where $\overline{b}\preceq\gamma\preceq b$.
     Let $(i_1',\overline{t}),\hdots , (i_r',\overline{t})$ be the remainder of $T_t$ starting with $(i_1',\overline{t})$ and consisting of roots $(\cdot , \overline{t})$, where $ i_1' > i_2' > \hdots >i_r'\geq 1$.

     We claim that  $\overline{\mu}:=w't_{i_1',\overline{t}}\hdots t_{i_{r-1}',\overline{t}}[\overline{i_r'}]=w't_{i_1',\overline{t}}\hdots t_{i_r',\overline{t}}[t]=C_t[t]=C'[t]\in\{c',\hdots,\overline{c'}\}$, $c'>0$, and further that no values in $w'[i,\overline{t}]$ can lie between $\mu$ and $\overline{\gamma}$. This means that $w'[m]$ lies between $\overline{\gamma}$ and $\mu$.  But, from the construction in the proof in Lemma~\ref{Lenart reorder lemma}, we had that $C_m[m] = \overline{c'}$, which would be impossible, as we would then have to traverse over $\overline{\mu}$ in position $t$ during one of the reflections in $T_{t-1},\hdots,T_{m+1}$, breaking the quantum Bruhat criterion.  Therefore there was no such $\gamma$ and it must be that $\{1,\hdots,b\}\subseteq\{|C[j]|\}_{1\leq j\leq i}$.
     
     We finish this part by proving the two claims.  First, that $\overline{\mu}:=w't_{i_1',\overline{t}}\hdots t_{i_{r-1}',\overline{t}}[\overline{i_r'}]=w't_{i_1',\overline{t}}\hdots t_{i_r',\overline{t}}[t]=C_t[t]=C'[t]\in\{c',\hdots,\overline{c'}\}$. Recall from construction in the proof of Lemma~\ref{Lenart reorder lemma} that the reflection $(m,\overline{i})$ will be used to transpose $a'$ and $c'$ in $w$ during the application of reflections in $T_m$ and so there are no values between $a'$ and $c'$ in $w[i,\overline{m}]$ and nothing betwen $\overline{c'}$ and $\overline{a'}$ in $w[m,\overline{i}]$. This means that all values in $w[m,\overline{m}]$ lie in $\{\overline{a'},\hdots,a'\}\cup\{c',\hdots,\overline{c'}\}$.  Notice that $w[m+1,\overline{m+1}]=\pi_m[m+1,\overline{m+1}]=\pi_1[m+1,\overline{m+1}]$ and since $\overline{b}\leq a'<b$, we have that $a'\in [\overline{b}]$, so by the condition set on the position of the values on $\pi_m$ in Remark~\ref{C_m contains 1,...,b}, we get that values in $w[m+1,\overline{m+1}]$ must come from $\{c',\hdots,\overline{c'}\}$.  Note that this forces $c'>0$. The claim follows, as $t\in\{m+1,\hdots,k\}$. 
     
     For the second part of the claim, notice that by the quantum Bruhat criterion, no values in $w'[i_1',\overline{t}]$ can lie between $w'[i_1']$ and $w'[\overline{t}]=\overline{\gamma}$. Similarly, no values in $w't_{t,\overline{i_1'}}[i_2',\overline{t}]$ can lie between $w'[i_2']$ and $w't_{t,\overline{i_1'}}[\overline{t}]=\overline{w'[i_1']}$, so no values in $w'[i,\overline{t}]$ can lie between $w'[i_2']$ and $\overline{\gamma}$. Continue the same way with the rest of the $(t,\cdot)$ transpositions and we see that no values in $w't_{i_1',\overline{t}}\hdots t_{i_r',\overline{t}}[i_r',\overline{t}]$ can lie between $w'[i_r']=\mu\in\{c',\hdots,c\}$ and $\overline{w'[i_{r-1}']}$, so no values in $w'[i,\overline{t}]$ can lie between $\mu$ and $\overline{\gamma}$.
     
     \vspace{12pt} We now wish to show that the existence of $a:=C(i)\prec b:=C'(l)  \prec c:=C'(i)$ implies Condition 2b.  First, suppose that $b<0$. Recall that $\overline{b}\prec a'\prec b\prec c'$.  We have already shown that $c'>0$ and so $c'<a'$ and $\overline{c'}>\overline{a'}$.  Then the reflection $(m,\overline{\imath})$ which transposes $\overline{c'}$ with $\overline{a'}$ is a downstep and, by the quantum Bruhat criterion, it must be that $a'$ and $c'$ are of different sign.  This means that $a'<0$. But then via Lemma~\ref{Lenart reorder lemma}, the reflections $(m,\overline{\imath_1}),\hdots(m,\overline{\imath_p})$ taking $\overline{a'}$ to $b$ in position $m$ must all be upsteps.  The quantum Bruhat criterion then gives that each transposition must maintain the same sign, and so $sign(\overline{a'})=sign(b)$, a contradiction.  Thus $b>0$.
     
     It remains to be shown that $\overline{b}\preceq a\prec b$. We do so by showing that $a=a'$, for which this inequality already holds.  Suppose that $a\neq a'$.  Then $C[i]=a$ by assumption, and $a'=C[j]$ for some $j\in\{1,\hdots,i-1\}\cup\{\overline{i-1},\hdots,\overline{1}\}$ by Condition 2a.  Then by Lemma~\ref{Lenart 7.1}, with $i=j$ and $l=m$, we get that $a'$ must appear in either $C_{m+1}[1,i]$ or $C_{m+1}[m+1,k]$.  But by a claim in the  proof of 2a, we saw that neither $a'$ nor $\overline{a'}$ can appear in the latter, and thus one of them must appear in the prior. Since $j<i$ the transposition $(m,\overline{i})$ occurs before the transposition $(m,\overline{j})$, the transposition $(m,\overline{i})$ cannot transpose the values $\overline{c'}$ with $a'$ as it had originally been constructed, a contradiction.  Thus $a=a'$ and Condition 2b holds.
     
     \vspace{12pt} We now wish to show that the existence of $a:=C(i)\prec b:=C'(l)  \prec c:=C'(i)$ implies implies Condition 2c. We first count the number of downsteps made with transpositions with values in positions $[1,i]$.
     
     First we look at the number of downsteps in $T_k,\hdots, T_{l+1}$.  Here, the downsteps including positions $1,\hdots,i-1$ are only those of stage IV.  By our construction thus far, we know that this transposes a negative value in $\{b+1,\hdots,\overline{b+1}\}$ with a positive value. This then transposes over $\overline{a}$ in position $\overline{i}$, so by the quantum bruhat criterion, we know that no such downstep exists.
     
     Second, we look at the number of downsteps in $T_l$.  By Condition 2b, $C[l]\in\{b+1,\hdots,\overline{b+1}\}$, and by hypothesis, $C'[l]=b$. Thus there must be at least one downstep during $T_l$. From Remark~\ref{C_m contains 1,...,b} we have that $\{1,\hdots,b\}$ are in $\pi_l[1,i]$ or $\pi_l[\overline{i},\overline{1}]$. By Lemma~\ref{Lenart reorder lemma}, the downstep must be a stage IV move with a value in some position in $\{1,\hdots,i\}$.  By Remark~\ref{One Downstep per row} there is at most one downstep in $T_l$, so there is exactly one downstep in $T_l$.  We note here that the value in position $i$ will never become negative again, as the only means of doing so would be with the root $(i,\overline{i})$ in $T_i$, but this would pass the target $b>0$.
     
     Third, we look at the downsteps in $T_{l-1},\hdots, T_{i}$. By similar argumentation to that of $T_k\hdots T_{l+1}$, we see that any possible downstep with values in $1,\hdots, i$ would pass over $b$ in position $l$, breaking the quantum bruhat criterion.

     Finaly, we look at the downsteps in $T_{i-1},\hdots,T_1$.  Notice that there are no downsteps with stages I,III without passing over $b$ in position $l$.  There are no stage II downsteps at all in type $B_n$.  So all downsteps are of stage IV. Each of these change two positions in $1,\hdots i$ from negative to positive. We note here that none of these positions can became negative again. Indeed the only way for this to happen would be with a stage II upstep, but this root has already been passed to use the stage IV downstep.
     
     We have seen that there are an even number of downsteps in positions $[1,i]$ contributed by $T_{i},\hdots,T_1$, one from $T_l$, and none from anywhere else. We noted along the way that none of these values will become negative again during the remainder of the transpositions.  This along with Lemma~\ref{pos order} and Remark~\ref{One Downstep per row} gives that there is an odd number of such negative to positive pairs in the said positions of $C$ and $C'$, as desired.
     
\end{proof}

\subsection{Necessary conditions for the construction of a segment of the quantum Bruhat path}\label{Nec cond for construction}

The following proposition shows a partial result for Proposition~\ref{Total Path Prop}.

\vspace{12pt} \begin{proposition}\label{Path Prop}
 Let $u,i,C,C'$ be as previously defined and assume that the pair of columns $CC'$ satisfies conditions 1 and 2.  Assume also that $C[i+1,k]=C'[i+1,k]$ for some $i$ with $1\leq i\leq k$.  Then there is a unique path $u = u_0,u_1,\hdots,u_q = v$ in the corresponding quantum Bruhat graph such that $v(i) = C'(i)$ and the edge labels form a subsequence of $\Gamma_{ki}$. Moreover, we have that if $u(l)\neq v(l)$, then $C(l)=u(l)\prec v(l)\preceq C'(l)$ for $l= 1,\hdots, i-1$.
\end{proposition}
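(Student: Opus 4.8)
The plan is to build the path by a single greedy scan of the block $\Gamma_{ki}=\Gamma^{ki}$, reproducing the inner loop of Algorithm~\ref{Mod-Greedy algorithm} for one column, and then to argue that this scan is forced and that it terminates at the target. The first observation is that every root of $\Gamma_{ki}$ involves position $i$, and that, under the hypothesis $C[i+1,k]=C'[i+1,k]$, none of them touches a position in $\{i+1,\ldots,k\}$; hence these positions stay fixed (and still equal to $C'$) throughout, and only position $i$, the positions $\{1,\ldots,i-1\}$, and the positions $\{k+1,\ldots,n\}$ (with their barred counterparts) can change. Scanning the roots in order, I retain a root precisely when it is an edge of the quantum Bruhat graph (Proposition~\ref{type B bruhat conditions}) whose effect on position $i$ is a strict upstep in the circular order $\prec_{C(i)}$ not passing $C'(i)$. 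Lemma~\ref{row monotonicity} then guarantees that the value in position $i$ increases monotonically and never overshoots $C'(i)$, the only possible exceptional first downstep occurring for the root $(k,k+1)$, i.e.\ when $i=k$, which I handle separately.

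The next step is to show the scan terminates with $v(i)=C'(i)$ and that the retained subsequence is the unique one achieving this. I would argue stage by stage: while the entry in position $i$ is positive and below $C'(i)$ it is advanced by Stage~I roots; its sign is flipped, when $C'(i)<0$, by the Stage~II root $(i,\overline{\imath})$; and negative values are advanced through Stages~III and~IV. At each step Condition~1 of Conditions~\ref{nec conditions for B columns} forbids the entry from colliding with an already-fixed value, while Condition~2, in the blocked-off form restated in this section, controls exactly the configuration $C(i)\prec C'(l)\prec C'(i)$ (an obstruction sitting in a fixed lower row $l>i$) and guarantees that a legal quantum Bruhat edge advancing position $i$ to its next required value always exists; together with the monotonicity of Lemma~\ref{row monotonicity} this both produces the target and pins down which roots must be used, giving uniqueness since the roots of each stage occur in a fixed order and each is either forced or forbidden by the criterion.

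The \emph{moreover} statement is the main obstacle, and I would isolate it as follows. Within $\Gamma_{ki}$ a fixed position $l<i$ is affected by exactly one root, the Stage~IV root $(i,\overline{l})$, whose reflection replaces the entry in position $l$ by $\overline{w(i)}$, where $w$ is the word immediately preceding this transposition; since Stage~IV is reached only after the Stage~II sign flip, $w(i)<0$, so the new entry $\overline{w(i)}$ is positive. Consequently position $l$ changes at most once, from $u(l)=C(l)$ to $v(l)=\overline{w(i)}$, and I must establish $C(l)\prec v(l)\preceq C'(l)$ in the order $\prec_{C(l)}$. That the change is an upstep follows directly from the criterion for the edge $(l,\overline{\imath})$ (Proposition~\ref{type B bruhat conditions}(2)): in case (2a) both entries are positive with $w(l)<\overline{w(i)}$, and in case (2b) $w(l)$ is negative while $\overline{w(i)}$ is positive, so in either case $\overline{w(i)}$ strictly follows $w(l)=u(l)$ in the circular order. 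The delicate point, where the global structure enters, is the bound $v(l)\preceq C'(l)$: I would combine Lemma~\ref{pos order} (the relative order of positive entries in rows below $i$), the monotone climb of position $i$ from Lemma~\ref{row monotonicity}, and the parity bookkeeping of negative-to-positive pairs established in the proof of Proposition~\ref{Reorder Nec} (Condition~2c) to exclude $\overline{w(i)}$ from overshooting $C'(l)$; the hardest case, in which $C'(l)$ itself lies in a blocked-off window, I would resolve by invoking Lemma~\ref{Lenart reorder lemma} to locate exactly when position $i$ crosses the value $C'(l)$ during the climb.
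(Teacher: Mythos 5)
Your outline reproduces the skeleton of the paper's argument (a greedy scan of $\Gamma_{ki}$, monotonicity via Lemma~\ref{row monotonicity}, separate treatment of the rows $l<i$), but it omits the blocked-off mechanism, which is precisely what makes the type $B_n$ case nontrivial, and this omission is fatal to both existence and uniqueness. Your retention rule --- keep a root exactly when it is a quantum Bruhat edge that strictly advances position $i$ without passing $C'(i)$ --- is the type $C_n$ rule, and it fails in type $B_n$: a root can meet all three requirements and yet must be skipped, because applying it would make the current word and $C'$ blocked off at $i$ by $C'(i)$, after which no subsequence of the remaining roots reaches the target (Lemma~\ref{no path for block off}); the example following Algorithm~\ref{Mod-Greedy algorithm}, where $(1,3)$ must be passed over in favor of $(1,\overline{3})$, is exactly such a failure. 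Dually, when $i=k$ and $u[1,k],C'$ are blocked off at $k$, the path must begin by passing the target with $(k,k+1)$, which your rule forbids and which you defer without argument. Your uniqueness claim, that each root is ``either forced or forbidden by the criterion,'' is therefore false as stated: what is actually needed is that a legal, non-passing upstep may be skipped only when taking it creates a blocked-off pair (Lemma~\ref{skip implies block off}), that a blocked-off pair admits no continuation (Lemma~\ref{no path for block off}), and that the target may be passed only by $(k,k+1)$ and only when the columns are blocked off at $k$ (Lemma~\ref{twice implies block off}). The static hypothesis that $CC'$ satisfies Condition 2 does not substitute for this dynamic check at intermediate words, and none of these three facts is proved, or even stated, in your proposal.

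The treatment of the \emph{moreover} statement also contains a concrete error. You argue that Stage IV is reached only after the Stage II sign flip, hence $w(i)<0$ at each Stage IV application and the new entry $\overline{w(i)}$ in row $l$ is positive. Both claims are false: Stage II need not fire at all, and Proposition~\ref{type B bruhat conditions}(2a) also allows Stage IV edges in which $w(l)$ and $w(\overline{\imath})=\overline{w(i)}$ are both negative, i.e.\ $w(i)>0$, so that row $l$ moves from one negative value to another. Concretely, take $n=3$, $i=k=2$, $u=\overline{3}\,1\,2$, so $C=(\overline{3},1)$, and $C'=(\overline{2},3)$; Conditions 1 and 2 hold (the latter vacuously), and the unique path is $(2,3)$ followed by $(2,\overline{1})$, the latter applied with $w(2)=2>0$ and changing row $1$ from $\overline{3}$ to $\overline{2}$. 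Your case analysis misses this branch entirely. More importantly, the substance of the claim --- the upper bound $v(l)\preceq C'(l)$ --- is left as a list of lemmas to ``combine''; in the paper this is Lemma~\ref{StageIV Monotonicity}, whose proof is a genuine pigeonhole argument (for every $x$ between $\overline{C'(l_1)}$ and the final value of position $i$, either $x$ or $\overline{x}$ must occupy one of the first $i$ positions of the current word, and the values admissible for $C'$ in those positions are too few), and nothing in your outline supplies that count.
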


\vspace{12pt} In this section, we will provide necessary conditions for the results of Proposition~\ref{Path Prop}.  Assume for the moment that a path with the property stated in Proposition~\ref{Path Prop} exists.  Let $T$ be the sequence of edge labels for this path.  Recall that the sequence of roots $\Gamma_{ki}$ were split into four stages.  We will factorize accordingly, giving $T=T_iT_{ii}T_{iii}T_{iv}$ and define $u_i:=uT_i$, $u_{ii}:=uT_iT_{ii}$, $u_{iii}:=uT_iT_{ii}T_{iii}$, and $u_{iv}:=uT=v$.

\vspace{12pt} The following lemmas give necessary conditions for the construction in the Proposition. They show that we will pass the target (cf. Remark~\ref{reason for mod remark}) only with the root $(k,k+1)$ exactly when the reordered columns $CC'$ are blocked off at $k$ and that we will skip a $Path\_C$ transposition exactly when the $Path\_C$ transposition would lead the resulting column to be blocked off with $C'$. We note here that these necessary conditions give the uniqueness of the path proposed in Proposition~\ref{Path Prop}.


\vspace{12pt} Due to the third part of the block off condition, we will often need to discuss the signs of values in the same row of two columns.  For two columns $L$ and $R$, we will denote the signs of position $i$ of these columns by $sgn(l_i)sgn(r_i)$. 

\begin{example}\label{+- example}
{\rm Consider the columns $C = (1,4,\overline{2},\overline{3},8,\overline{5})$ and $C' = (1,5,\overline{2},6,8,3)$. We then say that the values of $CC'$ in position $4$ are -+.  We will commonly discuss how these sign pairs change after applying transpositions to the left word.  We see that $Ct_{6\overline{4}}$ and $C'$ then has a ++ pair in position $4$. 
$$\hspace{10pt} C \hspace{4pt} C' \hspace{20pt} Ct_{6\overline{4}} \hspace{4pt} C'$$
$$\begin{array}{ll} \tableau{{1}&{1}\\{4}&{5}\\{ \overline{2}}&{ \overline{2}}\\{\overline{3}}&{6}\\{8}&{8}\\{\overline{5}}&{3}} \end{array} 
 \! \begin{array}{c} \\ \rightarrow \end{array}\! \begin{array}{ll}\tableau{{1}&{1}\\{4}&{5}\\{ \overline{2}}&{ \overline{2}}\\{5}&{6}\\{8}&{8}\\{3}&{3}} \end{array} 
 \!
  \,$$
  
  }
  \end{example}

We note that we can restate part 3 from Definition~\ref{block-off def} as \textit{there are an odd number of $-+$ pairs in positions 1 through $i$ in the coloumns $CC'$}.
\begin{lemma}\label{no path for block off} Suppose that $u[i+1,k] = C'[i+1,k]$ and $u,C'$ are blocked off at $i$ by $b:=C'(i)$.  Then, if we never pass the target, there is no $T\subset \Gamma_{ki}$ such that $uT[1,k]=C'$.
\end{lemma}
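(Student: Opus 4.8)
The plan is to argue by contradiction. Suppose there is a sequence $T\subseteq\Gamma_{ki}$ whose successive applications form a path $u=u_0,u_1,\ldots,u_q$ in the quantum Bruhat graph with $u_q[1,k]=C'$, and which never passes the target. Write $a:=u(i)$. Condition~$(1)$ of Definition~\ref{block-off def} gives $|a|\le b$, and Condition~$(2)$ says that the absolute values $\{1,\ldots,b\}$ all occur among $|u(1)|,\ldots,|u(i)|$; since $u$ is a signed permutation each of these occurs exactly once in positions $1,\ldots,n$, hence none occurs in positions $k+1,\ldots,n$. Because the block-off index satisfies $i\le k-1\le n-1$, we have $i<n$, so Lemma~\ref{row monotonicity} applies and the entry in position $i$ increases weakly in the circular order $\prec_a$ up to $b$, never passing it.

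First I would show that \emph{only stage IV roots} can occur in $T$. A stage I or III root $(i,m),(i,\overline{m})$ with $m>k$ exchanges position $i$ with a position outside the window $[1,k]$; as noted, positions $k+1,\ldots,n$ carry only entries of absolute value $>b$, and this persists because stages I--III are processed before stage IV, so no such exchange has yet occurred. Moving such an entry into position $i$ would, given $|a|\le b$ and the description of $\prec_a$, land strictly beyond $b$ in $\prec_a$, passing the target, while a decreasing step is excluded by monotonicity; hence no stage I or III root fires and positions $k+1,\ldots,n$ are never disturbed. The stage II root $(i,\overline{\imath})$ is also excluded: for $w(i)>0$ it sends the entry to $\overline{w(i)}\succ_a b$ (passing the target), and for $w(i)<0$ it is not a quantum Bruhat edge unless $i=n$ by Proposition~\ref{type B bruhat conditions}(3), which fails since $i<n$.

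Thus $T$ consists only of stage IV roots $(i,\overline{\jmath})$, $j<i$, each occurring at most once and, within the window, altering only positions $i$ and $j$ by replacing $w(i),w(j)$ with $\overline{w(j)},\overline{w(i)}$. Consequently both positions change sign exactly when $w(i)$ and $w(j)$ share a sign. The key step is to exclude the ``both positive'' case $w(i),w(j)>0$: it would send position $i$ to the negative letter $\overline{w(j)}$, but no negative letter $v$ satisfies $w(i)\prec_a v\preceq_a b$ when $w(i)>0$ (for $a>0$ every negative letter is $\succ_a b$; for $a<0$, having already crossed to $w(i)>0$, the negative letters $\preceq_a b$ are exactly $\overline{|a|},\ldots,\overline{1}$, all $\prec_a w(i)$), so such a move would decrease position $i$ or pass $b$, contradicting monotonicity. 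Hence no entry that is positive in $u$ can become negative: there are no ``$+-$'' disagreements between $u$ and $C'$ among positions $1,\ldots,i$, and every negative-to-positive sign change is produced by a ``both negative'' move, which flips two positions at once. Finally, once position $i$ becomes positive it stays positive (the remaining moves are ``both negative'', which needs $w(i)<0$, or sign-preserving), so at most one ``both negative'' move fires; therefore the number of positions $j\le i$ with $u(j)<0$ and $C'(j)>0$ equals $0$ or $2$, in either case \emph{even}. This contradicts Condition~$(3)$ of Definition~\ref{block-off def}, which requires it to be odd.

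The step I expect to be the main obstacle is the reduction to stage IV: establishing rigorously that the saturation hypothesis (Condition~$(2)$) forces every stage I and III root to overshoot the small target $b$, and that this remains valid throughout the process. The potential circularity there is resolved by the ordering of the stages, since stages I--III are processed before any stage IV root, so the exterior of the window stays untouched while the overshoot argument is being run. Once ``both positive'' stage IV moves are excluded, the concluding parity bookkeeping is routine.
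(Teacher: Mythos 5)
Your argument is correct for the lemma as literally stated, and it is cleaner than the paper's proof: where the paper splits into the cases $u(i)=\overline{b}$ (arguing directly that no admissible root exists) and $|u(i)|<b$ (a parity count), you run a single parity argument covering both. The core mechanism is the one the paper uses in its second case --- once stages I--III are excluded via condition (2) of Definition~\ref{block-off def}, the only sign changes available inside $\Gamma_{ki}$ are stage IV downsteps, which by Proposition~\ref{type B bruhat conditions}(2) turn two negative entries positive at once, and at most one can fire since position $i$ never returns to a negative value; hence the number of $-+$ disagreements in positions $1,\ldots,i$ is $0$ or $2$, contradicting condition (3). Two corrections, however. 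First, your restriction $i\le k-1$ is not part of the hypothesis: the lemma is needed precisely at $i=k$ (this is why Algorithm~\ref{Mod-Greedy algorithm} must pass the target with $(k,k+1)$ when the columns are blocked off at $k$; cf.\ Lemma~\ref{twice implies block off} and Lemma~\ref{no block off above ith row}). Fortunately your proof only ever uses $i<n$, so it covers $i=k$ whenever $k<n$; in the remaining case $i=k=n$ the root $(n,\overline{n})$ is a quantum Bruhat edge by Proposition~\ref{type B bruhat conditions}(3b) even when $w(n)<0$, and the lemma genuinely fails there (for $n=2$ take $u=2\,\overline{1}$, $C'=(2,1)$: these are blocked off at $2$ by $1$, yet the single edge $(2,\overline{2})$ reaches $C'$ without passing the target) --- a defect shared by the paper, whose assertion that $(i,\overline{\imath})$ is never an edge for negative $w(i)$ also tacitly assumes $i<n$. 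Second, your stage II exclusion for $w(i)>0$ claims $\overline{w(i)}\succ_a b$; when $a<0$ and $0<w(i)\le|a|$, the letter $\overline{w(i)}$ lies inside the arc and precedes $w(i)$, so the step is decreasing rather than target-passing --- the conclusion stands, but by monotonicity, not overshoot.

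The substantive divergence from the paper is one of scope, and it matters. The paper's proof does not stop once position $i$ is treated: having shown that any $T_i\subset\Gamma_{ki}$ with $uT_i(i)=b$ leaves an odd number of $-+$ pairs among positions $1,\ldots,i-1$, it continues (the ``$T\subset rev(\Gamma)$'' portion) to show that the subsequent chains $T_{i-1},\ldots,T_1$ cannot repair them either: $++$ positions stay positive, and $-+$ positions can be corrected only two at a time by stage IV downsteps. That stronger unreachability --- with all of $\Gamma_{ki}\Gamma_{k,i-1}\cdots\Gamma_{k,1}$ still available --- is what the paper actually invokes later, e.g.\ in the proof of Lemma~\ref{nec condition 3} and in the remark that a valid path may never pass through a blocked-off configuration. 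Your pivotal reduction, that only stage IV roots can occur, is an artifact of confining $T$ to $\Gamma_{ki}$: stage I--III roots of the later chains act at positions $i'<i$, are not excluded by the block-off data (entries outside positions $1,\ldots,i$ merely have absolute value $>b$, but the targets $C'(i')$ may as well), and can flip a single sign. So you have proved the statement as written, but not the statement the paper proves and uses; closing that gap would require essentially the second half of the paper's argument.
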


\begin{proof} Let $u$ and $C'$ be as hypothesised.  By the block off condition, either $u(i) = \overline{b}$ or $|u(i)|<b$.  We first consider the case where $u(i) = \overline{b}$.  Since our target value is in position $\overline{\imath}$, the only root available to us are those in stage I and stage II.  However, by the block off condition, there is no $k<l\leq n$ such that $\overline{b}\prec u(l)\prec b$ and the root $(i,\overline{\imath})$ does not follow the quantum Bruhat graph criterion, as $\overline{b}<0$.  Since we do not pass the target, there is no path.

We now consider the case where $|u(i)|<b$. First note that there are an odd number of $-+$ pairs in $u[1,i]$, $C'[1,i]$ due to the block off condition.  We claim that if there is some subchain $T_i\subset \Gamma_{ki}$ such that $uT_i(i)=b$, then there are still an odd number of $-+$ pairs between $uT_i[1,i-1]$ and $C'[1,i-1]$.  Indeed, if $1\leq u(i)<b$, then by Lemma~\ref{row monotonicity}, only upsteps will be used in $T_i$ and so $sgn(u(l)) = sgn(uT_i(l))$ for all $1\leq l <i$.  We also consider the case $\overline{b}\prec u(i)\preceq \overline{1}$.  Here, there must be a downstep at some point in $T_i$.  It can not be in stage I or III, otherwise we would pass the target due to the block off condition.  There are no stage II downsteps by quantum Bruhat criterion.  Thus the downstep must be in stage IV.  This single downstep will not only change the values in position $i$ from $-+$ to $++$, but also a $-+$ to a $++$ in some position $1\leq l < i$.  Therefore, regardless of the sign of $u(i)$, the number of $-+$ values in $uT_i[1,i-1]$ and $C'[1,i-1]$ will be odd.

The conclusion of the proof is done by showing that if there is a path $T\subset rev(\Gamma)$ such that $uT_iT = C'$, there must have been an even number of $-+$ pairs between $uT_i[1,i-1]$ and $C'[1,i-1]$.  We will do so by first showing that no $++$ pairs will give way to $-+$ at any time during $T$ and further that all $-+$ pairs are only changed to $++$ pairs via stage IV downsteps, each of which turn two $-+$ pairs to $++$.  For the remainder of the proof, let $u' = uT_i$ and note that $u'[i,k] = C'[i,k]$ and $u'(i) = b = C'(i)$.

We first consider some $1\leq l <i$ where $u'(l)>0$ and $C'(l)>0$ and will show that position $l$ will never be negative thoughout the application of the remainder of the roots in $T$. First note that by the quantum Bruhat criterion, we have that position $l$ remains positive throughout  $T_{i-1}T_{i-2}\hdots T_{l+1}(l)$. Let $s:=u'T_{i-1}T_{i-2}\hdots T_{l+1}(l)$. Then we have the following two cases: either $1\leq s \leq C'(l)$ or $C'(l)< s\leq n$.  For the first case, we are done via Lemma~\ref{row monotonicity}.  Now suppose that $C'(l)< s \leq n$.    During stages I, II, and III of $T_l$, we can only transpose with values between $b$ and $\overline{b}$, due to the nature of the block off condition.  This means that at some point during stage IV of $T_l$ two values would have to transpose over $\overline{b}$ in position $\overline{\imath}$, breaking the quantum Bruhat criterion.  Therefore only the first case holds and no $++$ pair between $u'[1,i-1]$ and $C'[1,i-1]$ will ever give way to a $-+$ pair during the remainder of $T$.

Finally, we show that all $-+$ pairs in $u'[1,i-1]$ and $C'[1,i-1]$ can only become $++$ pairs via stage IV downsteps, effectively changing two such $-+$ pairs at a time. We now consider some $1\leq l <i$ such that $u'(l)<0$ and $C'(l)>0$ and again let $s = u'T_{i-1}T_{i-2}\hdots T_{l+1}(l)$.  Suppose that during $T_{i-1}T_{i-2}\hdots T_{l+1}(l)$ there are no stage IV down steps using position $l$. We then have that $s$ is negative.  Further, we note that $1\leq C'(l) <b$, otherwise there would come a point where we would need to transpose over $b$ in position $i$. Due to the nature of having been blocked off, nothing between $s'$ and $t$ can be in stages I or III.  As we do not pass the target, none of the roots in those positions will be used.  Further, we can not use a stage II move, as downsteps are not promitted here by the quantum Bruhat criterion.  Therefore there must be a downstep in stage IV.  This concludes the proof.
\end{proof}

\begin{remark} 
{\rm Since we never pass the target except potentially at $(k,k+1)$, for a path to exist, two columns can never be blocked off with each other except for possibly at $u[1,k],C'$.  Further, Lemma~\ref{no path for block off} tells us that there may be times when the $Path\_C$ algorithm would call for a certain transposition, but it can not be included in the path, as its use  would cause the current column to be blocked off with $C'$.  The following lemmas will show that block-off avoidance is the only time a path is allowed to skip a root called by the $Path\_C$ algorithm or pass the target with the root $(k,k+1)$.}
\end{remark}

\begin{lemma}\label{skip implies block off}Let the sequence $m_1,m_2,\hdots , m_r$ be such that $u t_{i,m_1} t_{i,m_2}\hdots t_{i,m_r}(i)=C'(i)$ 
and the roots $(i,m_l)$ form a path in the corresponding quantum Bruhat graph.  Suppose that there is some
\\ $m_d\in \{k,\hdots,\overline{k},\overline{i},\hdots\overline{1}\}$, such that at the current word $w$, we have $w(i)\prec wt_{i,m_d}(i)\preceq C'(i)$, but $m_d$ is not in $\{m_1,m_2,\hdots,m_r\}$.  Then $wt_{i,m_d}$ was blocked off at $i$ by $C'(i)$.
\end{lemma}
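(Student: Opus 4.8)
The plan is to verify directly the three defining conditions of Definition~\ref{block-off def} for the pair of columns $(wt_{i,m_d}[1,k],\,C')$ at position $i$ with $b:=C'(i)$. Write $c:=w(m_d)=wt_{i,m_d}(i)$, so the hypothesis reads $w(i)\prec c\preceq b$ in the circle order $\prec=\prec_{C(i)}$ (the value $w(i)$ lies on the monotone chain emanating from $C(i)$, so the two orders agree on the relevant range). As a first reduction I would establish $b>0$: since the increments in position $i$ are monotone and terminate at $C'(i)$ by Lemma~\ref{row monotonicity}, while stage~II downsteps are forbidden in type $B_n$, the sign argument from the proof of Proposition~\ref{Reorder Nec} applies and forces $b>0$. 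Granting this, condition~(1) reduces to locating $c$ inside $\{\overline{b},\ldots,b\}$, which I read off from $w(i)\prec c\preceq b$ once condition~(2) is in force and once I note that the skip forces $c\prec b$ strictly: an equality $c=b$ would reach the target by a valid edge and hence could not be bypassed.

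The heart of the argument is condition~(2), namely $\{1,\ldots,b\}\subseteq\{|wt_{i,m_d}(j)|\}_{1\le j\le i}$ and $\{1,\ldots,b\}\subseteq\{|C'(j)|\}_{1\le j\le i}$. I would argue by contradiction: suppose some value of absolute value at most $b$ is missing from the first $i$ entries. Because the path reaches $b$ in position $i$ using only roots of $\Gamma_{ki}$ occurring after $(i,m_d)$, and position $i$ climbs monotonically from $w(i)$ to $b$ by Lemma~\ref{row monotonicity}, some later root $(i,m_e)$ must move position $i$ across the value $c$, which—having skipped $(i,m_d)$—still occupies position $m_d$ (or, when $m_d$ is barred, the position dictated by the signed-permutation action). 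Using the stage I--IV ordering of $\Gamma_{ki}$, the index $m_d$ is index-between $i$ and $m_e$, so the quantum Bruhat criterion of Proposition~\ref{type B bruhat conditions} is violated by $c$ unless $c$ has first been evacuated from that position; tracing the only admissible evacuation (through a barred root, exactly as in Lemma~\ref{Lenart reorder lemma} and Remark~\ref{C_m contains 1,...,b}) forces the initial segment $1,\ldots,b$ to be exhausted in positions $1,\ldots,i$ of both columns. This simultaneously yields condition~(1), since once $\{1,\ldots,b\}$ fills the first $i$ positions and $w(i)\prec c\prec b$, the only possibility is $|c|\le b$, with $|c|=b$ exactly when $c=\overline{b}$.

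For condition~(3), the parity of the number of $-+$ pairs among the first $i$ positions, I would reuse the downstep bookkeeping developed for Condition~2c in the proof of Proposition~\ref{Reorder Nec}. By Remark~\ref{One Downstep per row} each row admits at most one downstep, stage~II downsteps do not occur in type $B_n$, and by Lemma~\ref{pos order} no positive entry among positions $1,\ldots,i-1$ ever turns negative; hence every sign change recorded in those positions is effected by a stage~IV downstep, each of which flips two $-+$ pairs to $++$ at once. Balancing these even contributions against the single sign change that brings position $i$ to the positive value $b$ shows that the count of $-+$ pairs in the first $i$ positions of $(wt_{i,m_d},C')$ is odd, which is precisely condition~(3).

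I expect the main obstacle to be the evacuation analysis inside condition~(2): unlike the unbarred case, a barred root $(i,\overline{\jmath})$ acts on four positions simultaneously, so ruling out a ``circumvention'' in which the obstructing value $c$ is shifted out of an index-between position before the path crosses it demands a careful tracking of all four affected entries and their signs, in the spirit of Lemma~\ref{Lenart reorder lemma}. Should the direct verification prove unwieldy, the cleanest fallback is the contrapositive: assuming $wt_{i,m_d}$ is \emph{not} blocked off with $C'$, combine the monotonicity of Lemma~\ref{row monotonicity} with the uniqueness of the monotone path to conclude that $(i,m_d)$ must after all belong to $\{m_1,\ldots,m_r\}$, contradicting the hypothesis that it was skipped.
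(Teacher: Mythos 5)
Your high-level plan --- verifying the three clauses of Definition~\ref{block-off def} for the pair $(wt_{i,m_d}[1,k],C')$ --- is the same as the paper's, and you correctly name the relevant tools (Lemma~\ref{row monotonicity}, Proposition~\ref{type B bruhat conditions}, the downstep bookkeeping from Proposition~\ref{Reorder Nec}). But the backbone of the paper's proof is missing, and several of your steps fail without it. The paper starts by observing that, since position $i$ climbs monotonically toward $C'(i)$, some later root must carry position $i$ \emph{across} the skipped value, and that the type $B_n$ criterion tolerates such a crossing in exactly two situations, both negative-to-positive sign changes: a stage~III root with $u(m_d)=\overline{b}$ (this is the only situation your ``evacuation'' picture describes --- the crossing root is $(i,\overline{m_d})$ itself), or a stage~IV root with $u(m_d)=\overline{a}$, in which case $m_d=\overline{\imath}$, the skipped root is the stage~II root $(i,\overline{\imath})$, and the obstructing value changes in lockstep with position $i$ rather than sitting frozen at $m_d$. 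All three block-off clauses are then read off inside each case. Without this dichotomy: (i) your opening claim $C'(i)>0$ is unsupported, since monotonicity plus the absence of stage~II downsteps do not rule out a negative target --- positivity comes precisely from the forced negative-to-positive crossing, after which no positive-to-negative change is available; (ii) your condition-(2) argument presumes the skipped value ``still occupies position $m_d$'' until crossed, which is false in Case~2; (iii) your condition-(3) count charges the odd unit to ``the single sign change that brings position $i$ to the positive value $b$,'' i.e.\ it presumes $wt_{i,m_d}(i)<0$, again a Case~1 feature that need not hold in Case~2.

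There are two further problems. First, your parity count has no baseline: the paper anchors it by noting that the genuine path continues past the crossing, so by Lemma~\ref{no path for block off} the post-crossing pair $(w't_{i,m_j}[1,k],C')$ cannot be blocked off; since clauses (1)--(2) are already established for it, its number of $-+$ pairs must be \emph{even}, and since $wt_{i,m_d}$ differs from $w't_{i,m_j}$ only in position $i$ (negative versus positive there), the count for $(wt_{i,m_d},C')$ is odd. Your direct count never fixes such a reference parity, and it also conflates stage~IV downsteps occurring in the remainder of $T_i$ (which flip only one position below $i$) with those in $T_{i-1},\ldots,T_1$ (which flip two), so ``even contributions'' is not automatic. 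Second, your fallback is circular: the uniqueness of the path (Remark~\ref{uniqueness of path}) is deduced \emph{from} this lemma together with Lemmas~\ref{row monotonicity}, \ref{no path for block off} and~\ref{twice implies block off}, so it cannot be invoked to prove it.
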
 

\begin{proof} In the hypothesised construction, at some point in the process of applying the transpositions $(i,m_l)$ to $u$, the entry in position $i$ has to change from a value $a$ to a value $b$ across the value $u(m_d)$ where $a\prec u(m_d)\prec b$.  This violates the quantum Bruhat graph criterion in all but the following two special cases:
 
 Case 1: $a<0$ and $b>0$, the current transposition $(i,m_j)$ is in $T_{iii}$, and $u(m_d)=\overline{b}$.
 
 Case 2: $a<0$ and $b>0$, the current transposition $(i,m_j)$ is in $T_{iv}$, and $u(m_d)=\overline{a}$.
 
 \vspace{12pt} We show that the lemma holds for case one and case two follows similarly. Let $w' := ut_{i,m_1}\hdots t_{i,m_{j-1}}$.  Then $w't_{i,m_j}(i) = b$. We need $b=C'(i)>0$.  Note that after stage II, the quantum Bruhat criterion no longer allows any positive to negative sign changes.  Since $b>0$, it must be that the target value $t = C'(i)$ is positive as well.  Lemma~\ref{row monotonicity} then gives that $1\leq b \leq t \leq n$, providing us with the desired $|u(m_d)|\leq t$ as $u(m_d) = \overline{b}$ by hypothesis. 
 
 We now show that $\{1,\hdots, t-1\}\subset\{|w(j)|\}_{1\leq j\leq i}$ and $\{1,\hdots, t\}\subset\{|C'(j)|\}_{1\leq j\leq i}$.  First note that we cannot have $b < \overline{a} < t$.  Otherwise there would have to be root later in the sequence transposing two values $c$ and $d$ over $\overline{a}$ in position $\overline{m_j}$.  Again, this is only possible if $d = \overline{\overline{a}} = a$, but that would mean that the transposition would pass the target.  Lemma~\ref{row monotonicity} gave that this is only possible for the root $(k,k+1)$, but $m_j$ is in $T_{iii}$ and is therefore not $k+1$. So we now have
  $$\overline{n}\preceq a \preceq \overline{b}\preceq\overline{1}\prec 1\preceq b\preceq t \preceq \overline{a}\preceq n.$$ 
  
  By the quantum Bruhat criterion, there are no values between $a$ and $b$ in $w'[i,m_j]$ except $\overline{b}$ in position $\overline{m_j}=m_d$. Suppose that some value between $b$ and $t$ lies in $w'[i,m_j]$.  Then for $w'(i)$ to have the value $a$, either we have $i=k$ and we passed the target during $(k,k+1)$, or we skipped a different $Path\_C$ step as well.  The latter cannot be, as it would then require another downstep, of which there can only be one for each $T_i$. The prior cannot be true either, as the passing of the target would lead to the need for an aditional downstep as well.
 
 Thus no values between $a$ and $t$ can lie in $w'[i,m_j]$ other than $b$ and $\overline{b}$.  This means that no values in $[\overline{t}]-\{b,\overline{b}\}$ can lie in $w'[i,m_j]$. Since the root $(i,m_j)$ is in $T_{iii}$, we know that $w[1,i-1] = w'[1,i-1]$. Thus $\{1,\hdots,t-1\}\setminus\{b\}\subset\{|w(j)|\}_{1\leq j\leq i}$. We further note that during $T_l,T_{l-1},\hdots,T_1$ none of the values in $[\overline{b}]$ can be transposed out of positions $1$ through $i$ without passing over $t$ in position $i$.  Thus $\{1,\hdots,t\}\subset\{|C'[j]|\}_{1\leq j\leq i}$ as desired.  
 
 We now show that $wt_{i,m_d}[1,i]$ and $C'[1,i]$ have an odd number of $-+$ pairs. Since we have already shown that conditions $2a$ and $2b$ hold and $1\leq b\leq t$, there must be an even number of minus plus pairs between $w't_{i,m_j}$ and $C'$ in the first $i$ positions.  Otherwise, the application of the root $(i,m_j)$ would cause $w't_{i,m_j}[1,k]$ to be blocked off with $C'$ at $i$ by $t$.  This then means that there would have been an odd number of minus plus pairs between $wt_{i,\overline{m_j}}$ and $C'$ in the first $i$ positions.  Thus the use of the root $(i,m_d)$ would have caused the culumns $wt_{i,m_d}[1,k]$ and $C'$ to be blocked off at $i$ by $t$.

\end{proof} 
 
\begin{remark}\label{skip only in stage I} 
{\rm The proof of Lemma~\ref{skip implies block off} also gives us that a skipped $Path\_C$ transposition may only occur during stage I.} 
\end{remark}

\begin{lemma}\label{twice implies block off} Suppose that we have a path such that the root $(k,k+1)$ is used and its application to the word $u$ passes the target in position $k$.  Then the columns $u[1,k]$ and $C'$ are blocked off at $k$ by $C'(k)$.\end{lemma}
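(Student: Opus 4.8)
The plan is to verify directly the three clauses of Definition~\ref{block-off def} for the pair $u[1,k], C'$ at position $k$ with $b = C'(k)$, running closely parallel to the proof of Lemma~\ref{skip implies block off}; the overshoot root $(k,k+1)$ here plays the role that the skipped root played there. Write $a := u(k)$, $c := u(k+1) = ut_{k,k+1}(k)$ and $t := C'(k)$, so that the hypothesis that $(k,k+1)$ passes the target in position $k$ reads $a \prec t \prec c$ in the circular order $\prec = \prec_a$. Note first that, since $(k,k+1)$ belongs to stage I of $\Gamma^{kk}$, its occurrence forces $k+1 \le n$, hence $k \le n-1$.

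First I would settle the sign and size of $t$ (clause (1)). After the overshoot, position $k$ holds the value $c \succ t$, and by Lemma~\ref{row monotonicity} every subsequent root of $T_k$ moves position $k$ monotonically until it reaches $t$. Exactly as in the case analysis of Lemma~\ref{skip implies block off} (its Cases 1 and 2), the only quantum-Bruhat-admissible way to recover from an overshoot and land on a value $t$ with $a \prec t \prec c$ is through a sign-changing stage III or stage IV step, which forces $t > 0$ and $|a| \le t$, with $|a| = t$ occurring precisely when $a = \overline{t}$.

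Next I would prove clause (2), that $\{1,\dots,t\} \subseteq \{|u(1)|,\dots,|u(k)|\}$ and $\{1,\dots,t\} \subseteq \{|C'(1)|,\dots,|C'(k)|\}$. The second inclusion holds because $C'$ is the reordering of an extended split KN column whose bottom entry is $t > 0$, so $\{1,\dots,t\}$ necessarily occurs among the absolute values of its entries. For the first inclusion I would repeat the argument of Proposition~\ref{Reorder Nec} and of Lemma~\ref{skip implies block off}: were some value of $[\overline{t}]$ seated below the $k$-th position, completing the path to $C'$ would require a root transposing a pair across $t$ in position $k$, or across $\overline{t}$ in position $\overline{k}$, violating the quantum Bruhat criterion. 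Clauses (1) and (2) together give $t \le k \le n-1 < n$, as required.

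Finally, clause (3): the number of $-+$ pairs of $u[1,k], C'$ in rows $1,\dots,k$ is odd. Here I would transcribe the sign bookkeeping from the last paragraph of the proof of Lemma~\ref{skip implies block off}, supported by Lemma~\ref{pos order} and Remark~\ref{One Downstep per row}: in rows $1,\dots,k-1$ every sign change is produced by a stage IV downstep, and each such downstep flips two $-+$ pairs to $++$ simultaneously, while the overshoot step itself accounts for exactly one further sign change; balancing these flips against the requirement that the path genuinely terminate at $C'$ pins the parity at odd. The main obstacle, and the step I would spend the most care on, is this parity count, because the overshoot wraps around the circular order: one must first establish the invariant (as in Lemma~\ref{no path for block off}) that no $++$ pair among the top $k$ rows can ever revert to $-+$ during the remainder of the path, and only then is the count of $-+$ pairs forced, yielding the block-off at $k$ by $C'(k)$.
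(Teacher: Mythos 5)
Your plan misses the one observation that makes this lemma immediate, and the details you supply in its place contain steps that fail. The paper's proof is a two-line reduction to Lemma~\ref{skip implies block off}: set $u':=u(k,k+1)$. The overshoot hypothesis $u(k)\prec C'(k)\prec u(k+1)$ (in $\prec_{u(k)}$) says precisely that, in the circular order based at $u'(k)=u(k+1)$, one has $u'(k)\prec u't_{k,k+1}(k)=u(k)\prec C'(k)$; in other words, $(k,k+1)$ is a \textit{Path\_C} transposition at the word $u'$, and the remainder of the given path, which starts at $u'$, never applies it. This is exactly the hypothesis of Lemma~\ref{skip implies block off} with $w=u'$ and $m_d=k+1$, and its conclusion is that $u't_{k,k+1}[1,k]=u[1,k]$ and $C'$ are blocked off at $k$ by $C'(k)$, since the reflection $(k,k+1)$ is an involution. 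Any direct verification of the three clauses of Definition~\ref{block-off def} therefore amounts to re-proving Lemma~\ref{skip implies block off} at the word $u'$; that is what you attempt, but two of your steps are not correct as written.

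First, you justify the inclusion $\{1,\ldots,t\}\subseteq\{|C'(1)|,\ldots,|C'(k)|\}$ by asserting that an extended split KN column with bottom entry $t>0$ must contain $1,\ldots,t$ in absolute value. That assertion is false: in type $B_3$ the KN column with entries $2,3$ requires no splitting or extension (so it equals its own right column), has bottom entry $3$, and contains no $1$. In the paper this inclusion is never a property of the shape of $C'$ alone; it is forced by the path, as in the proof of Lemma~\ref{skip implies block off}: no value of $[\overline{t}]$ can be transposed out of positions $1,\ldots,k$ during $T_{k-1},\ldots,T_1$ without passing over $t$ in position $k$, violating the quantum Bruhat criterion. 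Second, your parity argument for clause (3) rests on the claim that ``the overshoot step itself accounts for exactly one further sign change,'' but $(k,k+1)$ is the stage I reflection $t_{k,k+1}t_{\overline{k+1}\,\overline{k}}$: it permutes values between positions and changes no signs at all. The odd parity does not come from the overshoot step; in the paper it comes from the terminal argument in Lemma~\ref{skip implies block off}, where the later sign-changing recovery step (stage III or IV) must produce a configuration with an \textit{even} number of $-+$ pairs in the first $k$ rows --- otherwise that configuration would be blocked off, contradicting the existence of the rest of the path by Lemma~\ref{no path for block off} --- and undoing that single parity flip forces the count at $u$ to be odd. Your ``balancing'' sentence does not substitute for this argument, so clause (3) remains unproven in your write-up.
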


\begin{proof}Note that this set up is equivalent to having some $u' = u(k,k+1)$ and skipping the root $(k,k+1)$ which would now be called by the $Path\_C$ algorithm.  Then by Lemma~\ref{skip implies block off}, we know that $u'(k,k+1)[1,k]$ is blocked off with $C'$ at $k$ by $C'(k)$.  The proof concludes with the realization that $u'(k,k+1) = u$.
\end{proof}

\begin{remark}\label{uniqueness of path}
{\rm The Lemmas~\ref{row monotonicity},~\ref{no path for block off},~\ref{skip implies block off}, and \ref{twice implies block off} give conditions that dictate a unique path.}
\end{remark}

\subsection{Constructing a segment of the quantum Bruhat path.}\label{constructing segment of path in B}

In this section we will provide an explicit algorithm for the unique path following the coditions set by Lemmas~\ref{row monotonicity},~\ref{no path for block off},~\ref{skip implies block off}, and`\ref{twice implies block off}.  Further, we will show that this path is the desired path for Proposition~\ref{Path Prop}.

\vspace{12pt} First we distinguish the following two cases:

\begin{enumerate}
\item $C(i)\preceq C'(i) \prec \overline{C'(i)}$

\item $C(i)\preceq\overline{C'(i)}\prec C'(i)$.
\end{enumerate}

We will also need the following notation:

\begin{enumerate}
\item $M_I(u,i,C'):=max(\{u(i)\}\cup\{u(l): k +1\leq l\leq n, u(i)\prec u(l)\preceq C'(i)\})$

\item $M_{III}(u,i,C'):=max(\{\pm u(i)\}\cup\{u(l): k +1\leq l\leq \overline{k+1}, u(i)\prec u(l)\preceq C'(i)\})$
\end{enumerate}

\vspace{12pt}  The following lemma gives a little insight into the nature of $M_I$.  Its proof mirrors a similar lemma given by Lenart in~\cite{Lenart 2012} but cators to type $B_n$.

\vspace{12pt}\begin{lemma}\label{lenart lemma} Under the hypothesis of Proposition~\ref{Path Prop}, in Case 2 we have $$\overline{C'(i)}\preceq M_I(u,i,C')\preceq C'(i).$$\end{lemma}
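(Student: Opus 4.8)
The plan is to prove the two inequalities separately, treating the upper bound as immediate and concentrating all the work on the lower one. For the upper bound $M_I(u,i,C')\preceq C'(i)$ there is nothing to check: every element of the set defining $M_I$ lies weakly below $C'(i)$ in $\prec_{u(i)}$ — the seed $u(i)$ because it is the minimum of $\prec_{u(i)}$, and each competitor $u(l)$ because the defining condition already demands $u(l)\preceq C'(i)$ — so their maximum is $\preceq C'(i)$.

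For the lower bound, write $a:=u(i)=C(i)$ and $t:=C'(i)$, so that the Case 2 hypothesis reads $a\preceq \overline t\prec t$ in $\prec_a$. If $a=\overline t$ the claim is trivial, so I may assume $a\prec \overline t$; then $a$ lies strictly before the forward arc $A:=[\overline t,t]$ of $\prec_a$, whence $a\notin A$, and the desired inequality $\overline t\preceq M_I$ becomes equivalent to exhibiting a \emph{middle} position $l\in\{k+1,\dots,n\}$ with $u(l)\in A$ (the requirement $a\prec u(l)$ being automatic). Thus the entire problem reduces to producing a single $A$-value among positions $k+1,\dots,n$.

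The first key step is that $A$ is closed under negation $x\mapsto\overline x$. Indeed all orders $\prec_c$ linearize one and the same cyclic order, so the forward arc from $\overline t$ to $t$ is independent of the cut point provided $\overline t\prec t$ holds; this is exactly the Case 2 inequality, and since it is negation-stable it holds in $\prec_{\overline a}$ as well, which forces $A$ to be symmetric. Concretely $A=\{x:|x|\le t\}$ when $t>0$ and $A=\{x:|x|\ge|t|\}$ when $t<0$, a disjoint union of $|A|/2$ pairs $\{z,\overline z\}$. Because $u$ is a signed permutation, its window $u(1),\dots,u(n)$ contains exactly one representative of each pair, so precisely $|A|/2$ of the values of $A$ occur among positions $1,\dots,n$. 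Arguing by contradiction, I assume none of them occurs in a middle position; then all $|A|/2$ window occurrences lie in the top positions $1,\dots,k$, while position $i$ is occupied by $a\notin A$.

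The decisive step is to locate the two values $t=C'(i)$ and $\overline t$ and contradict Conditions~\ref{nec conditions for B columns}. Using distinctness of the column entries together with the hypothesis $C[i+1,k]=C'[i+1,k]$ of Proposition~\ref{Path Prop}, the value $t$ cannot occupy positions $i,i+1,\dots,k$ of $u$. If the window representative of $\{t,\overline t\}$ is $t$ itself and lands in a top position $p\neq i$, then $C(p)=t=C'(i)$, which contradicts Condition~1 when $p<i$ and the injectivity of the entries of $C'$ (through $C(p)=C'(p)$) when $p>i$. The delicate sub-case — and the precise place where the type $B_n$ argument departs from Lenart's type $C_n$ computation in~\cite{Lenart 2012} — is when only $\overline t$, not $t$, sits in the window, i.e.\ when a sign change has been forced: here Condition~1 alone does not suffice, and I would invoke Condition~2, specifically part 2a, to rule out $\overline t$ sitting high among the top positions, since that configuration makes both $t$ and $\overline t$ appear early and thereby violates $\{1,\dots,|C'(l)|\}\subseteq\{|C(j)|\}_{1\le j\le i}$. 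Running the same count for $t<0$ (Case 2b) in parallel finishes the proof. I expect this sign-change sub-case to be the main obstacle, as it is exactly where the Stage II root $(i,\overline\imath)$ behaves differently in type $B_n$ and where Lemma~\ref{row monotonicity} must be combined with the block-off bookkeeping built into Conditions~\ref{nec conditions for B columns}.
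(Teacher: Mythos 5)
Your reduction is correct and matches the paper's own: the upper bound is immediate, and in Case 2 (with $u(i)\neq\overline{t}$, where $t:=C'(i)$) the lemma is equivalent to producing one middle position $l\in\{k+1,\ldots,n\}$ with $u(l)$ in the negation-closed arc $A=[\overline{t},t]$, so that, assuming the contrary, all $|A|/2$ window representatives of the pairs $\{x,\overline{x}\}\subseteq A$ are forced into positions $1,\ldots,i-1$. The gap is your ``decisive step'': you try to extract the contradiction from the single pair $\{t,\overline{t}\}$, and that cannot work. If its representative is $u(p)=\overline{t}$ with $p<i$, Condition 2 of Conditions~\ref{nec conditions for B columns} only says something when the forbidden pattern $C(p)\prec C'(i)\prec C'(p)$ actually occurs, and in $\prec_{\overline{t}}$ that pattern holds exactly when $C'(p)$ lies \emph{outside} $A$; nothing in the hypotheses forces this. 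The configuration $C(p)=\overline{t}$ with $C'(p)\in A\setminus\{t,\overline{t}\}$ violates no condition at the pair $(p,i)$: for instance $n=4$, $k=i=3$, $t=2$, $C=(1,\overline{2},3)$, $C'(2)=\overline{1}$, $C'(3)=2$ --- rows $2$ and $3$ are clean, and the clash with ``no middle entry in $A$'' surfaces only at row $1$, where $C'(1)$ is forced outside $A$, the pattern $1\prec 2\prec C'(1)$ appears, and its blocked-off exception fails since $\{1,2\}\not\subseteq\{|C(1)|\}$. Note also that your appeal to 2a is logically inverted: 2a belongs to the clause that \emph{permits} the pattern, so a contradiction requires first exhibiting the pattern and then refuting 2a--2c; a failure of 2a by itself, with no pattern present, contradicts nothing.

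What is missing is the paper's global pigeonhole, which is how its proof actually closes. Every position $i'<i$ carrying a representative $u(i')\in A$ gets matched to the value $C'(i')$; Conditions 1 and 2 force each such match to lie in $A$ (if $C'(i')\notin A$ the pattern $C(i')\prec C'(i)\prec C'(i')$ arises and the exceptional clause is refuted), while distinctness of the entries of $C'$ and the column property exclude the values $t$ and $\overline{t}$ and forbid using both members of any pair. Hence at most $|A|/2-1$ matches are available for the $|A|/2$ representatives that your counting step has already placed in positions $1,\ldots,i-1$ --- a contradiction. So the proof is completed by counting over \emph{all} pairs of $A$, not by analyzing the one pair $\{t,\overline{t}\}$; as written, your argument stalls exactly in the sub-case you yourself flagged as the main obstacle.
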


\vspace{12pt} \begin{proof} Without loss of generality, assume $C'(i)>0$ and that $u(i)\neq C'(i)$.  Let $a = C'(i)\in [n]$ and $A = \{\overline{a},\overline{a-1},\hdots,\overline{1},1,\hdots,a-1,a\}$.  We need to show that $u[k+1,n]$ contains no elements in $A$, so assume the contrary.  Suppose that  $u[i+1,k]=C'[i+1,k]$ contains an element of $A$.  Then by Conditions 1 and 2, $u(i)\in A$ and thus $\overline{C'(i)}\preceq u(i)\preceq M_I(u,i,C')\preceq C'(i)$. Now let $u[i+1,k]=C'[i+1,k]$ not contain any element of $A$.  We conclude that $u[1,i-1]$ contains an element from each pair $\{x,\overline{x}\}$ of elements in $A$.  If $u(i')\in A$ for $i'<i$, we say that $u(i')$ is matched with $C'(i')$.  Since $C'(i) = a$, the only possile matches for $u[1,i-1]\cap A$ are elements in $A\setminus \{a,\overline{a}\}$, by the first part of the reorder condition.  But these are too few to match $a$ elements, which is a contradiction.
\end{proof}

\begin{remark}\label{M_i M_iii order}
{\rm Since $u(i)\preceq M_{I}\preceq M_{III}\preceq C'(i)$, we then have that $\overline{C'(i)}\preceq M_{III}(u,i,C')\preceq C'(i)$.}
 \end{remark}

\vspace{12pt} We now describe the algorithm that constructs the path in Proposition~\ref{Path Prop}.  The algorithm inputs the signed permutation $u$, the target column $C'$, and the position $i$; it outputs the list of reflections $T$ determining the path and the permutation $v$.  It calls on the algorithm \textit{is-blocked-off} which inputs the first $i$ entries of the permutation $u$ and column $C'$ as columns and the value $i$; it then returns whether or not the given columns are blocked off at $i$ by $C'(i)$.  Note that the following procedure is one iteration of Algorithm~\ref{Mod-Greedy algorithm} where we explicitly go through the four stages of $\Gamma^{ki}$. 

\vspace{12pt}
procedure path-B(u,i,$C'$);

\tab Let $c:=C'(i)$;

\tab if $u(i)=c$ then return $\emptyset,u$;

\tab else

\tab\tab Let $S:=\emptyset$, $L:=(k+1,\hdots,n)$ $v:u$;

\tab\tab if $i=k$ and is-blocked-off$(v[1,i],C'[1,i],i)$ then let $S:=S,(k,k+1)$, $v:=v(k,k+1)$, and $L:=L-(k+1)$;

\tab\tab end if;

\tab\tab for m in $L$;

\tab\tab\tab if $v(i)\prec v(m)\preceq c$ and not is-blocked-off$(v(i,m)[1,i],C'[1,i],i)$ then let $S:=S,(i,m)$, $v:=v(i,m)$;

\tab\tab\tab end if;

\tab\tab end for;

\tab\tab let $u_i:=v$; $T_i:=S$

\tab\tab if $sign(u_i)>0$ and $u_i\prec \overline{u_i}\preceq c$ then let $T_{ii}:=T_i,(i,\overline{\imath})$, $u_{ii}:=u_i(i,\overline{\imath})$;

\tab\tab else let $T_{ii}:=T_i$, $u_{ii}:=u_i$;

\tab\tab end if;

\tab\tab let $L:=(\overline{n},\hdots,\overline{k+1})$, $S:=T_{ii}$, $v:=u_{ii}$;

\tab\tab for $m$ in $L$;

\tab\tab\tab if $v(i)\prec v(m)\preceq c$ then let $S:=S,(i,m)$, $v:=v(i,m)$;

\tab\tab\tab end if;

\tab\tab end for;

\tab\tab Let $T_{iii}:=S$, $u_{iii}:=v$, $L:=(\overline{i-1},\hdots,\overline{1})$;

\tab\tab for $m$ in $L$;

\tab\tab\tab if $v(i)\prec v(m)\preceq c$ then let $S:=S,(i,m)$, $v:=v(i,m)$;

\tab\tab\tab end if;

\tab\tab end for;

\tab\tab return($S$,$v$);

\tab end if;

end.

\vspace{.5in}

is-blocked-off(u[1,i],$C'$[1,i],i);

\tab Let $a:= u(i)$, $b:=C'(i)$;

\tab if $a\neq b$ and $|a|\leq b$ and $b>0$

\tab\tab and $\{1,\hdots, b\}\subseteq |u[1,i]|,|
C'[1,i]|$

\tab\tab and $\#\{l: u(l)<0, C'(l)>0\}\% 2 = 1$

\tab\tab then return TRUE;

\tab else return FALSE;

\tab end if;

end. 

\begin{remark}\label{correct termination and uniqueness}
{\rm Condition $1$ gives that $C(l)\neq C'(i)$ for any $l<i$.  This implies that $C'(i)$ appears in some position $q>k$ in $u$. Further, since two columns cannot be blocked off at $i$ with $C(i)=C'(i)$, there is no fear of the algorithm skipping the target value.  This ensures that the algorithm terminates correctly.  The algorithm also respects Lemmas~\ref{row monotonicity},~\ref{no path for block off},~\ref{skip implies block off}, and~\ref{twice implies block off} which together give the uniqueness of the path.}
 \end{remark}

There are two parts of quantum Bruhat criterion in type $B_n$: the sign criterion and the transposing over values criterion.  The following will show that the constructed path follows both.

\begin{lemma}\label{u_i and M_i} If the skip $Path\_C$ step is not called in the algorithm, then $u_i(i)=M_I$.  Otherwise, $u_{iii}(i) = \overline{M_I}$.\end{lemma}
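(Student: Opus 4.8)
The plan is to track the entry in position $i$ through the four stages of the procedure path-B, separating the two cases of the statement, and to use the monotonicity of Lemma~\ref{row monotonicity} throughout so that position $i$ only ever climbs in $\prec_{u(i)}$ and never passes $c:=C'(i)$.

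First I would dispose of the no-skip case, which is the clean $Path\_C$ situation. Here Stage~I is exactly the greedy over the positive positions $k+1,\dots,n$. The key observation is that a Stage~I position $m$ retains its original entry $u(m)$ until it is swapped with position $i$, since every Stage~I transposition involves position $i$ and the positions are processed in increasing order; thus when $m$ is processed the test compares the current $v(i)$ with $u(m)$. By Lemma~\ref{row monotonicity} the entry in position $i$ increases monotonically and never passes $c$, so a direct induction shows that after Stage~I position $i$ holds $\max_{\prec_{u(i)}}\big(\{u(i)\}\cup\{u(m):k+1\le m\le n,\ u(i)\prec u(m)\preceq c\}\big)$, which is $M_I$ by definition. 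Hence $u_i(i)=M_I$.

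For the skip case I would first locate the skipped value. By Remark~\ref{skip only in stage I} the skip occurs in Stage~I, say at position $m_d\in\{k+1,\dots,n\}$, and by Lemma~\ref{skip implies block off} the word $wt_{i,m_d}$ that the skipped swap would produce is blocked off at $i$ by $c$; in particular $c>0$, and since this is the situation of Case~2, Lemma~\ref{lenart lemma} gives $\overline{c}\preceq M_I\preceq c$, so $|M_I|\le c$. The crucial claim is that $M_I=u(m_d)$. Indeed, Definition~\ref{block-off def}(2) forces $\{1,\dots,c\}\subseteq|w[1,i-1]|\cup\{|u(m_d)|\}$; because a signed permutation has pairwise distinct absolute values, the entry $w(i)$ (whose absolute value occurs only in position $i$) must satisfy $|w(i)|>c$, and every absolute value in $\{1,\dots,c\}\setminus\{|u(m_d)|\}$ already sits in the first $i-1$ positions. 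Consequently $u(m_d)$ is the unique entry of absolute value $\le c$ among positions $k+1,\dots,n$, and since $M_I$ is such an entry we get $M_I=u(m_d)$. Now write $\overline{M_I}=\overline{u(m_d)}=u(\overline{m_d})$; as $m_d$ was skipped, the Stage~III position $\overline{m_d}$ is untouched by Stages~I and~II and still carries $\overline{M_I}$ on entering Stage~III, while $\overline c\preceq M_I\preceq c$ yields $\overline c\preceq\overline{M_I}\preceq c$, so $\overline{M_I}$ is a legal Stage~III target. I would then argue that $\overline{M_I}$ is the largest value $\preceq c$ reachable in Stages~I--III after the skip: every value strictly between $\overline{M_I}$ and $c$ lies on the arc where all entries have absolute value $\le c$, and by the counting above the only such entry at a position reachable in Stages~I--III is $M_I$ at $m_d$, which is exactly the blocked transposition. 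Climbing monotonically (Lemma~\ref{row monotonicity}), the greedy therefore applies the Stage~III root $(i,\overline{m_d})$ and stops, giving $u_{iii}(i)=\overline{M_I}$.

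The main obstacle is the skip case, and within it the bookkeeping that both pins the skipped value to $M_I$ and then certifies $\overline{M_I}$ as the Stage~III maximum. This rests on combining the block-off containment of Definition~\ref{block-off def} with the order-reversing behaviour of the bar involution on the arc $[\overline c,c]$ and with the fact (from the same counting) that no reachable position other than the blocked $m_d$ carries a value in $(\overline{M_I},c]$. Care is also needed to confirm that the analysis genuinely takes place in Case~2---the block-off mechanism requires crossing $\overline{C'(i)}$---and to treat cleanly the boundary situation $M_I=\overline c$, where $\overline{M_I}=c$ and position $i$ reaches the target already at the end of Stage~III.
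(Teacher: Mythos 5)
Your proposal is correct, and its skeleton is the same as the paper's: the no-skip case via greedy monotonicity, and the skip case pinned down by the block-off covering condition. The differences are in organization. For the no-skip case the paper argues contrapositively (if $u_i(i)\neq M_I$ then $M_I$ survives Stage I with $u_i(i)\prec M_I\preceq C'(i)$, so it must have been skipped), while you run the induction directly; this is the same content. For the skip case the paper re-enters the proof of Lemma~\ref{skip implies block off} to extract the sandwich $\overline{M_I}\preceq u_{iii}(i)\prec C'(i)$ from the crossing-move case analysis there, and then invokes the covering to force equality; you instead first prove that the skipped entry equals $M_I$ (via Definition~\ref{block-off def}(2), distinctness of absolute values, and Lemma~\ref{lenart lemma}), and then track $\overline{M_I}$ sitting untouched at position $\overline{m_d}$ until Stage III, using the covering again to show the greedy lands exactly there. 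This buys a more self-contained argument that does not lean on another proof's internals. The two points you flag as needing care both close with material you already have: (i) Case 2 does hold, because your covering argument gives $|u(i)|>C'(i)$, and any entry of absolute value greater than $C'(i)$ meets $\overline{C'(i)}$ before $C'(i)$ on the circle --- in fact you could bypass Lemma~\ref{lenart lemma} altogether, since $u(m_d)$ lies in the arc $[\overline{C'(i)},C'(i)]$ while every other candidate has absolute value exceeding $C'(i)$ and hence precedes $\overline{C'(i)}$, so $u(m_d)$ dominates and is $M_I$ by definition; (ii) the halt after applying $(i,\overline{m_d})$ is immediate because Stage III only accesses barred positions (so the blocked position $m_d$ is irrelevant there), and by the covering no reachable position carries a value in the arc $(\overline{M_I},C'(i)]$. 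With those two sentences added, your argument is complete and, if anything, more explicit than the paper's own proof.
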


\begin{proof} Suppose that $u_i\neq M_I$, then we have that $M_I\in u_i[k+1,n]$ and $u_i(i)\prec M_I\preceq C'(i)$.  Note that $M_I\neq C'(i)$, as we know that the algorithm terminates correctly.  The remaining strict inequality means that we skipped $M_I$ in the $Path\_C$ algorithm.  By Lemma~\ref{skip implies block off}, this means that the act of transposing the value $M_I$ into position $i$ would have caused the current word to have been blocked off with $C'$ at $i$ by $C'(i)$. Recall from the construction in Lemma~\ref{skip implies block off}, we must have that if $M_I<0$, then $1\preceq \overline{M_I}\preceq u_{iii}(i)\prec t\preceq n$ and if $M_I>0$, then $\overline{n}\preceq \overline{M_I}\preceq u_{iii}(i)\prec M_I \prec t\prec n$.  Further, by the block off condition imposed on the entries in positions $k+1$ through $\overline{k+1}$, we have in both cases that the inequality is really an equality and so $u_{iii} = \overline{M_I}$.
 \end{proof}

\begin{lemma}\label{sign criterion lemma}
Let $u,i,C'$ be inputs for the algorithm and $u_{ii}$ be as given in the algorithm.  Then $\overline{n}\preceq u_{ii}(i)\preceq C'(i)$.
\end{lemma}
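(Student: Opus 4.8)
The plan is to start from the monotonicity already available and then convert it from the circular order based at $C(i)=u(i)$ to the one based at $\overline{n}$. Since $\overline{n}$ is the minimal element of $\prec_{\overline{n}}$, the inequality $\overline{n}\preceq u_{ii}(i)$ is automatic, so the entire content is the upper bound $u_{ii}(i)\preceq C'(i)$ in the order $\prec_{\overline{n}}$; writing $c:=C'(i)$, this is the assertion that $u_{ii}(i)$ avoids the open arc $(c,\overline{n})_{\prec_{\overline{n}}}$, which is $\{c+1,\ldots,n\}$ when $c>0$ and $\{\overline{|c|-1},\ldots,\overline{1},1,\ldots,n\}$ when $c<0$. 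First I would record, from Lemma~\ref{row monotonicity} applied throughout $T_i$ together with the stage~II application test $u_i(i)\prec\overline{u_i(i)}\preceq c$, that in every case $u_{ii}(i)\preceq_{C(i)}c$. The crux is then purely order-theoretic: upgrading $\preceq_{C(i)}c$ to $\preceq_{\overline{n}}c$, and this is where the two cases (1) $C(i)\preceq C'(i)\prec\overline{C'(i)}$ and (2) $C(i)\preceq\overline{C'(i)}\prec C'(i)$ introduced before the algorithm enter.

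For the no-skip case in Case~2, Lemma~\ref{lenart lemma} and Remark~\ref{M_i M_iii order} confine $M_I=u_i(i)$ to the arc $[\overline{c},c]_{\prec_{\overline{c}}}$. The key observation is that this arc is stable under the bar involution, and that for $c>0$ it is contained in $[\overline{n},c]_{\prec_{\overline{n}}}$; hence both $M_I$ and $\overline{M_I}$ already satisfy $\preceq_{\overline{n}}c$, so whether or not stage~II fires the bound holds with no further work. For $c<0$ the arc $[\overline{c},c]_{\prec_{\overline{c}}}$ still contains the positive entries $\{|c|,\ldots,n\}$, so $M_I$ may be positive; here I would check that the stage~II test fires precisely when $M_I>0$ (because then $\overline{M_I}$ is a barred value of magnitude $\geq|c|$ lying in the arc $(M_I,c]_{\prec_{C(i)}}$), replacing $u_{ii}(i)$ by $\overline{M_I}$ with $|\overline{M_I}|\geq|c|$, which is exactly $\preceq_{\overline{n}}c$; if $M_I$ is already barred then $|M_I|\geq|c|$ and stage~II does not fire.

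In Case~1 I would argue directly on the arc $[C(i),c]_{\prec_{C(i)}}$ that contains $u_{ii}(i)$. The case hypothesis $c\prec\overline{c}$ forces $C(i)\preceq_{\overline{n}}c$ whenever $c>0$ (otherwise $\overline{c}$ would precede $c$ in $\prec_{C(i)}$), so the whole arc lands inside $[\overline{n},c]_{\prec_{\overline{n}}}$ and the bound is immediate (stage~II, if it fires, only replaces a value $\leq c$ by a barred one, still $\preceq_{\overline{n}}c$). When $c<0$ the only surviving possibility is $C(i)$ positive, in which case the arc meets $\{|c|+1,\ldots,n\}$; I would again invoke the stage~II negation exactly as in Case~2 to land in the barred range of magnitude $\geq|c|$.

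The main obstacle is the skip case and, with it, the order conversion in its most delicate form. Here Lemma~\ref{u_i and M_i} is the right tool: a skip forces $c>0$ (a block-off requires $C'(i)>0$) and gives $u_{iii}(i)=\overline{M_I}$. Since stage~III is sign-preserving and monotone, $u_{ii}(i)$ shares the sign of $\overline{M_I}$: if $M_I>0$ then $u_{ii}(i)$ is barred and automatically $\preceq_{\overline{n}}c$, while if $M_I<0$ the bounds $1\preceq\overline{M_I}\prec c$ extracted in the proof of Lemma~\ref{u_i and M_i} must be propagated back from $u_{iii}$ to $u_{ii}$ to conclude $u_{ii}(i)<c$. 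Verifying that this backward propagation never lets the positive value $u_{ii}(i)$ wrap past $c$ in $\prec_{C(i)}$ is the one genuinely fiddly point, and I expect it to require re-examining which positions are actually used by the stage~III reflections rather than a purely formal order argument.
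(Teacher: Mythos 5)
Your reduction of the statement to the single inequality $\overline{n}\preceq u_{ii}(i)\preceq C'(i)$ (i.e.\ a bound in the circular order based at $\overline{n}$) and your handling of the no-skip situation are essentially sound, and in fact more detailed than the paper's own treatment, which simply combines Lemma~\ref{u_i and M_i} with the bound $u_i(i)=M_I\in[\overline{C'(i)},C'(i)]$ of Lemma~\ref{lenart lemma} and then lets stage~II correct the case $C'(i)<0$. The one slip there is your claim that in Case~1 with $C'(i)<0$ the entry $C(i)$ must be positive: it can be barred with $|C(i)|\geq |C'(i)|$ (e.g.\ $C(i)=\overline{5}$, $C'(i)=\overline{3}$, where indeed $\overline{3}\prec 3$ in $\prec_{\overline{5}}$); this sub-case is harmless, since the arc then consists of barred letters of magnitude at least $|C'(i)|$ and stage~II cannot fire, but it is missing from your case analysis.

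The genuine gap is the skip case, which you yourself flag as unfinished. There are two problems. First, your backward propagation from $u_{iii}$ to $u_{ii}$ rests on ``stage~III is sign-preserving,'' but that fact is not available here: in the paper the sign behaviour of stage~III and~IV steps is deduced \emph{from} the present lemma (see Remark~\ref{sign criterion remark}), so invoking it inside this proof is circular. Second, the propagation goes the wrong way and is unnecessary. The paper argues forwards: a skip forces $C'(i)>0$ and $C'(i)\prec u(i)\prec\overline{C'(i)}$, and the block-off structure behind Lemma~\ref{skip implies block off} shows that $M_I$ is the only entry of magnitude at most $C'(i)$ among positions $k+1,\ldots,n$; since the algorithm skips exactly this value, every stage~I step keeps the magnitude of the entry in position $i$ strictly above $C'(i)$, so $C'(i)\prec u_i(i)\prec\overline{C'(i)}$. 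Stage~II then fires precisely when $u_i(i)>0$ (for $m:=u_i(i)>C'(i)>0$ one has $m\prec\overline{m}\prec C'(i)$ in $\prec_m$), so in either case $u_{ii}(i)$ is a barred letter of magnitude greater than $C'(i)$, giving the stronger conclusion $\overline{n}\preceq u_{ii}(i)\prec\overline{C'(i)}\prec C'(i)$. In particular $u_{ii}(i)$ is never positive in the skip case, so the ``fiddly point'' you anticipate --- a positive $u_{ii}(i)$ wrapping past $C'(i)$ --- does not actually arise; but the forward argument that rules it out is precisely the piece your proposal leaves unproved.
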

 
\begin{proof}
If the skip $Path\_C$ step was not called in the algorithm, then by Lemma~\ref{u_i and M_i} and Lemma~\ref{Lenart reorder lemma} we know that $u_i(i) = M_I \in [\overline{C'(i)},C'(i)]$. If $C'(i)>0$, the result follows.  Otherwise, we may have that $1\preceq C'(i)\prec u_i(i)\prec \overline{n}$, in which case stage II of the algorithm would apply $(i,\overline{i})$ to $u_i$, giving $\overline{n}\preceq u_{ii}(i)\preceq C'(i)$.

If the skip $Path\_C$ step is called at some point in stage I of the algorithm, then we know that $C'(i)>0$ and $C'(i)\prec u(i)\prec \overline{C'(i)}$. Further, the only value from $[\overline{C'(i)}]$ in $u[k+1,n]$ is $M_I$. However, the algorithm skips $M_I$ and so we continue to have $C'(i)\prec u_i(i)\prec \overline{C'(i)}$. After stage II of the algorithm, we can refine this inequality to be $\overline{n}\preceq u_{ii}(i)\prec \overline{C'(i)}\prec C'(i)$.
\end{proof}

\begin{remark}\label{sign criterion remark}
{\rm There is no sign criterion for stage I transpositions and the criterion for stage II is built in to the algorithm. The fact that the procedure uses the $Path\_C$ algorithm in stages III and IV along with Lemma~\ref{sign criterion lemma}  give that stage III and IV upsteps maintain the same sign and downsteps have a sign change of negative to positive. }
\end{remark}

\begin{remarks}\label{pass over criterion remark}
{\rm We now consider the criterion respecting the transposing of the value in position $i$ across the positions $i+1,\hdots,n,\overline{n},\hdots,\overline{k+1}$ and $\overline{i},\hdots,\overline{2}$.
\begin{enumerate}

\item For the positions $i+1,\hdots,k$, recall from the proof of Proposition~\ref{Reorder Nec} that even if there is some $i<l\leq k$ such that $C(i)<C'(l)<C'(i)$, we have that $C'(l)\prec u(i) = CT_kT_{k-1}\hdots T_{i+1}(i)\preceq C'(i)$. Therefore every value in $x\in u[i+1,k]$ is such that $x\prec u(i)\prec C'(i)$. The criterion is then met by Lemma~\ref{row monotonicity} giving the monotonicity of path in position $i$ during the reflections in $T_i$.

\item If we do not skip $Path\_C$ transposition in stage I, then the $Path\_C$ algorithm will be used throughout, and there is no fear of tansposing a value in position $i$ over the positions $k+1,\hdots,n,\overline{n},\hdots,\overline{k+1}$ and $\overline{i},\hdots,\overline{2}$. 

\item If we do skip $Path\_C$ transposition in stage I, then we know from Lemma~\ref{u_i and M_i} that the skipped value was $M_I$. Further, the procedure will only skip a $Path\_C$ step if the corresponding transposition would result in the current word being blocked off with $C'$ at $i$ by $C'(i)$. This means that there are no  values from $[\overline{C'(i)}]$, other than $M_I$ and $\overline{M_I}$, in positions $[k+1,\overline{k+1}]$ in $u,u_i,u_{ii}$ and $u_{iii}$. Thus the only value in positions $[k+1,\overline{k+1}]$ which would lead to possibly transposing over $M_I$ would be $\overline{M_I}$, but this is permitted by the quantum bruhat criterion.  Recall that $u_{iii} = \overline{M_I}$. If $\overline{M_I}=M_{III}$, then the Pat\_C algorithm guarantees that the value in position $i$ will not transpose across positions $[\overline{i},\overline{2}]$ during stage IV transpositions.  If $\overline{M_I}=\overline{M_{III}}$, then we note that transposing $\overline{M_I}$ across $M_I$ is permissible by quantum bruhat criterion, and thereafter the $Path\_C$ algorithm procedure will guarantee that the value in position $i$ will not transpose across positions $[\overline{i},\overline{2}]$ during the remainder of the stage IV transpositions.

 \end{enumerate}
 }
\end{remarks}

\vspace{12pt} The following lemma will further the results from Lemma~\ref{row monotonicity} to all of $T$ rather than just in $T_i$. The proof follows similarly to work in~\cite{Lenart 2012} with some additions concerning type $B_n$.

\begin{lemma}\label{StageIV Monotonicity}
  If such a path hypothesised in Proposition~\ref{Path Prop} exists and $u(l)\neq v(l)$, then $C(l) = u(l)\prec v(l)\preceq C'(l)$ for $l = 1,\hdots,i-1$.
\end{lemma}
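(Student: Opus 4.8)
The plan is to reduce the statement to the effect of the stage~IV roots alone and then mirror the row-monotonicity argument of~\cite{Lenart 2012}, inserting the sign bookkeeping peculiar to type $B_n$.

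First I would determine which roots of $\Gamma_{ki}$ can alter an entry in a position $l$ with $1\le l\le i-1$. Writing out the reflections stage by stage (recall that $(i,m)=t_{im}t_{\overline{m}\,\overline{\imath}}$ in stage~I, $(i,\overline{\imath})=t_{i\overline{\imath}}$ in stage~II, $(i,\overline{m})=t_{i\overline{m}}t_{m\overline{\imath}}$ in stage~III, and $(i,\overline{j})=t_{i\overline{j}}t_{j\overline{\imath}}$ in stage~IV), one checks that stages~I--III only move the positions $i,\overline{\imath},m,\overline{m}$ with $m>k$, so each of them fixes every position $l<i$. The only root touching position $l$ is thus the single stage~IV root $(i,\overline{l})$, and it occurs at most once in $\Gamma_{ki}$. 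Consequently, if $u(l)\neq v(l)$ this root must be applied exactly once: letting $w$ be the word immediately before it, position $l$ is untouched up to that point, so $u(l)=C(l)=w(l)$, and applying $(i,\overline{l})=(l,\overline{\imath})$ gives $w'(l)=w(\overline{\imath})=\overline{w(i)}$, after which position $l$ is again untouched. This isolates the single quantity I must control, namely $v(l)=\overline{w(i)}$.

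The lower bound $C(l)\prec v(l)$ is then immediate from the algorithm of Section~\ref{constructing segment of path in B}: the root $(i,\overline{l})$ is applied only when the guard $w(i)\prec \overline{w(l)}=\overline{C(l)}$ holds strictly, whence $v(l)=\overline{w(i)}\neq C(l)$, and in the circular order $\prec_{C(l)}$ every element other than $C(l)$ exceeds it.

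The substance is the upper bound $v(l)=\overline{w(i)}\preceq_{C(l)} C'(l)$, which I would establish by contradiction, following the type $A$ monotonicity lemma of~\cite{Lenart 2012} adapted to the signed setting. Assume $C(l)\prec C'(l)\prec \overline{w(i)}$ in $\prec_{C(l)}$. Since $(i,\overline{l})$, written in standard form $(l,\overline{\imath})$ with $l<i$, is a quantum Bruhat edge out of $w$, Proposition~\ref{type B bruhat conditions}(2) applies: in case~(a) no position $k'$ with $l<k'<\overline{\imath}$ carries a value strictly between $w(l)$ and $\overline{w(i)}$ in the ordinary order, and in case~(b) the same holds with $k'\neq i$ and in the circular order $\prec_{C(l)}$. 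I would then locate the value $C'(l)$ among the entries of $w$ --- using Condition~\ref{nec conditions for B columns}(1) (which forbids $C'(l)=C(i')$ for $i'<l$), Condition~\ref{nec conditions for B columns}(2), and the equalities $v[i+1,k]=C'[i+1,k]$ and $v(i)=C'(i)$ already secured for this segment --- and show that it must occupy an admissible intermediate position $k'$, so that the jump $C(l)\mapsto\overline{w(i)}$ passes over it, contradicting the relevant clause of Proposition~\ref{type B bruhat conditions}(2). The sign classification of Remark~\ref{sign criterion remark} (stage~IV upsteps preserve sign, downsteps change $-$ to $+$), together with the row monotonicity of Lemma~\ref{row monotonicity}, is precisely what lets me decide which of cases~(a)/(b) is in force and keep the circular-order comparisons coherent.

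The main obstacle is this last step: pinning down where $C'(l)$ sits in the intermediate word $w$ and verifying that the extra sign-changing stage~IV moves --- which have no analogue in type $A$ --- cannot relocate $C'(l)$ out of the blocking range. This is the only place the argument genuinely departs from~\cite{Lenart 2012}, and it is exactly where the new clause~2(b) of the type $B_n$ quantum Bruhat criterion must be handled with care.
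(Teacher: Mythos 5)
Your reduction to the single stage~IV reflection $(i,\overline{l})$ (stages I--III fix all positions $l<i$, so $u(l)=w(l)$ and $v(l)=\overline{w(i)}$) and the trivial lower bound are correct and agree with the paper's setup; the genuine gap is in the upper bound $v(l)=\overline{w(i)}\preceq C'(l)$, which you leave as an acknowledged ``obstacle,'' and the route you propose for it would not close. Locating the single value $C'(l)$ in $w$ and showing it blocks the edge $(l,\overline{\imath})$ cannot work locally: Condition~\ref{nec conditions for B columns}(1) only forbids the value $C'(l)$ from occurring among $C(1),\hdots,C(l-1)$; it does not forbid the barred value $\overline{C'(l)}$. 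If $\overline{C'(l)}=C(j)$ for some $j<l$ --- exactly the configuration that the block-off exception in Condition~\ref{nec conditions for B columns}(2) tolerates --- then the value $C'(l)$ sits in position $\overline{\jmath}$, and since $\overline{\jmath}>\overline{l}>\overline{\imath}$ this position lies outside the blocking range of both clauses (a) and (b) of Proposition~\ref{type B bruhat conditions}(2). The edge out of $w$ is then perfectly legal even though it passes over $C'(l)$ in the circular order, so no contradiction arises from this one reflection, no matter how carefully clause 2(b) is parsed.

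What the paper does instead is global rather than local. It takes the \emph{largest} failing index $l_1$, considers the whole remaining tail of stage~IV reflections $(i,\overline{l_1}),(i,\overline{l_2}),\hdots,(i,\overline{l_p})$, and sets $a:=w(i)$, $b:=\overline{C'(l_1)}$, $c_r:=\overline{w(l_r)}$, so that $a\prec b\prec c_1\prec\hdots\prec c_p=C'(i)$ by Lemma~\ref{row monotonicity}. The quantum Bruhat criterion then forces every value $x\in\{b,\hdots,c_p\}$ to appear, up to sign, among the first $i$ entries of $w$, in distinct positions $j$; and Condition~\ref{nec conditions for B columns}(2), combined with the impossibility of the block-off exception at this point in the path (Lemma~\ref{u_i and M_i}), pins the corresponding targets $C'(j)$ inside $\{\pm b,\hdots,\pm(c_p-1)\}$. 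That is one more position than there are available absolute values --- a pigeonhole contradiction. This counting over the entire tail of stage~IV roots, rather than a single blocking value, is the idea missing from your proposal; it is also why the paper needs the maximality of $l_1$, which your local argument never invokes.
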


\begin{proof}
Suppose that this fails for some $l$ and let $l_1<i$ be the largest such $l$. Let $w$ be the signed permutation to which the reflection $(i,\overline{l_1})$ is applied in stage IV.  Then we have that $$a:=w(i)\prec b:=\overline{C'(l_1)}\prec c_1:=\overline{w(l_1)}.$$ Now let $\widetilde{C}:=w[1,k]$ and $(i,\overline{l_1}),(i,\overline{l_2}),\hdots,(i,\overline{l_p})$ be the remainder of the stage IV roots in $T_i$ where $l_1>l_2>\hdots >l_p$.  If $c_i:=\overline{w(l_i)}$, we then have that $a\prec b\prec c_1\prec c_2\prec \hdots\prec c_p$ by Lemma~\ref{row monotonicity}.

\vspace{12pt} By the quantum Bruhat criterion, there can be no values between $a$ and $c_p$ in $w[i,\overline{l_1}]$.  Therefore, for any $x\in\{b,b+1,\hdots,c_p\},$ either $x$ or $\overline{x}$ is in $\widetilde{C}[1,i]$, say in position $j$.  We claim that the possible values for $C'(j)$ are $\{\pm b, \pm (b+1), \hdots , \pm (c_p-1)\}$.  Note then that overall we have too few choices for these positions, which gives a contradiction. We prove the claim in the following two cases.

\vspace{12pt}\hspace{12pt} Case 1: $\widetilde{C}(j)\in\{b,b+1,\hdots,c_p\}$.  since $w(l_p)=\overline{c_p}$, we know that $x$ is not $c_p$. Also, since $C'(i)=c_p$, we have that $C'(j)\in\{x,x+1,\hdots , c_p-1\}$ by Reorder condition $2$ unless $\widetilde{C}C't_{ij}$ is blocked off at $j$ by $c_p$.  However, since $w$ is in stage IV of $T_i$, we know from Lemma~\ref{u_i and M_i} that $\overline{c_p}\prec a\prec c_p$, but $a = \widetilde{C}(i)$, so the blocked off condition cannot hold.

\vspace{12pt}\hspace{12pt} Case 2:  $\widetilde{C}(j)\in\{\overline{c_p},\hdots,\overline{b}\}$. Then $j\leq l_1$, otherwise one of the roots $(i,\overline{l_r})$ in $T$ will transpose values over $x$ in position $\overline{j}$, breaking the quantum Bruhat criterion.  Since $C'(l_1)=\overline{b}$, we can assume $j<l_1$.  Reorder criterion 2 then gives that $C'(j)\in\{\overline{x},\hdots,\overline{b+1}\}$ unless $\widetilde{C}C't_{jl_1}$ is blocked off at $j$ by $\overline{b}$.  But this cannot be, as $|\widetilde{C}(j)|>\overline{b}$ by assumption.

Thus the claim is proven and the lemma holds.
\end{proof}

\begin{proof}\textit{(Proof of Proposition~\ref{Path Prop})}

Assume that $u(i)\neq C'(i)$.  Then, given Remarks~\ref{correct termination and uniqueness},~\ref{sign criterion remark}, and~\ref{pass over criterion remark}, the only fact left to prove is that the reflections in stage IV satisfy the condition in the corresponding quantum Bruhat graph criterion which refers to the transposition of the value in position $i$ across the positions $\overline{k},\hdots,\overline{i+1}$.  Suppose that at some point in stage IV we apply to the current permutation $w$ a reflection $(i,\overline{l})$, with $l<i$, such that for some $j\in\{i+1,\hdots,k\}$ we have $w(i)\prec w(\overline{\jmath}) = \overline{C'(j)}\prec w(\overline{l})$ or equivalently,
 $$w(l)\prec w(j) = C'(j)\prec w(\overline{\imath}).$$
  Then by Lemma~\ref{StageIV Monotonicity}, we have that
   $$C(l)\preceq w(l)\prec w(\overline{\imath})\preceq C'(l).$$ However Proposition~\ref{Reorder Nec} showed that we have $C(l)\prec C'(j)\prec C'(l)$ only if $CC't_{lj}$ is blocked off at $j$ by $C'(j)$.  But from Lemma~\ref{Lenart reorder lemma}, we know that $C_j(j)\preceq C_j(l)\preceq C'(l)$ where we note that $C_j(j) = w(j) = C'(j)$. Lemma~\ref{StageIV Monotonicity} also gives that $C_j(l)\preceq w(l)\preceq C'(l)$. However we now have that $C_j(j) = w(j)\preceq C_j(l)\preceq w(l)\preceq C'(l)$ and $w(l)\prec w(j)\prec C'(l)$, a contradiction.  Thus no such $j$ exists. This completes the proof that the algorithm constructs a path in the quantum Bruhat graph.
\end{proof}
 
Now that we have shown that there is a unique path to determine each $T_i$ and that our algorithm follows that path, we can now show that the third condition given on two columns $CC'$ from Conditions~\ref{nec conditions for B columns} is indeed necessary to build the entire path given in Proposition~\ref{Total Path Prop}.

\begin{lemma}\label{nec condition 3}
The two columns $CC'$ are not blocked off by $C'[i]$ for any $1\leq i <k$.
\end{lemma}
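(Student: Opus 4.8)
The plan is to argue by contradiction, reducing this whole-path statement to the single-segment obstruction already isolated in Lemma~\ref{no path for block off}. Suppose a path as in Proposition~\ref{Total Path Prop} exists, yet $CC'$ are blocked off at some $i_0$ with $1\le i_0<k$, by $b:=C'(i_0)$. Since the edge labels form a subsequence of $\Gamma_l(k)=\Gamma^{kk}\cdots\Gamma^{k1}$, they are grouped and applied in decreasing order of the first index (as in Lemma~\ref{Lenart reorder lemma}), so the path factors as $u\xrightarrow{T_k}\cdots\xrightarrow{T_{i_0+1}}w\xrightarrow{T_{i_0}}\cdots\xrightarrow{T_1}v$. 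I single out the word $w$ reached just before $T_{i_0}$ and aim to apply Lemma~\ref{no path for block off} to $w$ at position $i_0$.

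First I record the routine facts. Inspecting the four stages of $\Gamma^{kj}$ shows that a segment $T_j$ with $j>i_0$ moves only position $j$, positions exceeding $k$, and positions strictly below $j$ (via the stage IV roots $(j,\overline{m})$); hence no already-fixed position $j'\in\{i_0+1,\ldots,k\}$ is disturbed, giving $w[i_0+1,k]=C'[i_0+1,k]$, the hypothesis of Lemma~\ref{no path for block off}. Moreover $i_0<k$, so the root $(k,k+1)$ does not occur among $T_{i_0},\ldots,T_1$, and by Lemmas~\ref{row monotonicity},~\ref{skip implies block off} and~\ref{twice implies block off} the target is never passed from $w$ onward. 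Finally, the monotonicity $C(l)\preceq w(l)\preceq C'(l)$ for $l\le i_0$ (Lemmas~\ref{row monotonicity} and~\ref{StageIV Monotonicity}) forces $|w(i_0)|\le b$ in the circular order, with equality only when $w(i_0)=\overline{b}$; the value $w(i_0)=b$ cannot arise, since position $i_0$ is not yet processed and no stage IV downstep of a higher segment can carry it onto its positive target across $\overline b$ without breaking the quantum Bruhat criterion. This gives part $(1)$ of Definition~\ref{block-off def} for the pair $wC'$.

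The crux is to inherit parts $(2)$ and $(3)$. Since the block-off condition at $i_0$ depends only on the first $i_0$ entries, I must control how the higher segments reshuffle $w[1,i_0]$, and here I follow the same bookkeeping as in the proof of Lemma~\ref{no path for block off}. For the content condition, the fixed entries $w[i_0+1,k]=C'[i_0+1,k]$ all have absolute value $>b$ (the $b$ values $\{1,\ldots,b\}$ already fill $i_0$ slots of $C'$); combined with monotonicity this makes every position in $\{i_0+1,\ldots,k\}$ abs-pure, its value remaining in the block of values of absolute value $>b$ throughout its segment, being trapped between two endpoints of absolute value $>b$. Consequently none of $\{1,\ldots,b\}$ can escape the first $i_0$ positions, so $\{1,\ldots,b\}\subseteq\{|w(l)|\}_{l\le i_0}$. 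For the parity condition I track the number of $-+$ pairs among positions $1,\ldots,i_0$: only stage IV reflections of the higher segments touch these positions, their upsteps preserve every sign there, and their downsteps change signs only together with the motion forced by abs-purity and content-preservation. The goal is then to show that these downstep contributions cancel in pairs, so that the count inherited by $w$ remains odd.

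With $w$ blocked off at $i_0$ by $b$ and $w[i_0+1,k]=C'[i_0+1,k]$, Lemma~\ref{no path for block off} applies verbatim: since the target is never passed beyond $w$, no continuation from the remaining chain carries $w[1,k]$ to $C'$, contradicting the existence of the path. The inheritance of the parity condition $(3)$ through the higher segments is the step I expect to absorb most of the work; once the abs-purity and content argument pins down which positions in $\{1,\ldots,i_0\}$ can change sign, the remaining parity computation is the very same $-+$-pair count already carried out for a single segment in Lemma~\ref{no path for block off}.
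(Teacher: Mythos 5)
Your overall reduction is exactly the paper's: factor the path as $u\xrightarrow{T_k}\cdots\xrightarrow{T_{i_0+1}}w\xrightarrow{T_{i_0}}\cdots$, show that the blocked-off configuration at $i_0$ survives the higher segments so that $wC'$ is still blocked off, and then invoke Lemma~\ref{no path for block off} at $w$. The gap is that the decisive step --- inheritance of the parity condition (3) --- is not proved but deferred (``the step I expect to absorb most of the work''), and the mechanism you propose for it would fail. You expect the stage IV downstep contributions to ``cancel in pairs,'' citing the $-+$ count in Lemma~\ref{no path for block off}; but that pairing works there precisely because both positions flipped by a downstep $(j,\overline{m})$ with $m<j\leq i_0$ lie in the range $\{1,\ldots,i_0\}$. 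A downstep $(l,\overline{m})$ coming from a \emph{higher} segment has $l>i_0\geq m$: it flips the signs of positions $l$ and $m$, of which only $m$ is in range, so each such downstep changes the count of $-+$ pairs among the first $i_0$ positions by exactly one whenever $C'(m)>0$. There is no pairing available, and parity preservation cannot be argued this way.

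What the paper does instead is show that no blocked-off-destroying transposition can occur at all, by a step-by-step preservation argument with two cases. If $m=i_0$, the transposition $t_{l\overline{\imath_0}}$ would place $\overline{w(l)}$, of absolute value exceeding $b$, into position $i_0$, pushing that position past its target $b=C'(i_0)$ and contradicting Lemma~\ref{StageIV Monotonicity}. If $m<i_0$ and the transposition is a sign-changing stage IV downstep, then since all values of absolute value at most $b$ are trapped in the first $i_0$ positions, one gets $w(l)\prec\overline{a}\prec C'(l)$ for the value $\overline{a}$ sitting in position $\overline{\imath_0}$, and completing the path would later force a transposition over that value, violating the quantum Bruhat graph criterion. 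A secondary weakness of the same kind appears in your content argument for condition (2): being ``trapped between two endpoints of absolute value $>b$'' does not by itself keep a position's value in the block of large absolute values, because in the circular order the arc between two such endpoints (e.g.\ from $\overline{b+1}$ to $b+1$) can pass through every value of $[\overline{b},b]$; here too the paper's monotonicity contradiction, rather than an arc-trapping argument, is what closes the case.
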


\begin{proof}
It suffices to show that if $CC'$ is blocked off at some $1\leq i <k$ by $b=C'[i]$, then $CT_kT_{k-1}\hdots T_{i+1}C'$ is blocked off at $i$ as well in which case the result follows from Lemma~\ref{no path for block off}.  Suppose there is such an occurrence.  Then for some transposition in $t_{l\overline{m}}\in T_l$ for $1\leq m\leq i<l\leq k$ and the current word $w$ we have that $wC' $ is blocked off at $i$ but $wt_{l\overline{m}}C'$ is not blocked off at $i$.  Due to Lemma~\ref{Lenart 7.1}, we must only check that $t_{l\overline{m}}$ does not make it so that $wt_{l\overline{\imath}}[i]\in [b,\overline{b+1}]$ and that $w t_{l\overline{m}}[1,i]$ and $C'[1,i]$ do not have an even number of $-+$ pairs.

  For the first, we suppose that $m=i$. Since $wC'$ is blocked off at $i$, it must be that $w[i],w[\overline{\imath}\in [\overline{b},b]$ and $w[l], C'[l]\in [b+1,\overline{b+1}]$.  By monotonicity of path given in Lemma~\ref{row monotonicity} we know that $w[l]\prec w[\overline{\imath}]\prec C'[l]$ but then $\overline{b}\preceq w[i]\prec b = C'[i] \prec \overline{w[l]}=wt_{l\overline{\imath}}$ which contradicts Lemma~\ref{StageIV Monotonicity}.  Thus $m\neq i$.
  
  For the second, we suppose that $1\leq m <i$ and $t_{l\overline{m}}$ is a stage IV downstep.  Then $w[l]<0$ and $wt_{l\overline{m}}>0$.  Lemma~\ref{row monotonicity} gives that $w[l]\prec wt_{l\overline{m}}\prec C'[l]$ and since stage IV transpositions cannot change sign $+$ to $-$, it must be that $C'[l]>0$ as well.  Therefore $w[l]\prec \overline{a} \prec C'[l]$ which means that at some point during $T_i$ we would need to transpose over $\overline{a}$ in position $\overline{i}$, breaking the quantum Bruhat criterion.  Thus there is no such stage IV downstep. 

\end{proof}

The procedure $path$-$B$ does not allow adjacent columns to be blocked off at $i$ while applying transpositions in $T_i$.  The following lemma shows that these same transpositions never cause the columns to be blocked off at any other position either.   

\vspace{25pt} \begin{lemma}\label{no block off above ith row}
Let $u = u_0,u_1,\hdots,u_q = v$ be a path as hypothesised in Proposition~\ref{Path Prop}.  Then $u_j[1,l],C'[1,l]$ is not blocked off at $l$ by $C'(l)$ for any $0\leq j\leq q$ and $1\leq l \leq i$ except possibly for $u_0$ if $i=l=k$.
\end{lemma}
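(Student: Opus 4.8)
The plan is to isolate the genuinely new content, which concerns the rows $l<i$, and to track how a single segment of reflections $(i,\cdot)$ can affect the three parts of Definition~\ref{block-off def} there. For $l=i$ no fresh work is needed: as recorded in the remark just before the statement, the procedure $path$-$B$ never produces a word blocked off at the active row $i$ (this rests on the \emph{is-blocked-off} tests built into the stage~I loop together with Lemma~\ref{no path for block off}), and the only block-off at $i$ that the input $u_0$ may carry occurs when $i=k$, in which case it is removed at once by the leading reflection $(k,k+1)$; this is exactly the exception allowed in the statement. So I would fix $l<i$ from here on.

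The first reduction is that stages~I, II and~III never disturb positions $1,\dots,i-1$. Every reflection used in those stages is $(i,m)$ with $m\in\{k+1,\dots,n\}\cup\{\overline{\imath}\}\cup\{\overline{n},\dots,\overline{k+1}\}$, and each of these moves only the entries in positions $i,\overline{\imath},m,\overline{m}$, all of which lie outside $\{1,\dots,i-1\}$. Hence every word $u_j$ produced up to the end of stage~III satisfies $u_j[1,l]=u_0[1,l]$; and $u_0[1,l]$ is not blocked off at $l$ (for $j=0$ and $l<i$ this is the inductive hypothesis over the segments, the base case $i=k$ being supplied by Condition~3 of Conditions~\ref{nec conditions for B columns} applied to the starting column). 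Thus no block-off at $l$ is created before stage~IV, and it remains to examine stage~IV.

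In stage~IV the only reflections touching the window $[1,l]$ are those $(i,\overline{\jmath})$ with $j\le l$; each alters position $j$ (which receives $\overline{u_j(i)}$) and position $i$, the latter lying outside the window. By Remark~\ref{One Downstep per row} the segment $T_i$ contains at most one downstep when $i<k$, and by Remark~\ref{sign criterion remark} an upstep preserves the signs in positions $i$ and $j$ while the lone downstep sends both from $-$ to $+$. Consequently, inside the window, upsteps leave the number of $-+$ pairs of Condition~3 unchanged, while the downstep (when it is of stage~IV with $j\le l$) flips that parity exactly once, since its companion sign change occurs at the out-of-window position $i$. Throughout, Lemma~\ref{row monotonicity} and Lemma~\ref{StageIV Monotonicity} guarantee that each entry in a position $\le l$ moves weakly increasingly in the circular order and never passes its target.

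The crux is then to show that the size and saturation requirements (Conditions~1 and~2 of Definition~\ref{block-off def}) at $l$ are never met at the single step where the window parity could turn odd, and cannot be sustained with odd parity otherwise. I would use the monotonicity to locate the incoming value $\overline{u_j(i)}$ relative to $b:=C'(l)$ and argue, by the same mechanism as in the proof of Lemma~\ref{no path for block off}, that installing it either leaves a value of $\{1,\dots,b\}$ still missing from the first $l$ positions (Condition~2 fails) or pushes $|u_j(l)|$ above $b$ (Condition~1 fails), while the value-changing upsteps are controlled by Lemma~\ref{StageIV Monotonicity} in the same way. Carrying out this bookkeeping uniformly — across the sign of the active entry $u_j(i)$, whether $b$ is attained with equality, and in the residual case $i=k$ that is not covered by Remark~\ref{One Downstep per row} (where $(k,k+1)$ may pass the target) — is the step I expect to be the main obstacle. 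As a conceptual check, a block-off created at some $l<i$ ought to persist, by the monotonicity that keeps the affected entries below their targets, until the active row descends to $l$; at that moment one would have $u[l+1,k]=C'[l+1,k]$ blocked off at $l$ by $C'(l)$, and Lemma~\ref{no path for block off} would forbid completing the path, contradicting the hypothesis of Proposition~\ref{Path Prop} that the path exists.
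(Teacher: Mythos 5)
Your preliminary reductions are correct and agree with the paper's: the case $l=i$ is handled by the procedure $path$-$B$ itself, stages I--III only move positions $i,\overline{\imath},m,\overline{m}$ with $m>k$, so a block-off at $l<i$ could only be created by a stage IV reflection $(i,\overline{m})$ with $m\le l$. But the proof stops exactly where the real work begins: you describe the crux as a bookkeeping step you ``expect to be the main obstacle'' and only sketch a direction for it, namely that installing $\overline{u_j(i)}$ into position $m$ must directly violate Condition 1 or Condition 2 of Definition~\ref{block-off def} at $l$. That is not the mechanism that closes the argument, and it is not what the paper does. In the paper's proof one takes the minimal $j$ and maximal $l$ at which a block-off is created, and shows that if the reflection $t_{i\overline{m}}$ does complete all three block-off conditions at $l$ (Case 1: it places a value of $[\overline{b},b]$, $b=C'(l)$, into position $m$), then the \emph{pre-transposition} word $u_{j-1}$ together with $C'$ was already blocked off at the \emph{active} row $i$ by $C'(i)$ --- the saturation set $\{1,\dots,b\}$ is transported up to row $i$, and one verifies $C'(i)>0$, $|u_{j-1}(i)|\le C'(i)$, and odd parity of $-+$ pairs in the first $i$ positions --- which contradicts procedure $path$-$B$, since that procedure never allows the current word to be blocked off with $C'$ at row $i$. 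A second case (Case 2) shows that a parity-only change (removing a $-+$ pair at position $m$ when the first two block-off conditions at $l$ already hold) would force the moving entry to transpose over $\overline{u_{j-1}(l)}$ in position $\overline{l}$, violating the quantum Bruhat graph criterion. Neither of these reductions appears in your proposal, and your proposed dichotomy (Condition 1 or 2 failing at $l$) is not how the obstruction manifests: the block-off at $l$ is excluded not because its own conditions cannot be assembled, but because assembling them entails a block-off at $i$.

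Your fallback ``conceptual check'' cannot substitute for the missing step, because it is circular in the paper's inductive scheme. You argue that a block-off created at $l<i$ would persist until the active row descends to $l$, where Lemma~\ref{no path for block off} forbids completing the path. But the hypothesis of Proposition~\ref{Path Prop} only provides the path for the single segment $T_i$ (edge labels in $\Gamma_{ki}$); the existence of the later segments $T_{i-1},\dots,T_l$ is precisely what the iteration in the proof of Proposition~\ref{main prop} is trying to establish, and it invokes the present lemma to get Condition 3 for the pair $C_{i-1}C'$ before the next segment can be built. So those later segments are not available to you here. (A minor further imprecision: your claim that the lone stage IV downstep ``flips the window parity exactly once'' holds only when $C'(m)>0$; if $C'(m)<0$ the count of $-+$ pairs in $[1,l]$ is unchanged.)
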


\begin{proof}
This is clearly true for $l=i$ by the procedure $path$-$B$.  Further, this is true for $j=0$ by the Conditions 1 and 2. Suppose that there is some $j>0$ and $l<i$ such that $u_j[1,l],C'[1,l]$ is blocked off at $l$ by $C'(l)$.  Without loss of generality, let $j$ be the minimum and $l$ be the maximum of all such occurences. This means that $u_{j-1}[1,l],C'[1,l]$ is not blocked off at $l$ by $C'(l)$ and $u_j = u_{j-1}t_{i\overline{m}}$ some $1\leq m\leq l$.  We will show that this implies that $u_{j}[1,i],C'[1,i]$ is blocked off at $i$ by $C'(i)$, contradicting procedure $path$-$B$.

  We split this into two cases: one where the transposition $t_{i\overline{m}}$ places a value between $\overline{b}$ and $b$ into position $m$  and a second where the transposition $t_{i\overline{m}}$ changes the number of $-+$ pairs in the first $l$ positions of $u_j,C'$ to be odd.
  
  \vspace{12pt}Case 1: Here we have that $u_{j-1}[i]$ is in $ [\overline{C'[l]},C'[l]]$ but $u_{j-1}[m]$ is not and neither is $C'[i]$ since $u_j$ and $C'$ are blocked off at $l$ by $C'[l]$ by hypothesis.  Also, by Lemma~\ref{row monotonicity}  we have that $u_{j-1}[i]\prec \overline{u_{j-1}[m]}\preceq C'[i]$.
  
  We first show that $C'[i]>0$ and that $|u_{j-1}[i]|\leq C'[i]$.  Note that if $u_{j-1}[i]<0$, then $\overline{u_{j-1}[m]}>0$ since there are no stage IV sign preserving down steps.  Further, this means that $C'[i]>0$ as well, since there are no stage IV sign changing up steps.  Similarly, if $u_{j-1}[i]>0$,  then  $\overline{u_{j-1}[m]}, C'[i] >0$ since all stage IV up steps preserve sign.  So either way, $C'[i]>0$ and it follows that $|u_{j-1}[i]|\leq C'[i]$ since $u_{j-1}[i]$ is in $ [\overline{C'[l]},C'[l]]$ but $C'[i]$ is not.
  
  For the second part of the block off criterion, we not that there are no values between $u_{j-1}[i]$ and $C'[i]$ in $u_{j-1}[i+1,\overline{\imath +1}]$ otherwise the QBG criterion would be broken at some point during the application of transpositions in $T_i$. This, along with the fact that values $\{1,\hdots, C'[l]\}\setminus \{u_{j-1}[i]\}$ are in $u_{j-1}[1,l]$ from the hypothesis, gives that $u_{j-1}[1,i]$ contains the values of $\{1,\hdots, C'[i]\}$ up to absolute value.  Further, by Lemma~\ref{Lenart 7.1} during $T_i$ and the fact that no entry in $\{1,\hdots, C'[i]\}$ can be transposed over $C'[i]$ in position $i$ during $T_{i-1}\hdots T_{1}$ we have that $C'[1,i]$ contains $\{1,\hdots, C'[i]\}$ up to absolute value as well.
  
  To show that $u_{j-1}$ and $C'$ are blocked off at $i$ by $C'[i]$, it only remains to be shown that the number $-+$ pairs in the first $i$ positions of $u_{j-1}$ and $C'$ is odd.  As a starting point, since $u_j$ and $C'$ are blocked off at $l$, it must be that there are an odd number of $-+$ pairs in the first $l$ positions of these two columns.  We claim that there are are no $-+$ pairs in $u_{j-1}[l+1,i-1]$ and $C'[l+1,i-1]$.  Since stage IV moves preserves the signs in both positions $i$ and $m$ or changes both from $-$ to $+$, we get that $u_{j-1}$ and $C'$ also has an odd number of $-+$ pairs in the first $i$ positions.
  
  We conclude by proving the claim.  Suppose that there is some $l<x<i$ such that $u_{j-1}[x]<0$ and $C'[x]>0$. First note that because the transposition $t_{i\overline{m}}$ followed the QBG criterion, it must be that $\overline{n}\preceq u_{j-1}[x]\prec u_{j-1}[m]$.  Also, $C'[i]\prec \overline{u_{j-1}[x]}\preceq n$, otherwise we would transpose over $\overline{u_{j-1}[x]}$ in position $\overline{x}$ at some point during the transpositions in $T_i$ while placing $C'[i]$.  But then $u_{j-1}[x]$ will have to transpose over $\overline{C'[i]}$ in position $\overline{\imath}$ at some point during the application of $T_{i-1}\hdots T_x$ while placing $C'[x]>0$ in position $x$.  This contradics the QBG criterion, and so no such $-+$ pair can exist.

\vspace{12pt} Case 2: Here we show that if the first two block off conditions hold for blocking off $u_{j-1}$ and $C'$ at $l$ then the transposition $t_{i\overline{m}}$ cannot change the number of $-+$ pairs in the first $l$ rows to be odd.  Since there are no changes of positive to negative sign via a stage IV transposition, the change to an odd number of $-+$ pairs must be throught the removal of a $-+$ pair in position $m$.  This means that $u_{j-1}[i]<0$ and  $u_{j}[i]>0$.  By our starting assumption for this case, we know that $C'[l]>0$ and $u_{j-1}[l]$ is in $[\overline{C'[l]},C'[l]]$ but $u_{j-1}[i]<0$ and  $u_{j}[i]>0$ are not. This means that when applying $t_{i\overline{m}}$, $u_{j-1}[i]$ transposes over $\overline{u_{j-1}[l]}$ in position $\overline{l}$, breaking the QBG criterion.
\end{proof}

\begin{proof} \textit{(of Proposition~\ref{main prop})}

The necessity of conditions $1,2,$ and $3$ where shown in Proposition~\ref{Reorder Nec} and Lemma~\ref{nec condition 3}. The uniqueness of path and monotonicity follows from Proposition~\ref{Path Prop}.  To prove that the 3 conditions implies the existence of the chain, we iterate the construction in Proposition~\ref{Path Prop} using Algorithm $Path\_B$ for $i=k,k-1,\hdots,1$.  It remains to be shown that if the 3 necessary conditions hold for $C_iC'$, then they hold for $C_{i-1}C'$ as well. We see that Lemma~\ref{StageIV Monotonicity} gives that the first condition holds as well as that if $C_i[l_1] \prec C'[l_1]\prec C'[l_2]$ then $C_{i-1}[l_1] \prec C'[l_1]\prec C'[l_2]$ for $1\leq l_1< l_2\leq i$.  Now, if there is $1\leq l_1< l_2\leq i$ such that $C_i[l_1] \prec C'[l_2]\prec C'[l_1]$ then we have that $C_i$ and $C't_{l_1l_2}$ are blocked off at $l_1$ by $C'[l_2]$ by hypothesis. Then if $C_{i-1}[l_1] \prec C'[l_2]\prec C'[l_1]$, it must be that  $C_{i-1}$ and $C't_{l_1l_2}$ are blocked off at $l_1$ by $C'[l_2]$ by proof of Lemma~\ref{nec condition 3}. To finish, we acknowledge that $C_{i-1}$ and $C'$ are not blocked off at any $l\leq i-1$ by Lemma~\ref{no block off above ith row}.  
\end{proof}

\section{Proof of Proposition~\ref{SER prop}}\label{L2R}
\subsection{Classifying Split, Extended columns}

We now work towards building a subpath of $\Gamma_r(k)$ between the two height $k$ columns resulting from the split extension of a KN column of height some multiple of $2$ less than $k$.  We do so by first classifying some properties of such pairs of columns, and then showing that these properties are sufficient to build a unique path in the QBG.  For this part, we will follow the work of~\cite{Briggs} where a subcase of such pairs of columns were classified.

\begin{conditions}\label{conditions SER}
{\rm
 Consider the following conditions on a pair of columns $CC'$. 
\begin{enumerate}
\item We have $\{|C(i)|: i = 1,\hdots, k\} = \{|C'(i)|: i = 1,\hdots, k\}$.
\item If $$int(C,C'):=\left(\bigcup\limits_{i=1}^{k}\{j\in [\overline{n}]:C(i)\prec j\prec C'(i)\}\right)\setminus \{\pm C(i): i= 1,\hdots,k\},$$ then we have $int(C,C') = \emptyset$.
\item If $C(i)$ and $C'(i)$ are the same sign, then $C(i)<C'(i)$.  Additionally, there are an even number of entries where $C(i)$ is negative and $C'(i)$ is positive. 
\item $CC'$ follow the Conditions~\ref{nec conditions for B columns}.
\end{enumerate}
}
\end{conditions}

We will first define an \textit{initial matching} on the set of columns $\widehat{KN}_k$ and then show that this  matching satisfies Conditions 1,2, and 3.  We will follow up with a reordering of this matching, and then show that these reordered matchings continue to follow the three conditions as well as Condition 4.  Further, we will later find that these conditions are the necessary requirements to determine a segment of the QBG path.

\begin{definition}\label{initial matching def}{\rm \cite{Briggs}}

We define a pair of columns $BB'$ which we will call the {\rm initial matching}.  Let $B:=\widehat{lA}$ and define $B'$ by matching each value in $B$ from top to bottom with a value in $\widehat{rA}$ as follows:
\begin{enumerate}
\item If $a$ was not involved with the splitting or extending process, match $a$ to itself.
\item If $a\in [n]$ is non-zero and required splitting, match $b$ to $a$ and $\overline{a}$ to $\overline{b}$ where $b\in [n]$ is the value used to split $a$.
\item If $a\in [n]$ is the result of a zero splitting, match $a$ with $\overline{a}$.
\item If $a\in [n]$ is a result of extending, match $\overline{a}$ with $a$. 
\end{enumerate}
\end{definition} 

Since the initial matching is a direct result of the splitting extending algorithms, we will refer to an initial matching and a split extended $KN$ column interchangeably. 

\begin{lemma}\label{initial match has 1,2,3}{\rm \cite{Briggs}}
Any initial matching satisfies Conditions 1,2, and 3.
\end{lemma}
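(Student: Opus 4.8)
The plan is to check Conditions 1--3 of Conditions~\ref{conditions SER} one at a time, in each case running through the four kinds of matched pairs listed in Definition~\ref{initial matching def} and invoking the greedy choices made in the splitting of Definition~\ref{type C splitting} and in the \emph{extend} algorithm. Throughout I would use that $B = \widehat{lA}$ while $B'$ is only a rearrangement of $\widehat{rA}$, so that global statements about $B, B'$ are really statements about the entries of $\widehat{lA}, \widehat{rA}$.

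Condition 1 is the quickest: since $B'$ is a reordering of $\widehat{rA}$, it suffices to see that $\widehat{lA}$ and $\widehat{rA}$ have the same multiset of absolute values. This is visible from the two constructions, as splitting sends $z_i \mapsto t_i$ in $lA$ and $\overline{z_i}\mapsto \overline{t_i}$ in $rA$ while leaving all other entries fixed, and extension appends $\overline{\imath}_t$ to $lA$ and $i_t$ to $rA$; each step contributes the same absolute value on the left and on the right. For Condition 3 I would read off the sign pattern of each matched pair: non-involved entries give equal pairs $(a,a)$; a genuine split of $z_i$ gives $(t_i,z_i)$ and $(\overline{z_i},\overline{t_i})$, which are strictly increasing of constant sign because $t_i<z_i$; a zero-split gives a $+-$ pair $(t,\overline{t})$; and each extension step gives a $-+$ pair $(\overline{\imath}_t, i_t)$. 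Hence same-sign pairs are weakly increasing (with equality only in the non-involved case, matching the $\preceq$ of Proposition~\ref{SER prop}), and the only $-+$ pairs are the extension pairs. Since a KN column of height $r$ occurring in $B^{k,1}$ has $k-r$ even, there are an even number of extension steps, and thus an even number of $-+$ pairs.

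The main obstacle is Condition 2, the emptiness of $int(B,B')$, which is exactly where the greedy nature of the splitting and extension is needed. I would show pairwise that every letter lying strictly between $B(i)$ and $B'(i)$ in the order $\prec_{B(i)}$ already occurs as $\pm B(l)$. For a split pair $(t_i,z_i)$ the open interval is $\{v : t_i<v<z_i\}$, and maximality of $t_i$ forces each such $v$ to satisfy $v\in A$ or $\overline{v}\in A$; in either case $\pm v$ survives into $B=\widehat{lA}$, because if $v$ is itself a split value $z_j$ then its partner $\overline{z_j}=\overline{v}$ is left unchanged in $lA$, and otherwise $v$ or $\overline{v}$ is untouched. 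The negative split pairs are symmetric. For a zero-split pair $(t,\overline{t})$ the interval is $\{t+1,\dots,n\}\cup\{\overline{n},\dots,\overline{t+1}\}$; using that the zero letters are the largest elements of $I$, the value $t$ is chosen with no upper bound beyond the preceding $t$-value, so maximality covers the unbarred half by $\pm$-entries of $A$ as above, and the barred half is covered at the same time. Finally, for an extension pair $(\overline{\imath}_t, i_t)$ the interval consists of the letters of absolute value less than $i_t$, and minimality of $i_t$ guarantees that every such letter is either already present in the split columns or equals an earlier extension value, so again $\pm$ of it lies in $B$. The only real care is the bookkeeping for repeated zeros and for letters sitting between consecutive greedy choices, where one checks that the sole uncovered candidates are the previously selected $t$- or $i$-values, which themselves belong to $B$; once this is verified pairwise, $int(B,B')=\emptyset$. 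This pairwise verification is the argument carried out in~\cite{Briggs}.
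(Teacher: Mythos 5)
The paper does not actually prove this lemma: the statement is imported from Briggs' thesis, with only the citation \cite{Briggs} attached, so there is no in-paper argument to compare yours against. Judged on its own, your verification is essentially correct and is the natural one: you classify the matched pairs into the four types of Definition~\ref{initial matching def} (fixed entries $(a,a)$, genuine split pairs $(t_i,z_i)$ and $(\overline{z_i},\overline{t_i})$, zero-split pairs $(t,\overline{t})$, extension pairs $(\overline{\imath}_t,i_t)$) and check Conditions 1--3 type by type. Your Condition 1 argument (each construction step contributes equal absolute values on the two sides) and Condition 3 argument (the only $-+$ pairs are extension pairs, of which there are $k-r$, an even number since $B^{k,1}\cong B(\omega_k)\sqcup B(\omega_{k-2})\sqcup\ldots$) are right, and you correctly flag that the strict inequality in Condition 3 must be read weakly, since fixed entries give $C(i)=C'(i)$; without that reading the lemma is literally false as stated. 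For Condition 2, your opening claim that maximality of $t_i$ alone places every $v$ with $t_i<v<z_i$ in $\pm A$ is not quite accurate: maximality only covers $v<\min(t_{i-1},z_i)$, and when $t_{i-1}<z_i$ the values in $[t_{i-1},z_i)$ require a recursive argument up the chain $t_1>t_2>\ldots$, at each stage concluding that $v$ either lies in $\pm A$ by maximality of some $t_j$, or equals $t_j$ itself (hence lies in $B$), or sits above $t_j$ and one recurses. You acknowledge exactly this in your closing bookkeeping remark, identifying the previously selected $t$- and $i$-values as the only candidates not covered by maximality or minimality, so the argument closes; the same mechanism handles the zero-split intervals (where, as you note, the zeros being the largest elements of $I$ means the only upper constraint on their $t$-values is the preceding $t$-value) and the extension intervals. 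With that recursion spelled out, your proof is complete and, as far as one can tell, follows the same pairwise verification strategy as the cited source.
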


We now give an algorithm which takes an initial matching $BB'$ and produces what we will call a \textit{corrected matching} $CC'$ where $C$ is the increasing colomn of entries from $B$ and $C'$ is a reordering of $B'$.

\begin{definition}
Given a pair of columns $BB'$ defined by an initial matching, we produce a {\rm corrected matching} $CC'$ by the following algorithm:

Let $C:=B$ and $C': = B'$;

for $i$ from $1$ to $k-1$ do

\hspace{24pt} let $j\geq i$ be such that
 $C'(j) = min_{\prec_{C(j)}}\{C'(l):l=i,\hdots,k\}$;

\hspace{24pt} let $C' = C't_{ij}$;

end;
\end{definition}

We also recall the conditions set on two columns in types $A_{n-1}$ and $C_n$.  

\begin{definition}
We will refer to the following two conditions as {\rm Conditions $4'$}. For any pair of indices $1 \leq i < l \leq k$,
\begin{enumerate}

\item  $C(i)\neq C'(l)$

\item and the statement $C(i) \prec C'(l) \prec C'(i)$ is false
\end{enumerate}
\end{definition}

\begin{lemma}\label{corrected matching}
Any corrected matching $CC'$ satisfies Conditions 1,2, 3, and 4.
\end{lemma}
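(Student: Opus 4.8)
The plan is to follow the greedy reordering defining the corrected matching and verify the four conditions of Conditions~\ref{conditions SER} in turn, using Lemma~\ref{initial match has 1,2,3} as the base case. Since the algorithm never touches $C = B$ and only permutes $C'$, the multiset of entries of $C'$ is unchanged, so Condition (1) of Conditions~\ref{conditions SER} is inherited verbatim from the initial matching. The two strict reorder inequalities then follow from the minimality built into the loop: at step $i$ the entry placed in row $i$ is the $\prec_{C(i)}$-minimal element of $\{C'(l) : l \geq i\}$, so afterwards $C(i) \preceq C'(i) \preceq C'(l)$ in $\prec_{C(i)}$ for every $l > i$. This gives immediately that no $l > i$ has $C(i) \prec C'(l) \prec C'(i)$. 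It also gives $C(i) \neq C'(l)$ for $l > i$: the value $C(i)$ is the very start of $\prec_{C(i)}$, hence the unique minimum, so if it were still available at step $i$ it would be placed in row $i$, and otherwise it was placed in an earlier row; either way it cannot occupy a row $l > i$. Thus the corrected matching satisfies the strict Conditions $4'$, which in particular yield parts (1) and (2) of Conditions~\ref{nec conditions for B columns}.

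The real content is the preservation of Conditions (2) and (3) of Conditions~\ref{conditions SER} and the exclusion of blocked-off pairs (part (3) of Conditions~\ref{nec conditions for B columns}). For Condition (2) I would exploit that every entry of both columns lies in the set $A := \{x \in [\overline{n}] : |x| \in \{|C(l)|\}\}$, so the complementary ``foreign'' values partition the circle $[\overline{n}]$ into arcs, and $int(C,C') = \emptyset$ is equivalent to the statement that each matched pair $(C(i),C'(i))$ lies inside a single arc, read off in the $\prec_{C(i)}$ direction. The greedy step pairs $C(i)$ with the nearest available entry in that direction, so a pair can straddle a foreign wall only if $C(i)$'s arc has already been emptied of unused $C'$-entries; I would exclude this by a counting argument, showing that the distribution of entries across arcs coming from Definition~\ref{initial matching def} (where splitting and extending introduce matched values symmetrically about each wall) guarantees that each arc retains an available partner until its $C$-entries are exhausted.

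For Condition (3) and for the no-blocked-off property I would run a single parity argument. I plan to show, as an invariant maintained through the outer loop, that the number of rows with $C(i) < 0 < C'(i)$ stays even, by reducing to the initial matching via Lemma~\ref{initial match has 1,2,3} and checking that each greedy transposition alters these sign patterns only in pairs. Granting the even global parity, suppose $C C'$ were blocked off at some $i < k$ by $b = C'(i)$. By parts (1) and (2) of Definition~\ref{block-off def} the first $i$ rows exhaust the absolute values $1, \ldots, b$ in both columns, and, crucially using that $C = \widehat{lA}$ is increasing and contains no complementary pair $\{x, \overline{x}\}$, the rows beyond $i$ then contribute an even number of $(-,+)$ sign patterns; combined with part (3) of Definition~\ref{block-off def} (an odd count in rows $1, \ldots, i$) this forces the global count to be odd, contradicting Condition (3). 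Thus no blocked-off pair occurs, giving part (3) of Conditions~\ref{nec conditions for B columns} and completing Condition (4).

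The step I expect to be the main obstacle is the bookkeeping behind Conditions (2) and (3): proving that the greedy choice never prematurely empties an arc (so that matched pairs never cross a foreign wall) and that it preserves the sign parity of the matching. Both are best formulated as loop invariants anchored at the initial matching of Definition~\ref{initial matching def}; the parity invariant is precisely what powers the exclusion of blocked-off pairs, so once it is established the remaining verifications become routine.
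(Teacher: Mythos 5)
Your handling of Condition 1 and of Conditions $4'$ is sound: the transpositions do not change the multiset of entries of $C'$, and the greedy minimality of $C'(i)$ in $\prec_{C(i)}$ among the still-available values does give both $C(i)\neq C'(l)$ and the failure of $C(i)\prec C'(l)\prec C'(i)$ for $l>i$. The genuine gap is that the heart of the lemma --- preservation of Conditions 2 and 3 of Conditions~\ref{conditions SER} under the greedy reordering --- is never proved, only planned (``I would exclude this by a counting argument\ldots'', ``I plan to show\ldots''). This is precisely the content the paper does not reprove either: it cites \cite{Briggs} for Conditions 1, 2, 3 and $4'$, and only adds the blocked-off argument. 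Worse, your key parity claim, that each greedy transposition alters the $-+$ sign patterns ``only in pairs'', is false as a generic statement about transpositions even when $C$ is increasing: if $C(i)>0$, $C(j)<0$ and, before the swap, $C'(i)>0$, $C'(j)<0$, then exchanging $C'(i)$ and $C'(j)$ creates exactly one new $-+$ pair (at row $j$), changing the parity. Ruling such configurations out requires Condition 2 together with $k<n$: one finds $a\in[n]$ with $a,\overline{a}$ lying outside both intervals $[C(i),C'(i)]$ and $[C(j),C'(j)]$, so that the sign changes at most once across the four values involved --- this is exactly how the paper argues the analogous step for reordered matchings in Lemma~\ref{not block off avoiding reorder step}. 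Hence your two loop invariants (the arc/$int$ condition and the parity condition) are coupled, and neither is actually established; until they are, Conditions 2 and 3 for the corrected matching remain unproven.

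Your exclusion of blocked-off pairs also rests on an unjustified step: you assert that, for a pair blocked off at $i<k$, the rows beyond $i$ contribute an even number of $-+$ patterns, offering only ``$C$ is increasing and contains no complementary pair'' as justification; evenness of $\#\{l>i : C'(l)>0\}$ does not follow from this, and you cannot appeal to the eventual contradiction to certify an intermediate claim. The paper's argument is shorter and avoids this entirely: if $CC'$ were blocked off at $i$ by $b=C'(i)$, the odd $-+$ count in rows $1,\ldots,i$ forces some $C(j)<0$ with $j\leq i$; if $i<k$, then since $C$ is increasing we get $C(k)<0$ and $|C(k)|<|C(i)|\leq b$, so by part (2) of Definition~\ref{block-off def} the value $|C(k)|$ must already occur among $|C(1)|,\ldots,|C(i)|$, contradicting the distinctness of absolute values within $C$; and $i=k$ is impossible because Condition 3 gives an even number of $-+$ pairs over all $k$ rows. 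I recommend adopting this direct argument for the blocked-off part, and either citing \cite{Briggs} for Conditions 1--3 and $4'$ (as the paper does) or fully carrying out the coupled invariants described above.
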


\begin{proof}
It was shown in~\cite{Briggs} that the corrected matching $CC'$ satisfies Conditions 1,2,3, and 4'.  It remains to be shown that $4$ holds.  Note that $4'$ implies that parts 1 and 2 of Condition 4.  We finish by showing that the corrected matching $CC'$ is not blocked off for any $1\leq i \leq k$. Suppose that there is such an $i$ where $CC'$ is blocked off.  Since there must be an odd number of $-+$ pairs in the first $i$ rows, there must be at least one negative value in $C$ in position less than or equal to $i$.  Since $C$ is increasing, it must be that $i=k$.  If not, then $|C[k]|<|C[i]|\leq C'[i]$, which contradicts the block off assumption.  However, $CC'$ cannot be blocked off at $k$ because Condition 3 gives that there are an even number of $-+$ pairs.

\end{proof}

We now have that there is a matching which follows the four conditions for when $C$ is ordered increasingly.  Next, we show that there is a matching that follows the four conditions for any reordering of $C$.

\begin{definition}\label{reorder matching}
Given a corrected matching $CC'$ and $\sigma\in S_k$, we produce a {\rm reordered matching} $DD'$ by the following algorithm:

Let $D:=C\sigma$ and $D': = C'\sigma$;

for $i$ from $1$ to $k-1$ do

\hspace{24pt} let $j\geq i$ be such that
 $D'(j) = min_{\prec_{D(j)}}\{D'(l):l=i,\hdots,k\hspace{4pt}\text{such that}\hspace{4pt}DD't_{il}\hspace{4pt}\text{is not blocked off at}\hspace{4pt} i \}$;

\hspace{24pt} let $D' = D't_{ij}$;

end;

\vspace{12pt}  We will refer to the result of each $i^{th}$ iteration  of the algorithm by $DD'_i$.  We let $CC' = DD'_0$.
\end{definition}

\begin{remark}\label{int at i def}
{\rm For the following lemmas, we consider an equivalent version of condition $2$.  For any two columns $CC'$, let $$int_i(C,C'):= \{j\in [\overline{n}]: C(i)\prec j\prec C'(i)\}\cap\{\pm C(i):i=k+1,\hdots,n\}. $$  Then $int(C,C') = \bigcup\limits_{i=1}^k int_i(C,C')$.}
\end{remark}

The following lemma shows that transpositions, which do not result from block-off avoidence, in the reorder algorithm maintain conditions 1,2 and 3.  the method for proving follows similarly to techniques used in~\cite{Briggs}, as it pertained to a subset of columns where the the block-off condition would not appear.

\begin{lemma}\label{not block off avoiding reorder step}
Let $DD'_{i-1}$ be the result of applying ${i-1}$ iterations of the algorithm in~\ref{reorder matching} to corrected columns $CC'$ and suppose that the $i^{th}$ iteration of of the algorithm, which calls for some transposition $t_{ij}$, does not result from a block off avoidance.  If $int_i(D,D'_{i-1})=int_j(D,D'_{i-1})=\emptyset$ and condition 3 holds for $DD'_{i-1}$, then   $int_i(D,D'_{i})=int_j(D,D'_{i})=\emptyset$ and condition 3 holds for $DD'_{i}$.
\end{lemma}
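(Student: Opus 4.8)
The plan is to analyze the single transposition $t_{ij}$ applied to $D'_{i-1}$, keeping in mind that throughout Definition~\ref{reorder matching} the column $D$ is fixed and only $D'$ is permuted, so the pair $DD'$ changes only in rows $i$ and $j$; in particular Condition~1 of Conditions~\ref{conditions SER} is automatic (the multiset of entries is unchanged). Write $a:=D(i)$, $a':=D(j)$, $c:=D'_{i-1}(i)$ and $c':=D'_{i-1}(j)$, so that after the swap row $i$ becomes $(a,c')$ and row $j$ becomes $(a',c)$. For a value $z$ call $z$ \emph{foreign} if $|z|$ does not occur in the column, and for $x\ne y$ write $\langle x,y\rangle_\circ:=\{z:x\prec_x z\prec_x y\}$ for the open forward circular arc from $x$ to $y$. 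By Remark~\ref{int at i def}, $int_m=\emptyset$ exactly when $\langle D(m),D'(m)\rangle_\circ$ contains no foreign value, so the hypotheses say the arcs $\langle a,c\rangle_\circ$ and $\langle a',c'\rangle_\circ$ are foreign-free. Since the step is not block-off avoiding, $c'$ is the genuine $\prec_{a}$-minimum of the remaining entries $\{D'_{i-1}(l):i\le l\le k\}$; in particular $a\preceq c'\preceq c$ in the order $\prec_a$, i.e. $c'$ is the forward-nearest remaining value to $a$.

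First I would dispose of the two interval statements. Because $c'\preceq_a c$, the arc $\langle a,c'\rangle_\circ$ is an initial sub-arc of the foreign-free arc $\langle a,c\rangle_\circ$, hence is itself foreign-free; this gives $int_i(D,D'_i)=\emptyset$, and the same containment exhibits the decomposition $\langle a,c\rangle_\circ=\langle a,c'\rangle_\circ\sqcup\{c'\}\sqcup\langle c',c\rangle_\circ$, so the middle arc $\langle c',c\rangle_\circ$ is foreign-free too. For $int_j(D,D'_i)=\emptyset$ I must show $\langle a',c\rangle_\circ$ is foreign-free, and here I would split on the forward position of $c$ and $c'$ as seen from $a'$: either $c\in\langle a',c'\rangle_\circ$, in which case $\langle a',c\rangle_\circ\subseteq\langle a',c'\rangle_\circ$ is foreign-free, or $c'\in\langle a',c\rangle_\circ$, in which case $\langle a',c\rangle_\circ=\langle a',c'\rangle_\circ\sqcup\{c'\}\sqcup\langle c',c\rangle_\circ$ is a union of foreign-free arcs. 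Since $c\ne c'$, exactly one alternative holds, so $int_j$ is preserved, and with it Condition~2 (only rows $i,j$ were touched).

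The substantive part is Condition~3. Its ``same sign forces increase'' clause only needs re-checking in rows $i$ and $j$; there I would use that $c'$ is the forward-nearest \emph{present} value to $a$ together with the foreign-freeness of $\langle a,c'\rangle_\circ$ to pin down the sign of $c'$ and its ordering against $a$, and symmetrically treat $(a',c)$ via $\langle a',c'\rangle_\circ$ and the structure guaranteed by Lemma~\ref{corrected matching}. The parity clause is the crux: a direct count shows the number of $-+$ rows changes by $([a<0]-[a'<0])([c'>0]-[c>0])\in\{-1,0,1\}$, so parity is preserved unless $a,a'$ have opposite signs and $c,c'$ have opposite signs simultaneously. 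The main obstacle is to exclude exactly this configuration. I expect to do so by a careful but elementary sign bookkeeping: assuming it occurs, the foreign-freeness of $\langle a,c\rangle_\circ$ and $\langle a',c'\rangle_\circ$ together with the minimality of $c'$ force so many present values into these arcs that either a foreign value is trapped (contradicting a hypothesis) or the column is full, in which case the even $-+$ count of the corrected matching (Lemma~\ref{corrected matching}) together with the already-sorted rows $1,\dots,i-1$ leaves no room for a parity flip. This delicate case analysis is the heart of the argument and follows the scheme used in~\cite{Briggs} for the block-off-free subcase, now specialized to a non--block-off-avoiding step.
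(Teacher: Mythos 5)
Your handling of the interval statements is correct, and in fact slightly cleaner than the paper's: the paper first rules out the interleaved configuration $a\prec c'\prec a'\prec c$ and only then deduces the containments, whereas your arc decompositions give $int_i(D,D'_{i})=int_j(D,D'_{i})=\emptyset$ without that exclusion. Your reduction of the parity clause to the identity $\bigl([a<0]-[a'<0]\bigr)\bigl([c'>0]-[c>0]\bigr)$ is also a correct reformulation of what must be ruled out. The problem is that the two substantive claims of Condition 3 --- the same-sign monotonicity in rows $i$ and $j$, and the impossibility of having simultaneously $\mathrm{sgn}(a)\neq\mathrm{sgn}(a')$ and $\mathrm{sgn}(c)\neq\mathrm{sgn}(c')$ --- are exactly the points you leave as a plan (``I would use\dots'', ``I expect to do so\dots''), and the plan as sketched is not an argument: the disjunct ``or the column is full'' cannot occur, since the split--extend setup has $k<n$ throughout (this is also what the paper leans on repeatedly), and the fallback via Lemma~\ref{corrected matching} and the ``already-sorted rows $1,\dots,i-1$'' does not engage with the actual obstruction, which is purely local to rows $i$ and $j$.

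The missing idea, which is the engine of the paper's proof, is this. Since $k<n$ there is a symbol $a_0\in[n]$ with $a_0,\overline{a_0}$ both foreign, and by hypothesis both open arcs $\langle a,c\rangle_\circ$ and $\langle a',c'\rangle_\circ$ avoid $\{a_0,\overline{a_0}\}$. First, the interleaved case $a\prec c'\prec a'\prec c$ is impossible: there the two open arcs cover the entire circle, so one of them would contain $a_0$. In the two remaining configurations, $a'\prec a\prec c'\prec c$ and $a\prec a'\prec c'\prec c$, all four entries lie on a single arc missing $\{a_0,\overline{a_0}\}$, and along any such arc the cyclic sign pattern of $[\overline{n}]$ changes at most once. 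This one observation finishes both outstanding claims simultaneously: a parity flip would require sign changes both between the first two and between the last two of the four entries read along the arc (two changes, contradiction --- this is the paper's check of the eight admissible sign patterns), and a same-sign decrease in row $i$ or row $j$ would force the corresponding interval to wrap past $a_0$ or $\overline{a_0}$. Without this (or an equivalent) mechanism, your proposal establishes only the interval preservation, so it has a genuine gap at what you yourself identify as the heart of the argument.
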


\begin{proof}
We begin by noting that if $i=j$, then the result follows, as we then have $DD'_{i-1}=DD'_{i}$.  Now assume that $j>i$.  Since the algorithm did not choose $D'_{i-1}[j]$ as an act of block-off avoidence, we have that $D[i]\prec D'_{i-1}[j]\prec D'_{i-1}[i]$.  There are then three places where $D[j]$ could lie within this ordering.  Notice that we cannot have $D[i]\prec D'_{i-1}[j]\prec D[j]\prec D'_{i-1}[i]$, as it would break the hypothesis that $int_i(D,D'_{i-1})=int_j(D,D'_{i-1})=\emptyset$ because $k<n$. It must then be that we have either $ D[j]\prec D[i]\prec D'_{i-1}[j]\prec D'_{i-1}[i]$ or $D[i]\prec D[j]\prec D'_{i-1}[j]\prec  D'_{i-1}[i]$, which we will refer to as case $I$ and case $II$ respectively. Notice that in both cases, we have that $int_{i}(D,D'_{i}), int_j(D,D'_{i})\subseteq int_i(D,D'_{i-1})\bigcup int_j(D,D'_{i-1})=\emptyset $.  It remains to be shown that condition 3 holds for $DD'_{i}$.  This clearly holds for pairs in positions other tha $i$ and $j$, as they are unchanged in this stage of the algorithm and it was assumed that condition 3 held for $DD'_{i-1}$.

\vspace{12pt} We now show the monotonicity in matched pairs of the same sign and first check for position $i$.  Since $k<n$ and   $int_i(D,D'_{i})=\emptyset$, there is some $a\in [n]$ such that $a,\overline{a}\notin [D[i],D'_{i}[i]]$. Suppose that $sgn(D[i]) = sgn(D'_{i}[i])$. If condition 3 does not hold at $i$, we have that $D'_{i}[i]<D[i]$.  If $D[i]\in [n]$, then we must have that $D[i]\prec \overline{a}\prec D'_{i}[i]$, a contradiction to our choice of $a$.  Similarly, if $D[i]\in [\overline{n}]\setminus [n]$, then $D[i]\prec a \prec D'_{i}[i]$, again contradicting  our choice of $a$. Thus $D[i]<D'_{i}[i]$ as desired.  The proof for position $j$ follows similarly.

\vspace{12pt}  We now show that the number of $-+$ pairs remains unchanged after the application of $t_{ij}$ to $D'$.  As before, since $k<n$ and   $int_i(D,D'_{i-1})\bigcup int_j(D,D'_{i-1})=\emptyset$ , there is some $a\in [n]$ such that $a,\overline{a}\notin [D[i],D'_{i-1}[i]]$ and $a,\overline{a}\notin [D[j],D'_{i-1}[j]]$.  This means that the progression through the four values in the orderings of case $I$ and case $II$ above never pass over the values $a$ or $\overline{a}$ and so the sign can only change once (either positive to negative or negative to positive).  If we consider all eight configurations of four $+$ and $-$ values ordered with only one sign change (for example, $+++-$ or $--++$) and compare them to the inequalities of case $I$ and case $II$, we see that in both cases the number of $-+$ pairs is preserved.
\end{proof}

\begin{corollary}\label{non block off avoidance preserves two and three}
Let $DD'_{i-1}$ be the result of applying $i-1$ iterations of the algorithm in~\ref{reorder matching} to corrected columns $CC'$ and suppose that the $i^{th}$ iteration of of the algorithm, which calls for some transposition $t_{ij}$, does not result from a block off avoidance.  If conditions 2 and 3 hold for $DD'_{i-1}$, then conditions 2 and 3 hold for $DD'_{i}$.
\end{corollary}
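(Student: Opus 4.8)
The plan is to deduce the corollary almost directly from Lemma~\ref{not block off avoiding reorder step}, using the reformulation of Condition~2 supplied by Remark~\ref{int at i def}. First I would recall that, by that remark, Condition~2 for a pair $DD'$ is equivalent to the statement that $int_l(D,D')=\emptyset$ for every $l=1,\ldots,k$. Thus the hypothesis that Condition~2 holds for $DD'_{i-1}$ unpacks into $int_l(D,D'_{i-1})=\emptyset$ for all $l$; in particular $int_i(D,D'_{i-1})=int_j(D,D'_{i-1})=\emptyset$.

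Next I would exploit the structural fact that the $i$th iteration of the reorder algorithm of Definition~\ref{reorder matching} never alters $D$ and modifies $D'$ only through the single transposition $t_{ij}$, so that $D'_i=D'_{i-1}t_{ij}$ agrees with $D'_{i-1}$ in every position except $i$ and $j$, where the two entries are swapped. Since $int_l(D,D')$ depends only on the entries $D(l)$ and $D'(l)$ together with the tail set $\{\pm D(m):m=k+1,\ldots,n\}$ --- and the latter is fixed because $D$ is untouched --- it follows that $int_l(D,D'_i)=int_l(D,D'_{i-1})$ for every $l\neq i,j$. These intervals therefore remain empty, and to establish Condition~2 for $DD'_i$ it only remains to control positions $i$ and $j$.

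This is exactly where the remaining hypotheses feed in. Because the $i$th iteration does not arise from a block-off avoidance, and because $int_i(D,D'_{i-1})=int_j(D,D'_{i-1})=\emptyset$ (just observed) together with Condition~3 for $DD'_{i-1}$ (assumed), Lemma~\ref{not block off avoiding reorder step} applies verbatim and simultaneously yields $int_i(D,D'_i)=int_j(D,D'_i)=\emptyset$ and Condition~3 for $DD'_i$. Combining the two cases, $int_l(D,D'_i)=\emptyset$ for all $l$, which is precisely Condition~2 for $DD'_i$, while Condition~3 for $DD'_i$ has just been produced as well.

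I do not expect any genuine obstacle here: the corollary is the globalization of the lemma from the two active positions $i,j$ to all positions, and the only substantive input --- the behaviour of the intervals and the parity of $-+$ pairs at positions $i$ and $j$ --- is supplied wholesale by the lemma. The single point deserving care is the verification that a transposition acting on $D'$ alone leaves $int_l$ unchanged for $l\neq i,j$, which is immediate once one notes that $D$ is never modified during the algorithm and hence the tail set defining the intervals is constant throughout.
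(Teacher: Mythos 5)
Your proposal is correct and is exactly the argument the paper intends: the corollary is stated without separate proof as an immediate consequence of Lemma~\ref{not block off avoiding reorder step}, and your unpacking — Condition 2 for $DD'_{i-1}$ is equivalent (via Remark~\ref{int at i def}) to all $int_l(D,D'_{i-1})$ being empty, the positions $l\neq i,j$ are untouched because only $D'$ is transposed while $D$ and hence the tail set stay fixed, and the lemma supplies emptiness at positions $i,j$ together with Condition 3 — is precisely the intended globalization. No gaps.
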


\begin{lemma}\label{block off avoiding reorder step}
Let $DD'_{i-1}$ be the result of applying $i-1$ iterations of the algorithm in~\ref{reorder matching} to corrected columns $CC'$ and suppose that the $i^{th}$ iteration of of the algorithm, which calls for some transposition $t_{ij}$, results from a block off avoidance.  If conditions 2 and 3 hold for $DD'_{i-1}$, then conditions 2 and 3 hold for $DD'_{\hat{\jmath}}$ for some $i<\hat{\jmath}\leq j$.
\end{lemma}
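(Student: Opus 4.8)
The plan is to isolate a single block-off avoidance and show that whatever violation of Conditions 2 and 3 it introduces is repaired within at most $j-i$ further iterations. Write $v_1:=D'_{i-1}(j)$ for the value the algorithm actually moves into position $i$, and let $v_0$ be the naive minimal value $\min_{\prec_{D(i)}}\{D'_{i-1}(l):l\ge i\}$ that was rejected; thus $D(i)\preceq v_0\prec v_1$ and placing $v_0$ at position $i$ would block off $DD'$ at $i$ by $v_0$. By Definition~\ref{block-off def} this forces $v_0>0$, $|D(i)|\le v_0$, the content $\{1,\dots,v_0\}$ to be present up to sign among positions $1,\dots,i$ of both $D$ and $D'$, and an odd number of $-+$ pairs among rows $1,\dots,i$. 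First I would record why $v_1$ fails to block off: since a split column contains at most one of $x,\overline{x}$, we have $v_1\ne\pm v_0$, so moving $v_1$ into row $i$ leaves the content set $\{1,\dots,v_0\}$ incomplete in $D'[1,i]$; it is precisely this content failure that suppresses the block-off, and it guarantees that $v_0$ (the value that would have completed the set) is still to be placed at some later position.

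Next I would track the defect. Applying $t_{ij}$ changes only rows $i$ and $j$ of $D'$, so by the four-value analysis of Lemma~\ref{not block off avoiding reorder step} the only $int$-sets and sign comparisons that can change are those of rows $i$ and $j$. I would show $int_j(D,D'_i)$ stays empty and that the genuine violation is confined to row $i$: either an outside value now lies in the circular interval $(D(i),v_1)$, giving $int_i(D,D'_i)\ne\emptyset$, or the parity of $-+$ pairs through row $i$ has become even, breaking the balance Condition 3 maintains. The heart of the matter is that this defect is exactly the one the deferred value $v_0$ is destined to cancel: once $v_0$ is placed at a later row $\hat{\jmath}$, the circular interval there absorbs the intervening outside values and the sign count is corrected, so that $int=\emptyset$ and Condition 3 are restored simultaneously.

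The bound $i<\hat{\jmath}\le j$ is where I would spend the most care. After the swap, the value originally at row $i$ of $D'_{i-1}$ sits at row $j$, and $v_0$ together with every value strictly between $v_0$ and $v_1$ in $\prec_{D(i)}$ remains among rows $i{+}1,\dots,j$. Since the algorithm consumes values in increasing circular order relative to the corresponding left-column entry, skipping an admissible value only for block-off avoidance, these deferred values are all placed by iteration $j$ at the latest; a counting argument matches the number of positions in the window $(i,j]$ with the number of deferred values. Corollary~\ref{non block off avoidance preserves two and three} then guarantees that every intermediate non-avoiding step preserves Conditions 2 and 3, so the configuration returns to admissibility no later than iteration $j$, and Lemma~\ref{corrected matching} supplies the base case; the first restoring index is the desired $\hat{\jmath}$.

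The main obstacle, as in the preceding lemmas, is the simultaneous bookkeeping of the two invariants across the window $(i,\hat{\jmath}]$: keeping every $int_l$ empty (Condition 2) while controlling both same-sign monotonicity and the parity of $-+$ pairs (Condition 3). The parity is the subtle point—each transposition $t_{il}$ merely reorders $D'$, yet moving entries into rows where $D$ has different signs can alter how many rows carry a $-+$ pair, so one must run through all sign configurations of the four entries involved exactly as in Lemma~\ref{not block off avoiding reorder step}, and verify that the reorderings in $(i,\hat{\jmath}]$ restore even parity precisely when interval emptiness is recovered, neither a step early nor late. I would handle this by the same case split (on the signs of $v_0$ and $v_1$, and on whether intermediate steps are themselves block-off avoidances) together with an induction on the number of nested avoidances inside the window.
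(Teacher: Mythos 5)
Your proposal misidentifies what actually breaks after the avoiding swap, and this error propagates through the rest of the argument. You claim the defect is confined to row $i$ --- ``either an outside value now lies in the circular interval $(D(i),v_1)$, giving $int_i(D,D'_i)\ne\emptyset$, or the parity of $-+$ pairs through row $i$ has become even'' --- while $int_j(D,D'_i)$ stays empty. This is exactly backwards. The paper first establishes that the rejected value is the one sitting at position $i$ itself (otherwise the algorithm would have performed $t_{ii}$), so the block-off hypothesis applies to $b=D'_{i-1}(i)>0$. Combined with Condition 3 (which requires an \emph{even total} number of $-+$ pairs in the two columns, not a prefix count --- your phrase ``parity through row $i$'' conflates Condition 3 with the third block-off clause), the odd prefix count forces the existence of a row $j'>i$ carrying a $-+$ pair, and the block-off content clause forces the chain of inequalities $D(j')\prec \overline{D'_{i-1}(i)}\prec D(i)\prec D'_{i-1}(i)\prec D'_{i-1}(j)\preceq D'_{i-1}(j')$. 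From this one gets $int_i(D,D'_i)\subseteq int_{j'}(D,D'_{i-1})=\emptyset$, and since both swapped values $D'_{i-1}(i)$ and $D'_{i-1}(j)$ are positive, the total $-+$ count is unchanged; so neither of your proposed violations can occur. The genuine defect is at row $j$: the small positive rejected value $D'_{i-1}(i)$ now sits underneath the larger positive $D(j)$, so it is the \emph{same-sign monotonicity} of Condition 3 (and possibly Condition 2) at row $j$ that fails. Since your bookkeeping is organized around cancelling a defect at row $i$ that does not exist, the subsequent repair analysis does not connect to the actual violation.

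Moreover, your mechanism for restoring the conditions by iteration $j$ is a counting heuristic (``the algorithm consumes values in increasing circular order\ldots matches the number of positions in the window $(i,j]$ with the number of deferred values'') that cannot work as stated: the circular orders $\prec_{D(l)}$ change from row to row, so there is no global monotone notion of consumption to count against. The paper's proof instead runs entirely through the row $j'$ above: the displayed inequalities show that iterations $i+1,\dots,\min(j,j')-1$ involve no block-off avoidance and never call a transposition touching position $j$, so Lemma~\ref{not block off avoiding reorder step} applies row by row and the defect stays isolated at row $j$; then at iteration $\hat{\jmath}=\min(j,j')$ the transposition exchanging rows $j$ and $j'$ simultaneously restores monotonicity at row $j$ and Conditions 2 and 3, which is what yields the bound $\hat{\jmath}\le j$. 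Your proposal never uses the $-+$ pair forced below row $i$, so it has no candidate for $\hat{\jmath}$, no way to show the intermediate iterations leave row $j$ untouched, and no actual repair step; this is a genuine gap at the core of the argument.
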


\begin{proof}
The case where $i=j$ is trivial, so suppose that $j>i$.   Since the algorithm choose $D_{i-1}'[j]$ from a block-off avoidence, we have that $D[i]\prec D'_{i-1}[i]\prec D'_{i-1}[j]$.  There are then three places where $D[j]$ could lie in within this ordering.  If we have $D[j]\prec D[i]\prec D'_{i-1}[i]\prec D'_{i-1}[j]$ or $D[i]\prec D[j]\prec D'_{i-1}[i]\prec D'_{i-1}[j]$, then we again have that $int_i(D,D'_{i}), int_j(D,D'_{i})\subseteq int_i(D,D'_{i-1})\bigcup int_j(D,D'_{i-1})=\emptyset $ and so condition 2 still holds.  Condition 3 then holds via similar argumentation to that in the proof of Lemma~\ref{not block off avoiding reorder step}.

\vspace{12pt} Unfortunately, if we have $D[i]\prec D'_i[i]\prec D[j]\prec D'_i[j]$, we are not guaranteed that $int_j(D,D'_{i})=\emptyset$ or the monotonicity of values in row $j$ of $DD'_{i}$.  For this case, we will first show that the pairity condition of $-+$ pairs still holds for $DD'_{i}$ and that the monotonicity of same sign row entries holds for all rows not $j$. Then we use Lemma~\ref{not block off avoiding reorder step} and the fact that the transposition is avoiding a block off configuration to then show that by the end of the $j^{th}$ iteration of the algorithm, conditions 2 and 3 are restored.

\vspace{12pt}  We first note that $DD'_{i-1}$ must have been blocked off at $i$ by $D'_{i-1}[i]$.  If not, the $i^{th}$ iteration of the algorithm would have called for the $t_{ii}$.  Since $DD'_{i-1}$ was blocked off at $i$ (and therefore an odd number of $-+$ pairs in the first $i$ rows), and condition 3 held for $DD'_{i-1}$ (so there is an even number of $-+$ pairs in the columns), there must be some $-+$ pair in some row $j'>i$.  Since all of the values (up to absolute value) between $\overline{D'_{i-1}[i]}$ and $D'_{i-1}[i]$ must lie in the first $i$ rows of $DD'_{i-1}$, it must be that $\overline{n}\preceq D[j']\prec \overline{D'_{i-1}[i]}\prec D[i] \prec D'_{i-1}[i]\prec D'_{i-1}[j]\preceq D'_{i-1}[j']\preceq n$.  This containment of inequalities along with the fact that condition 3 held for $DD'_{i-1}$ gives that $int_i(D,D'_{i}) \subset int_{j'}(D,D'_{i}) =int_{j'}(D,D'_{i-1}) = \emptyset$.  Furthermore, the algorithm gives that $D'_{i-1}[i]\prec D'_{i-1}[j]\prec D'_{i-1}[j']$, so it must be that $D'_{i-1}[j]>0$. Thus $D'_{i-1}[i]\prec D[j] \prec D'_{i-1}[j]$ are all positive values and so $DD'_{i}$ fails the monotonicity of condition 3 in row $j$.   Since $DD'_{i-1}$ was blocked off at $i$, it must be that $D'_{i-1}[i]>0$ as well.  Thus the number of $-+$ pairs is unchanged by the transposition $t_{ij}$ and so the number of $-+$ pairs in $DD'_{i}$ remains even. Finaly,  since we have from above that $ \overline{D'_{i-1}[i]}\prec D[i] \prec D'_{i-1}[i]\prec D'_{i-1}[j]\preceq n$, we have the monotonicity of same sign values (if it applies) in row $i$ of $DD'_{i}$.  

\vspace{12pt} Let $\hat{\jmath}=min({j,j'})$.  Note that during the $i+1$ through $\hat{\jmath}-1$ iterations of the algorithm, there will never be a block off avoidance move.  Indeed, it would have to be blocked off at some value greater that $D'_{i-1}[i]$, but this value is in position $j$.  We further note that none of these stages of the algorithm will ever call for a transposition with position $j$.  This follows from the fact that none of the pairs in these iterations are $-+$ pairs by our choice of $j'$ and that $D'_{i-1}[i]$ is the minimum positive value below position $i$.  Therefore, by Lemma~\ref{not block off avoiding reorder step}, we have that $DD'_{\hat{\jmath}-1}$ still follows conditions 2 and 3 everywhere but in position $j$.  We now break into two cases, depending on the value of $\hat{\jmath}$.

\vspace{12pt} Suppose that $\hat{\jmath}=j'$.  Then the algorithm will call for the transposition $t_{j'j}$ and if we consider the fact that $int_{j'}(DD'_{j'-1})=\emptyset$ and the ordering of these values given above, we see that both conditions 2 and 3 once again hold.

\vspace{12pt}  Suppose that $\hat{\jmath}=j$.  Then the $j^{th}$ iteration of the algorithm again calls for the transposition $t_{j'j}$ and places some $x$ in a position where $D[j]\prec x \prec D'_{i-1}[j']$ and conditions 2 and 3 are restored.
\end{proof}

We will use the following proposition to prove Proposition~\ref{SER prop}.

\begin{proposition}\label{l2r reorder follows all conditions}
A corrected matching $CC'$ satisfies conditions 1,2,3 and 4 if and only if the corresponding reordered matching $DD'$ satisfies conditions 1,2,3 and 4.
\end{proposition}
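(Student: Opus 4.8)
The plan is to prove the two implications separately, with the reverse direction being immediate and the forward direction carrying all the content. For the reverse direction, note that $CC'$ is by hypothesis a corrected matching, so Lemma~\ref{corrected matching} already guarantees it satisfies Conditions 1--4 unconditionally, and nothing further is needed. For the forward direction I would induct on the successive states $DD'_0,DD'_1,\hdots,DD'_{k-1}=DD'$ produced by the algorithm of Definition~\ref{reorder matching}, showing that each retains the relevant conditions. Condition 1 is global and immediate: every step either reorders both columns by the same permutation $\sigma$ (the initialization $D:=C\sigma$, $D':=C'\sigma$) or transposes two entries of $D'$ among themselves, so the multisets of entries of each column, and hence the multisets of absolute values, never change from those of $CC'$. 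The base case for Conditions 2 and 3, namely the starting pair $DD'_0$ obtained from $CC'$ by the common reordering $\sigma$, is equally clear, since $int(\cdot,\cdot)$ is a union over rows and the number of $-+$ pairs is a sum over rows, both invariant under simultaneously permuting the rows of the two columns.

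The inductive step for Conditions 2 and 3 is precisely what the supporting lemmas were set up to provide. Global emptiness of $int$ forces each $int_i=\emptyset$ (Remark~\ref{int at i def}), so the hypotheses of Lemma~\ref{not block off avoiding reorder step} hold at every iteration. When the $i$-th iteration is not a block-off-avoidance step, Corollary~\ref{non block off avoidance preserves two and three} propagates Conditions 2 and 3 to $DD'_i$. When it is a block-off-avoidance step, Lemma~\ref{block off avoiding reorder step} shows that Conditions 2 and 3 are disturbed only in a single row and are fully restored at some later index $\hat{\jmath}$, with the intervening iterations leaving the restored rows untouched. I would therefore run the induction over these restoration points rather than over single iterations, so that after the final iteration $DD'$ still satisfies Conditions 2 and 3.

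Condition 4 I would read off the algorithm directly. Part 3 (no block-off at any $i<k$) holds because the $i$-th iteration explicitly chooses $D'(i)$ so that $DD'_i$ is not blocked off at $i$, and positions $1,\hdots,i$ are frozen thereafter, since every later iteration transposes positions strictly above $i$. Part 2 is exactly the selection rule of the algorithm: if some $l>i$ has $D(i)\prec D'(l)\prec D'(i)$ in the final filling, then $D'(l)$ was a $\prec_{D(i)}$-smaller candidate than the chosen $D'(i)$ at iteration $i$, so it could only have been rejected because $D$ and $D't_{il}$ were blocked off at $i$ by $D'(l)$, which is the escape clause of Condition 4(2). Part 1 then follows from Condition 2 together with the same-sign monotonicity of Condition 3, exactly as in the derivation that Conditions $4'$ imply Condition 4(1) used in the proof of Lemma~\ref{corrected matching}.

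The main obstacle is the bookkeeping around block-off-avoidance steps, the only places where an iteration is forced to pick a non-minimal entry and thereby temporarily breaks the per-row emptiness of $int$ and the monotonicity in one row. The delicate point, already isolated in Lemma~\ref{block off avoiding reorder step}, is to check that the algorithm necessarily revisits the offending row within a bounded number of steps, at the index $\hat{\jmath}$, restores both conditions there, and creates no new block-off in the interim; arranging the induction so that these temporary violations never accumulate is the crux of the argument.
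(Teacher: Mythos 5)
Your proof is correct and takes essentially the same route as the paper's: Condition 1 is immediate because the entries are unaltered, Conditions 2 and 3 propagate by iterating Corollary~\ref{non block off avoidance preserves two and three} and Lemma~\ref{block off avoiding reorder step}, and Condition 4 is read off from the selection rule (block-off avoidance) of the reordering algorithm. Your observation that the reverse implication is immediate from Lemma~\ref{corrected matching} is a clean way of making precise what the paper dismisses as ``clear by the same reasoning,'' and your extra bookkeeping (base-case invariance under $\sigma$, induction over restoration points $\hat{\jmath}$) only fills in details the paper leaves implicit.
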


\begin{proof}
We first show that if $CC'$ follows the conditions, then so to does the corresponding pair $DD'$.  Condition 1 clearly holds, as the values themselves are unaltered in the columns when converting from a corrected matching to a reordered matching.  Iterations of Corollary~\ref{non block off avoidance preserves two and three} and Lemma~\ref{block off avoiding reorder step} give that conditions 2 and 3 hold for the reordered columns $DD'$.  Condition 4 holds because it is equivalent to the reorder algorithm.  We note here that $DD'$ is not blocked off at $k$ because condition 3 gave that there are an even number of $-+$ pairs in the two columns.  The opposite implication is clear by the same reasoning.
\end{proof}

It remains to be shown that these same four conditions on two columns are exactly the conditions needed to describe split extended $KN$ columns of type $B_n$.

\begin{proposition}\label{Conditions are KN columns}
The set of corrected matchings $DD'$ following conditions 1,2,3 and 4 are in bijection with the split extended reordered $KN$ columns of type $B_n$.
\end{proposition}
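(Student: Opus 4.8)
The forward direction is essentially already in hand: by Lemmas~\ref{initial match has 1,2,3} and~\ref{corrected matching}, the corrected matching attached to any split, extended KN column satisfies Conditions~\ref{conditions SER}(1)--(4), and by Proposition~\ref{l2r reorder follows all conditions} the same holds after any reordering. Invoking Proposition~\ref{l2r reorder follows all conditions} once more, it is enough to prove the statement for \emph{corrected} matchings $CC'$ (those with $C$ increasing), since reordering is a conditions-preserving bijection between corrected and reordered matchings that corresponds to reordering the underlying KN column. Thus the whole claim reduces to constructing an inverse $\Psi$ to the forward map $\Theta\colon A\mapsto(\widehat{lA},\widehat{rA})$ (reassembled into its corrected matching) and checking that the two composites are the identity.

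The plan for $\Psi$ is to read off the splitting and extension data directly from the sign pattern of the matched pairs $(C(i),C'(i))$, whose absolute-value content Condition~1 forces to agree. First I would classify each row $i$ by $\mathrm{sgn}(C(i))\,\mathrm{sgn}(C'(i))$: a fixed pair $C(i)=C'(i)$ records an untouched letter; a $++$ pair $t:=C(i)<C'(i)=:z$, together with its conjugate $--$ pair $(\overline{z},\overline{t})$, records a split letter $z$ with split value $t$, so that both rows are reassembled into the pair $z,\overline{z}$ of the original column; a $+-$ pair records a letter $0$ arising from a zero-splitting; and a $-+$ pair records a couple of letters appended by \emph{extend}, which is simply deleted. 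Condition~3 guarantees an \emph{even} number of $-+$ pairs, matching the fact that \emph{extend} always appends an even number of boxes (the height defect $k-r$), while Conditions~2 and~3 are exactly what force the $++$/$--$ rows to occur in conjugate couples with a consistent split value, so the classification is unambiguous. Sorting the reconstructed letters produces a column $A=\Psi(CC')$ of the correct height $r$, and I would show it is the unique KN column with $\Theta(A)=CC'$, the uniqueness coming from the analogue of Remark~\ref{main theorem remark}.

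The substantive step is to verify that the column $A$ produced this way is a genuine KN column, i.e.\ satisfies the inequalities of Definition~\ref{def KN column}, and conversely that those inequalities are equivalent to Conditions~1--4 on $CC'$. Here Condition~2, $int(C,C')=\emptyset$, is the key tool: the absence of intermediate entries forces the split value $t$ attached to each split letter $z$ to be the \emph{greatest} admissible value below $z$ not already used, which is precisely the rule defining $lC$ and $rC$ in Definition~\ref{type C splitting}, and it likewise pins each appended letter to the \emph{minimal} unused value prescribed by \emph{extend}. The condition $a+b\le j$ of Definition~\ref{def KN column} I expect to translate into the combination of Condition~2 with the monotonicity and parity of Condition~3 once the extension rows are removed, while Condition~4 supplies the ordering consistency needed so that $C'$ is the correctly sorted partner of $C$. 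I would then close the loop by tracing a KN column through \emph{split\_B}, \emph{extend}, and the corrected reordering to check $\Psi\circ\Theta=\mathrm{id}$, and by reassembling, splitting, and extending a pair $CC'$ satisfying Conditions~1--4 to check $\Theta\circ\Psi=\mathrm{id}$.

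The main obstacle I anticipate is precisely this last equivalence between the KN inequalities and Conditions~1--4 in the presence of all three phenomena at once --- nonzero splittings, zero-splittings, and extension boxes --- since these interact through the shared pool of available letters drawn on by both \emph{split\_B} and \emph{extend}. The delicate point is to show that Condition~2 simultaneously certifies the greedy maximal choice of each split value and the greedy minimal choice of each extension letter without the two selection processes colliding. Because \cite{Briggs} (via Lemmas~\ref{initial match has 1,2,3}--\ref{corrected matching}) treats only a subcase of such pairs, the genuinely new work is isolating how the letter $0$ and the $+-$ pairs it creates fit into this bookkeeping and confirming that they never produce a spurious intermediate value or upset the parity count governing the blocked-off condition.
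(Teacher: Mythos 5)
Your opening reduction --- using Proposition~\ref{l2r reorder follows all conditions} to pass from reordered matchings to corrected matchings $CC'$ with $C$ increasing --- is exactly the paper's first step; the paper then settles the sorted case by citing Theorem 4.4 of \cite{Briggs}, whereas you try to construct the inverse map $\Psi$ directly. Your construction, however, rests on a false premise: the row-by-row sign-pattern dictionary you use to define $\Psi$ (fixed pair $=$ untouched letter; $++$ pair with conjugate $--$ pair $=$ nonzero splitting; $+-$ pair $=$ zero splitting; $-+$ pair $=$ extension) describes the \emph{initial} matching of Definition~\ref{initial matching def}, not the \emph{corrected} matching. The correction algorithm re-pairs the entries of $B'$ against $B$, and after re-pairing the dictionary breaks. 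Take the paper's own type $B_8$ example $A=(5,0,\overline{8},\overline{5})$, with $\widehat{lA}=(4,7,\overline{8},\overline{5},\overline{2},\overline{1})$ and $\widehat{rA}$ having content $\{1,2,5,\overline{8},\overline{7},\overline{4}\}$. The initial matching pairs $4\leftrightarrow 5$, $7\leftrightarrow\overline{7}$, $\overline{8}\leftrightarrow\overline{8}$, $\overline{5}\leftrightarrow\overline{4}$, $\overline{2}\leftrightarrow 2$, $\overline{1}\leftrightarrow 1$, but running the correction algorithm (greedy circular minima) yields the rows
$$(4,5),\ (7,\overline{8}),\ (\overline{8},\overline{7}),\ (\overline{5},\overline{4}),\ (\overline{2},1),\ (\overline{1},2).$$
Here the untouched letter $\overline{8}$ sits in a $--$ pair $(\overline{8},\overline{7})$ with no conjugate $++$ partner, the zero splitting appears as a $+-$ pair $(7,\overline{8})$ whose entries are not negatives of each other, and the extension rows are not of the form $(\overline{a},a)$. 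Applying your $\Psi$ to this corrected matching returns the letters $\{5,0,\overline{5}\}$ and loses $\overline{8}$ entirely, so $\Psi$ is not an inverse; likewise your claim that Conditions 2 and 3 of Conditions~\ref{conditions SER} force the $++/--$ rows to occur in conjugate couples is false for corrected matchings (it is a property of initial matchings only).

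The gap is repairable --- one can read the splitting/extension data off the \emph{contents} of the two columns rather than their row pairing (a letter lying in both columns is untouched, a quadruple $t\in C$, $z\in C'$, $\overline{z}\in C$, $\overline{t}\in C'$ signals a split, and so on), or first undo the correction to recover the initial matching --- but that bookkeeping, together with the equivalence between Conditions 1--4 and the KN inequalities of Definition~\ref{def KN column} which you repeatedly defer (``I expect,'' ``I would show''), is precisely the content of the theorem of \cite{Briggs} that the paper invokes. As written, your proposal both misidentifies the combinatorial structure of corrected matchings and leaves the substantive equivalence unproved, so it does not constitute a proof.
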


\begin{proof}
Note that Proposition~\ref{l2r reorder follows all conditions} reduces the columns to be considered to just those $CC'$ sorted columns.  This case was shown in Theorem 4.4 of~\cite{Briggs}.
\end{proof}

\subsection{Building a segment of the QBG Path between Split, Extended columns}

\begin{lemma}\label{nec and suff of SER conditions}
The pair $DD'$ satisfies Conditions~\ref{conditions SER} if and only if there is a path $u=u_0,u_1,\hdots ,u_p=v$ in the corresponding quantum Bruhat graph such that $v[1,k]=C'$ and the edge labels form a subsequence of $\Gamma_r(k)$. Moreover, the mentioned path is unique, and for each $i= 1,\hdots,k$, we have $$C(i)=u_0(i)\preceq u_1(i)\preceq \hdots\preceq u_p(i) = C'(i).$$
\end{lemma}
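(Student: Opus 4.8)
The plan is to prove both implications of the equivalence, together with uniqueness and monotonicity, by paralleling the structure of the proof of Proposition~\ref{Total Path Prop} in Section~\ref{R2nextL}, while exploiting the special shape of $\Gamma_r(k)$. The decisive structural fact, which I would record first, is that every root in $\Gamma_r(k)=\Gamma^k\Gamma^{k-1}\dots\Gamma^2$ has the form $(i,\overline{\jmath})$ with $1\le j<i\le k$; that is, all of them are ``stage IV'' roots acting only on positions in $[1,k]$ and their negatives. Consequently, by the type $B_n$ criterion (Proposition~\ref{type B bruhat conditions}, part 2), each edge of such a path is either a \emph{sign-preserving upstep} (case 2a) or a \emph{downstep} (case 2b) that simultaneously turns two negative entries into positive ones. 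This dichotomy drives every subsequent step.

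For necessity, assume a path with edge labels a subsequence of $\Gamma_r(k)$ and $v[1,k]=C'$. Condition 1 is immediate: since every reflection keeps its two moving indices inside $[1,k]$, the multiset $\{|u_t(i)|:1\le i\le k\}$ is invariant along the path, so $\{|C(i)|\}=\{|C'(i)|\}$. The weakly increasing chain $C(i)=u_0(i)\preceq\dots\preceq u_p(i)=C'(i)$ follows from the row-monotonicity argument of Lemma~\ref{row monotonicity}, adapted to the fact that here row $i$ is altered only as the pivot of $\Gamma^i$ or as the lower index of some $\Gamma^{i'}$ with $i'>i$; in either case the criterion forbids transposing the entry in row $i$ past a value strictly between its current value and its target, which also yields Condition 2 ($int(C,C')=\emptyset$), exactly as in the ``no values between'' arguments of Section~\ref{R2nextL}. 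The sign analysis of the dichotomy gives Condition 3: no entry can change from $+$ to $-$, so same-sign pairs must increase, while every $-{+}$ pair is cleared by a case 2b downstep that eliminates two such pairs at once, forcing their number to be even. Finally, Condition 4 is exactly Conditions~\ref{nec conditions for B columns}, whose necessity is obtained by the blocked-off arguments of Proposition~\ref{Reorder Nec} and Lemma~\ref{nec condition 3}, now specialized to stage IV roots.

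For sufficiency, assume $DD'$ satisfies Conditions~\ref{conditions SER}. By Propositions~\ref{l2r reorder follows all conditions} and~\ref{Conditions are KN columns} the pair is a split, extended and reordered KN column pair, so it remains only to exhibit the path. I would construct it greedily, scanning $\Gamma_r(k)=\Gamma^k\Gamma^{k-1}\dots\Gamma^2$ in order and, within each block $\Gamma^i$, applying the root $(i,\overline{\jmath})$ precisely when it is a quantum Bruhat edge that keeps row $i$ weakly below $C'(i)$ in $\prec_{C(i)}$ and does not create a blocked-off configuration (the analogue of the procedure $\mathit{path}$-$B$ restricted to stage IV). Using the case 2a/2b dichotomy one checks that this construction never violates the criterion, that each affected row moves monotonically toward its target, and that after processing $\Gamma^2$ every row has reached $C'(i)$; this last point uses Conditions 1--3 to guarantee that the downsteps can be matched so that all $-{+}$ pairs are cleared. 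Uniqueness then follows because monotonicity together with the quantum Bruhat constraints forces each choice, in the spirit of Remark~\ref{uniqueness of path}.

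The main obstacle I anticipate is the coupling inherent in $\Gamma_r(k)$: unlike the $\Gamma_l(k)$ setting, where stages I--III move a row against positions outside $[1,k]$ and only stage IV touches two target rows, here \emph{every} reflection alters two rows of $C$ at once. The careful point is therefore to show that the greedy, row-by-row construction is globally consistent --- that resolving the $-{+}$ pairs and the sign-preserving increases in one row never forces a non-monotone move or a blocked-off configuration in the partner row --- so that the single sequence of roots simultaneously realizes the target value in all $k$ rows. This is where Conditions 2 and 3, the parity of $-{+}$ pairs, and the blocked-off analysis of Lemma~\ref{block off avoiding reorder step} must be combined.
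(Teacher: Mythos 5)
Your proposal is correct and takes essentially the same route as the paper's proof: necessity of Condition 4 via Proposition~\ref{Reorder Nec} and Lemma~\ref{nec condition 3}, Condition 1 from the restricted selection of roots in $\Gamma_r(k)$, Conditions 2 and 3 from the no-transposing-over part of the quantum Bruhat criterion and the stage IV sign restrictions, and sufficiency by running the greedy path algorithm block by block on $\Gamma_r(k)$, with the conditions guaranteeing correct termination and (via Lemma~\ref{no block off above ith row}) that blocked-off configurations never obstruct the iteration, which also yields uniqueness and row monotonicity. The only cosmetic differences are that the paper invokes the $Path\_C$ algorithm directly, arguing that under Conditions~\ref{conditions SER} blocked-off configurations never arise (in particular the unavailable root $(k,k+1)$ is never needed, by the parity part of Condition 3), whereas you build block-off avoidance into the construction, and your appeal to Propositions~\ref{l2r reorder follows all conditions} and~\ref{Conditions are KN columns} in the sufficiency direction is superfluous for this particular lemma.
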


\begin{proof}
The necessity of Condition 4 is given by Propositions~\ref{Reorder Nec} and~\ref{nec condition 3}. The necessity of Condition 1 is clear from the selection of roots available in $\Gamma_r(k)$.  The necessity of Conditions 2 and 3 follow from the quantum Bruhat criterion of type $B_n$.  Indeed Condition 2 prevents us from ever transposing over a value in positions $k+1,\hdots,n,\overline{n},\hdots, \overline{k+1}$ while Condition 3 follows from the sign restrictions of stage $IV$ moves.

\vspace{12pt} Now, if the four conditions are satisfied, then we obtain the desired path through the iteration of the $Path\_C$ algorithm (cf. Proposition~\ref{Total Path Prop} and its proof).  Indeed,  Condition 1 assures us that the algorithm terminates correctly even with the limited selection of transpositions in $\Gamma_r(k)$.  The monotonicity of same sign pairs from Condition 3 along with Condition 2 assures us that the algorithm never selections reflections of the form $(i,m)$ or $(i,\overline{m})$ for $m>k$. The pairity condition of $-+$ pairs from Condition 3 assures us that $DD'$ are not blocked off at at $k$ so that there is no need for the root $(k,k+1)$, which is unavailable in $\Gamma_r(k)$.  Condition 4 grants use of Lemma~\ref{no block off above ith row}, so every iteration starts off with two columns which are not blocked off.

\end{proof}

We can now prove Proposition~\ref{SER prop}.

\begin{proof}\textit{(of Proposition~\ref{SER prop} )}
This follows directly from Lemma~\ref{nec and suff of SER conditions} and Proposition~\ref{Conditions are KN columns}.
\end{proof}

\section{The bijection in type $D_n$}\label{D}

We briefly outline the major differences in the type $D_n$ constructions.  First, since KN columns of type $D_n$ have no relation in the ordering of $n$ and $\overline{n}$, the type $D_n$ splitting algorithm ``{\it split\_D}'' begins by converting all $(n,\overline{n})$ pairs in a given column to $0$ values, and then it continues as in type $B_n$ {\rm \cite{lecsbd}}.  There is still need for the extending algorithm, and we use the same one as in type $B_n$ (``{\it extend}''). 
The quantum Bruhat graph criterion in type $D_n$ differs from type $B_n$ in that we no longer have any arrows of the form $(i,\overline{\imath})$, but in return we have less restriction concerning arrows of the form  $(i,\overline{\jmath})$.  This change requires further modifications to the path and reordering algorithms, based on the following \textit{type $D_n$ blocked off} condition.

\begin{definition} We say that columns $C = (l_1,l_2,...,l_k)$ and $C' = (r_1,r_2,...,r_k)$ are {\rm type $D_n$ blocked off at $i$ by $b=r_i$} if and only if 
$C$ and $C'$ are blocked off at $i$ by $b=r_i$, or the following hold:
\begin{enumerate}
\item $ -|l_i| \leq b <0$, where $-|l_i| = b$ if and only if $l_i = \overline{b}$;
\item $\{b,b+1,\ldots,n\}\subset \{|l_1|,|l_2|,...,|l_i|\}$ and $\{b,b+1,\ldots,n\}\subset \{|r_1|,|r_2|,...,|r_i|\}$;
\item  and $|\{j : 1\geq j\geq i, l_j>0, r_j<0\}|$ is odd.
\end{enumerate}

We then define $``{Path\_D}''$ and $``{ord\_D}''$ to be as in type $B_n$, but by replacing ``\textit{blocked off}'' with ``\textit{type $D_n$ blocked off}''.
\end{definition}

\begin{theorem}
The map ``$\mbox{Path\_D}\circ \mbox{ord\_D}\circ \mbox{extend}\circ \mbox{split\_D}$'' is the inverse of the type $D_n$ ``sfill\_D'' map.
\end{theorem}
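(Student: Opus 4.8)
The plan is to mirror the entire type $B_n$ argument, replacing the notion of ``blocked off'' throughout by ``type $D_n$ blocked off,'' and to reduce the theorem to a type $D_n$ analogue of Proposition~\ref{main prop}. Concretely, I would first state and prove that the restriction of the type $D_n$ filling map to $\mathcal{A}(\lambda)$ is injective, with image equal to the set of fillings $C^1_l C^1_r\ldots C^{\lambda_1}_l C^{\lambda_1}_r$ of shape $2\lambda$ satisfying: (i) the first column is increasing; (ii) each pair $(ord\_D(C^j_l),ord\_D(C^j_r))$ is the split of a KN column $D^j$ of type $D_n$; and (iii) every pair of adjacent columns satisfies the type $D_n$ version of Conditions~\ref{nec conditions for B columns}, obtained by substituting ``type $D_n$ blocked off'' for ``blocked off.'' Granting this, the theorem follows exactly as in the proof of Theorem~\ref{main theorem}: the type $D_n$ analogue of Remark~\ref{main theorem remark} shows that $ord\_D$ constructs the unique reordering $\sigma$ of a given $b\in B^{\lambda'}$ with $sort(\sigma)=b$ satisfying (i)--(iii), and then $Path\_D$ recovers the unique folding mapped to $\sigma$.

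As in type $B_n$, this main proposition splits into two halves that are glued together by iterating over the columns $C^1_l, C^1_r, C^2_l, C^2_r, \ldots$ precisely as in the proof of Proposition~\ref{main prop}: a right-to-next-left step, the analogue of Proposition~\ref{Total Path Prop} using $\Gamma_l(k)$, and a left-to-right step, the analogue of Proposition~\ref{SER prop} using $\Gamma_r(k)$. Each half asserts that a pair of columns satisfies the type $D_n$ conditions if and only if there is a unique monotone path in the type $D_n$ quantum Bruhat graph whose edge labels form a subsequence of $\Gamma_l(k)$ (respectively $\Gamma_r(k)$).

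The substantive work is to re-prove the supporting lemmas of Sections~\ref{R2nextL} and~\ref{L2R} under the type $D_n$ quantum Bruhat criterion. The key structural change is the disappearance of the stage II roots $(i,\overline{\imath})$: every sign change must now be effected through a stage IV root $(i,\overline{\jmath})$, which flips the signs in \emph{two} positions at once. This forces a symmetric treatment of the blocked-off obstruction, since besides the positive-$b$ pattern of Definition~\ref{block-off def} one must handle the new negative-$b$ pattern, in which $\{b,b+1,\ldots,n\}\subseteq\{|l_j|\}$ and the relevant parity is odd. Accordingly, the monotonicity statement of Lemma~\ref{row monotonicity} and its ``passes the target'' analysis must be extended so that the target can be passed not only at $(k,k+1)$ (when the positive-$b$ block-off holds) but also at the symmetric root governing negative targets (when the negative-$b$ block-off holds); the reorder statement Lemma~\ref{Lenart reorder lemma} and the parity argument of Lemma~\ref{no path for block off} must be dualized in the same manner, and the sign-counting in Lemma~\ref{pos order} and Proposition~\ref{Reorder Nec} adapted so that the now-unavoidable two-sign stage IV moves are tracked correctly. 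Because type $D_n$ has no short roots, the special case $3b$ of Proposition~\ref{type B bruhat conditions} is simply absent, which removes some casework but is exactly the source of the two-signs-at-once behavior.

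For the left-to-right half I would re-run the classification of split, extended, reordered KN columns, namely Conditions~\ref{conditions SER} together with Lemmas~\ref{initial match has 1,2,3} through~\ref{block off avoiding reorder step} and Proposition~\ref{Conditions are KN columns}, using $split\_D$ in place of $split\_B$. The initial matching of Definition~\ref{initial matching def} acquires one extra case, since $split\_D$ first converts each $(n,\overline{n})$ pair to a $0$ before proceeding as in type $B_n$; the parity-preservation lemmas carry over because stage IV still changes exactly two signs per application. The main obstacle I expect is the dualized monotonicity and parity analysis for the \emph{negative-}$b$ blocked-off pattern, that is, proving the analogues of Lemma~\ref{row monotonicity}, Lemma~\ref{Lenart reorder lemma}, and Lemma~\ref{no path for block off} when both the target value and the obstruction are negative and all sign changes are forced through two-position stage IV roots; this is where the absence of $(i,\overline{\imath})$ departs most sharply from type $B_n$ and where the genuinely new content of the type $D_n$ blocked-off definition is needed.
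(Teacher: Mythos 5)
Your proposal is correct and takes essentially the same approach as the paper: the paper in fact gives no proof of this theorem at all---it only defines the type $D_n$ blocked-off condition and declares $\mbox{Path\_D}$ and $\mbox{ord\_D}$ to be the type $B_n$ algorithms with that substitution---so the intended argument is exactly the mirror-the-type-$B_n$ reduction you outline. Your identification of the genuinely new content (the negative-$b$ block-off pattern, and the fact that with no stage II roots $(i,\overline{\imath})$ every sign change is forced through a two-position stage IV root) is consistent with, and considerably more explicit than, the paper's own sketch.
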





\end{document}